\date{\today}
\newtheorem{theorem}{Theorem}[section]
\newtheorem{proposition}[theorem]{Proposition}
\newtheorem{corollary}[theorem]{Corollary}
\newtheorem{lemma}[theorem]{Lemma}
\theoremstyle{definition}
\newtheorem{definition}[theorem]{Definition}
\newtheorem{construction}[theorem]{Construction}
\newtheorem{example}[theorem]{Example}
\newtheorem{remark}[theorem]{Remark}
\begin{document}

\title[Extensions of semigroups by symmetric inverse semigroups of a~bounded finite rank]{Extensions of semigroups by symmetric inverse semigroups of a~bounded finite rank}
\author{Oleg~Gutik and Oleksandra Sobol}
\address{Faculty of Mechanics and Mathematics, Ivan Franko National University of Lviv, Universytetska 1, Lviv, 79000, Ukraine}
\email{oleg.gutik@lnu.edu.ua, ovgutik@yahoo.com, o.yu.sobol@gmail.com}

\keywords{Inverse semigroup, symmetric inverse semigroup of finite transformations, Green's relations, semigroup has a tight ideal series, semitopologica; semigroup, compact semigroup
}

\subjclass[2010]{20M20, 20M18, 22A15, 54D30, 54H10}

\begin{abstract}
We study the semigroup extension $\mathscr{I}_\lambda^n(S)$ of a semigroup $S$ by symmetric inverse semigroups of a bounded finite rank.  We describe  idempotents and regular elements of the semigroups $\mathscr{I}_\lambda^n(S)$ and $\overline{\mathscr{I}_\lambda^n}(S)$ show that the semigroup $\mathscr{I}_\lambda^n(S)$ ($\overline{\mathscr{I}_\lambda^n}(S)$) is regular, orthodox, inverse or stable if and only if so is $S$. Green's relations are described on the semigroup $\mathscr{I}_\lambda^n(S)$ for an arbitrary monoid $S$. We introduce the conception of a semigroup with strongly tight ideal series, and  proved that for any infinite cardinal $\lambda$ and any positive integer $n$ the semigroup $\mathscr{I}_\lambda^n(S)$ has a  strongly tight ideal series provides so has $S$. At the finish we show that for every compact Hausdorff semitopological monoid $(S,\tau_S)$ there exists a unique its compact topological extension $\left(\mathscr{I}_\lambda^n(S),\tau_{\mathscr{I}}^\mathbf{c}\right)$ in the class of Haudorff semitopological semigroups.
\end{abstract}

\maketitle


\section{Introduction, motivation and main definitions}

In this paper we shall follow the terminology of~\cite{CP, Petrich1984}.

If $S$ is a~semigroup, then by $E(S)$ we denote the
subset of all idempotents of $S$. On  the set of idempotents
$E(S)$ there exists the natural partial order: $e\leqslant f$
\emph{if and only if} $ef=fe=e$.

A semigroup $S$ is called:
\begin{itemize}
  \item \emph{regular}, if for every $a\in S$ there exists an element $b$ in $S$ such that $a=aba$;
  \item \emph{orthodox}, if  $S$ is regular and $E(S)$ is a subsemigroup of $S$;
  \item \emph{inverse} if every $a$ in $S$ possesses an unique inverse, i.e. if there exists an unique element $a^{-1}$ in $S$ such that
  \begin{equation*}
    aa^{-1}a=a \qquad \mbox{and} \qquad a^{-1}aa^{-1}=a^{-1}.
\end{equation*}
\end{itemize}
It is obvious that every inverse semigroup is orthodox and every orthodox semigroup is regular.
A map which associates to any element of an inverse semigroup its
inverse is called the \emph{inversion}.

Let $\lambda$ be an arbitrary non-zero cardinal. A map $\alpha$ from a subset $D$ of $\lambda$ into $\lambda$ is called a \emph{partial transformation} of $X$. In this case
the set $D$ is called the \emph{domain} of $\alpha$ and is denoted
by $\operatorname{dom}\alpha$. Also, the set $\{ x\in X\colon
y\alpha=x \mbox{ for some } y\in Y\}$ is called the \emph{range}
of $\alpha$ and is denoted by $\operatorname{ran}\alpha$. The
cardinality of $\operatorname{ran}\alpha$ is called the
\emph{rank} of $\alpha$ and denoted by
$\operatorname{rank}\alpha$. For convenience we denote by
$\varnothing$ the empty transformation, that is a partial mapping
with
$\operatorname{dom}\varnothing=\operatorname{ran}\varnothing=\varnothing$.

Let $\mathscr{I}_\lambda$ denote the set of all partial one-to-one transformations of $\lambda$ together with the following semigroup operation:
\begin{equation*}
    x(\alpha\beta)=(x\alpha)\beta \quad \mbox{if} \quad
    x\in\operatorname{dom}(\alpha\beta)=\{
    y\in\operatorname{dom}\alpha\colon
    y\alpha\in\operatorname{dom}\beta\}, \qquad \mbox{for} \quad
    \alpha,\beta\in\mathscr{I}_\lambda.
\end{equation*}
The semigroup $\mathscr{I}_\lambda$ is called the \emph{symmetric
inverse semigroup} over the cardinal $\lambda$~(see \cite{CP}). The symmetric
inverse semigroup was introduced by V.~V.~Wagner~\cite{Wagner1952}
and it plays a major role in the theory of semigroups.

Put
\begin{equation*}
\mathscr{I}_\lambda^\infty=\{ \alpha\in\mathscr{I}_\lambda\colon
\operatorname{rank}\alpha \;\mbox{ is finite}\}  \quad \mbox{ and }
\quad \mathscr{I}_\lambda^n=\{ \alpha\in\mathscr{I}_\lambda\colon
\operatorname{rank}\alpha\leqslant n\},
\end{equation*}
for $n=1,2,3,\ldots$. Obviously, $\mathscr{I}_\lambda^\infty$ and
$\mathscr{I}_\lambda^n$ ($n=1,2,3,\ldots$) are inverse semigroups,
$\mathscr{I}_\lambda^\infty$ is an ideal of $\mathscr{I}_\lambda$, and
$\mathscr{I}_\lambda^n$ is an ideal of
$\mathscr{I}_\lambda^\infty$, for each $n=1,2,3,\ldots$. Further,
we shall call the semigroup $\mathscr{I}_\lambda^\infty$ the
\emph{symmetric inverse semigroup of finite transformations} and
$\mathscr{I}_\lambda^n$ the \emph{symmetric inverse semigroup of
finite transformations of the rank $\leqslant n$}. The elements of
semigroups $\mathscr{I}_\lambda^\infty$ and
$\mathscr{I}_\lambda^n$ are called \emph{finite one-to-one
transformations} (\emph{partial bijections}) of the cardinal $\lambda$. By
\begin{equation*}
\left(%
\begin{array}{ccc}
  x_1 & \cdots & x_n \\
  y_1 & \cdots & y_n \\
\end{array}%
\right)
\end{equation*}
we denote a partial one-to-one transformation which maps $x_1$ onto $y_1$, $\ldots$, $x_n$ onto $y_n$, and by $0$ the empty transformation. Obviously, in such case we have $x_i\neq x_j$ and $y_i\neq y_j$ for $i\neq j$ ($i,j=1,\ldots,n$). The empty partial map $\varnothing\colon \lambda\rightharpoonup\lambda$ we denote by $0$. It is obvious that $0$ is zero of the semigroup $\mathscr{I}_\lambda^n$.

Let $\lambda$ be a non-zero cardinal. On the set
 $
 B_{\lambda}=(\lambda\times\lambda)\cup\{ 0\}
 $,
where $0\notin\lambda\times\lambda$, we define the semigroup
operation ``$\, \cdot\, $'' as follows
\begin{equation*}
(a, b)\cdot(c, d)=
\left\{
  \begin{array}{cl}
    (a, d), & \hbox{ if~ } b=c;\\
    0, & \hbox{ if~ } b\neq c,
  \end{array}
\right.
\end{equation*}
and $(a, b)\cdot 0=0\cdot(a, b)=0\cdot 0=0$ for $a,b,c,d\in
\lambda$. The semigroup $B_{\lambda}$ is called the
\emph{semigroup of $\lambda\times\lambda$-matrix units}~(see
\cite{CP}). Obviously, for any cardinal $\lambda>0$, the semigroup
of $\lambda\times\lambda$-matrix units $B_{\lambda}$ is isomorphic
to $\mathscr{I}_\lambda^1$.

Let $S$ be a semigroup with zero and $\lambda$ be a non-zero cardinal. We define the semigroup operation on the set $B_{\lambda}(S)=(\lambda\times S\times
\lambda)\cup\{ 0\}$ as follows:
\begin{equation*}
 (\alpha,a,\beta)\cdot(\gamma, b, \delta)=
\left\{
  \begin{array}{cl}
    (\alpha, ab, \delta), & \hbox{ if } \beta=\gamma;\\
    0, & \hbox{ if } \beta\ne \gamma,
  \end{array}
\right.
\end{equation*}
and $(\alpha, a, \beta)\cdot 0=0\cdot(\alpha, a, \beta)=0\cdot 0=0$, for all $\alpha, \beta, \gamma, \delta\in \lambda$ and $a, b\in S$. If $S=S^1$ then the semigroup $B_\lambda(S)$ is called
the {\it Brandt $\lambda$-extension of the semigroup}
$S$~\cite{Gutik1999, GutikPavlyk2001}. Obviously, if $S$ has zero
then ${\mathcal J}=\{ 0\}\cup\{(\alpha, 0_S, \beta)\colon 0_S$ is
the zero of $S\}$ is an ideal of $B_\lambda(S)$. We put
$B^0_\lambda(S)=B_\lambda(S)/{\mathcal J}$ and the semigroup
$B^0_\lambda(S)$ is called the {\it Brandt $\lambda^0$-extension
of the semigroup $S$ with zero}~\cite{GutikPavlyk2006}.

A {\it topological} ({\it inverse}) {\it semigroup} is a Hausdorff topological space together with a continuous semigroup operation (and an~inversion, respectively). A {\it semitopological semigroup} is a Hausdorff topological space together with a separately continuous semigroup operation.

The Brandt $\lambda$-extension $B_{\lambda}(S)$ (or the Brandt $\lambda^0$-extension $B^0_{\lambda}(S)$) of a semigroup $S$ we may to consider as a some semigroup extension of the semigroup $S$ by the semigroup $\lambda\times\lambda$-matrix units $B_\lambda$. An analogue of so extension gives the following definition.

\section{The construction of of the semigroup extension $\mathscr{I}_\lambda^n(S)$}

In this paper using the semigroup $\mathscr{I}_\lambda^n$ we propose the following semigroup extension.

\begin{construction}\label{construction-1.1}
Let $S$ be a semigroup, $\lambda$ be a non-zero cardinal, $n$ and $k$ be a positive integers such that  $k\leqslant n\leqslant\lambda$. We identify every element $\alpha\in \mathscr{I}_\lambda^n$ with its graph $\operatorname{\textsf{Gr}}(\alpha)\subset\lambda\times\lambda$ and put
\begin{equation*}
    \mathscr{I}_\lambda^n(S)=\left\{\alpha_S\colon\operatorname{\textsf{Gr}}(\alpha) \rightarrow S \colon \alpha\in\mathscr{I}_\lambda^n\right\}
\end{equation*}
and every map from the empty map $0$ into $S$ we identify with the empty map $\varnothing\colon \lambda\times \lambda\rightharpoonup S$ and denote its by $0$. An arbitrary element  $0\neq\operatorname{rank}\alpha \leqslant n$ we denote by
\begin{equation*}
{\small{\left(%
\begin{array}{ccc}
  x_1 & \cdots & x_k \\
  s_1 & \cdots & s_k \\
  y_1 & \cdots & y_k
\end{array}%
\right)}},
\end{equation*}
where
$\alpha=
\left(%
\begin{array}{ccc}
  x_1 & \cdots & x_k \\
  y_1 & \cdots & y_k
\end{array}%
\right)$, and $\left((x_1,y_1)\right)\alpha=s_1$,  $\ldots,$ $\left((x_k,y_k)\right)\alpha=s_k$. Also for $\alpha_S\in\mathscr{I}_\lambda^n(S)$ such that
\begin{equation*}
\alpha_S=
{\small{\left(%
\begin{array}{ccc}
  x_1 & \cdots & x_k \\
  s_1 & \cdots & s_k \\
  y_1 & \cdots & y_k
\end{array}%
\right)}}
\end{equation*}
we denote $\mathop{\textsf{\textbf{d}}}(\alpha_S)=\{x_1,\ldots,x_k\}$ and $\mathop{\textsf{\textbf{r}}}(\alpha_S)=\{y_1,\ldots,y_k\}$.

Now, we define a binary operation ``$\cdot$'' on the set $\mathscr{I}_\lambda^n(S)$ in the following way:
\begin{itemize}
  \item[$(i)$] $\alpha_S\cdot 0=0\cdot\alpha_S=0\cdot 0=0$ for every  $\alpha_S\in\mathscr{I}_\lambda^n(S)$;
  \item[$(ii)$] if $\alpha\cdot \beta=0$ in $\mathscr{I}_\lambda^n$ then $\alpha_S\cdot \beta_S=0$ for any $\alpha_S,\beta_S \in \mathscr{I}_\lambda^n(S)$;
  \item[$(iii)$] if $\alpha_S=
  {\small{\left(%
  \begin{array}{ccc}
   a_1 & \cdots & a_i \\
   s_1 & \cdots & s_i \\
   b_1 & \cdots & b_i
  \end{array}%
  \right)}}
  $, $\beta_S=
  {\small{\left(%
  \begin{array}{ccc}
   c_1 & \cdots & c_j \\
   t_1 & \cdots & t_j \\
   d_1 & \cdots & d_j
  \end{array}%
  \right)}}
  $ and
  \begin{equation*}
  \alpha\cdot \beta=\left(%
  \begin{array}{ccc}
   a_1 & \cdots & a_i \\
   b_1 & \cdots & b_i
  \end{array}%
  \right)
  \cdot \left(%
  \begin{array}{ccc}
   c_1 & \cdots & c_j \\
   d_1 & \cdots & d_j
  \end{array}%
  \right)=
  \left(%
  \begin{array}{ccc}
   a_{i_1} & \cdots & a_{i_m} \\
   d_{j_1} & \cdots & d_{j_m}
  \end{array}%
  \right)\neq 0 \quad \hbox{~in~} \mathscr{I}_\lambda^n,
  \end{equation*}
  then $\alpha_S\cdot \beta_S={\small{\left(%
  \begin{array}{cccc}
   a_{i_1}        & \cdots & a_{i_m} \\
   s_{i_1}t_{j_1} & \cdots & s_{i_m}t_{j_m} \\
   d_{j_1}        & \cdots & d_{j_m}
  \end{array}%
  \right)}}$.
  \end{itemize}

Simple verifications show that so defines binary operation on $\mathscr{I}_\lambda^n(S)$ is associative and hence $\mathscr{I}_\lambda^n(S)$ is a semigroup. It is obvious that $\mathscr{I}_\lambda^1(S)$ is isomorphic to the Brandt $\lambda$-extension $B_\lambda(S)$ of the semigroup $S$.
\end{construction}

We remark that if the semigroup $S$ contains a zero $0_S$ then
\begin{equation*}
{\mathcal J}_{0}=\{ 0\}\cup\left\{\alpha_S=
  {\small{\left(%
  \begin{array}{ccc}
   a_1 & \cdots & a_i \\
   0_S & \cdots & 0_S \\
   b_1 & \cdots & b_i
  \end{array}%
  \right)}}\colon 0_S \hbox{ is
the zero of } S\right\}
\end{equation*}
is an ideal of $\mathscr{I}_\lambda^n(S)$.

Also, we define a binary relation $\equiv_0$ on the semigroup $\mathscr{I}_\lambda^n(S)$ in the following way. For $\alpha_S,\beta_S\in\mathscr{I}_\lambda^n(S)$ we put $\alpha_S\equiv_0\beta_S$ if and only if at least one of the following conditions holds:
\begin{itemize}
  \item[$(1)$] $\alpha_S=\beta_S$;
  \item[$(2)$] $\alpha_S,\beta_S\in{\mathcal J}_{0}$;
  \item[$(3)$] $\alpha_S,\beta_S\notin{\mathcal J}_{0}$ and each of conditions
  \begin{itemize}
    \item[$(i)$] $(x,y)\alpha_S$ is determined and $(x,y)\alpha_S\neq 0_S$; \; and
    \item[$(ii)$] $(x,y)\beta_S$ is determined and $(x,y)\beta_S\neq 0_S$
  \end{itemize}
    implies the equality $(x,y)\alpha_S=(x,y)\beta_S$.
\end{itemize}
It is obvious that $\equiv_0$ is an equivalence relation on the semigroup $\mathscr{I}_\lambda^n(S)$.

The following proposition prove by usual verifications.

\begin{proposition}\label{proposition-1.1}
The relation $\equiv_0$ is a congruence on the semigroup $\mathscr{I}_\lambda^n(S)$.
\end{proposition}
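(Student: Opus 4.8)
The plan is to prove that $\equiv_0$ is compatible with the multiplication of $\mathscr{I}_\lambda^n(S)$ from both sides; together with the already recorded fact that $\equiv_0$ is an equivalence relation this gives the proposition. It is convenient to reduce to one side. Let $\Phi$ be the map of $\mathscr{I}_\lambda^n(S)$ into $\mathscr{I}_\lambda^n(S^{\mathrm{op}})$ which fixes $0$ and sends a non-zero element to the one obtained by interchanging its top row of domain indices with its bottom row of range indices and keeping the middle row of values. A direct check using part~$(iii)$ of Construction~\ref{construction-1.1} shows that $\Phi$ is an anti-isomorphism; moreover it carries $\mathcal{J}_0$ onto $\mathcal{J}_0$ and the relation $\equiv_0$ onto $\equiv_0$. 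Hence it suffices to establish \emph{left} compatibility — if $\alpha_S\equiv_0\beta_S$ then $\gamma_S\alpha_S\equiv_0\gamma_S\beta_S$ for every $\gamma_S$ — for an arbitrary semigroup $S$, and then apply it to $S^{\mathrm{op}}$ to deduce right compatibility. (Right compatibility may equally well be checked directly by the mirror-image argument.)

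The main tool is a description of the non-zero entries of a product. Call a pair $(x,y)$ \emph{active} for $\mu_S\in\mathscr{I}_\lambda^n(S)$ if $(x,y)\mu_S$ is determined and $(x,y)\mu_S\ne 0_S$. Unwinding part~$(iii)$ of Construction~\ref{construction-1.1} and using that $0_S$ is an absorbing element of $S$, one obtains: $(x,y)$ is active for $\mu_S\nu_S$ if and only if there is a $z\in\lambda$ with $(x,z)$ active for $\mu_S$, $(z,y)$ active for $\nu_S$, and $(x,z)\mu_S\cdot(z,y)\nu_S\ne 0_S$; such a $z$ is then unique (namely $z=x\mu$, since $\mu$ is a partial bijection) and $(x,y)(\mu_S\nu_S)=(x,z)\mu_S\cdot(z,y)\nu_S$. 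The role of absorption is precisely that an entry equal to $0_S$ in one of the factors can never occur in an active pair of the product. A companion observation, proved the same way, is that $\mu_S\in\mathcal{J}_0$ (equivalently, $\mu_S$ has no active pair) forces $\mu_S\nu_S\in\mathcal{J}_0$ for every $\nu_S$.

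Now fix $\gamma_S$ and suppose $\alpha_S\equiv_0\beta_S$. If $\alpha_S=\beta_S$, there is nothing to do. If $\alpha_S,\beta_S\in\mathcal{J}_0$, then $\gamma_S\alpha_S,\gamma_S\beta_S\in\mathcal{J}_0$ by the companion observation, so $\gamma_S\alpha_S\equiv_0\gamma_S\beta_S$ by condition~$(2)$ in the definition of $\equiv_0$. In the remaining case $\alpha_S,\beta_S\notin\mathcal{J}_0$ and condition~$(3)$ holds, so $\alpha_S$ and $\beta_S$ have the same set $T$ of active pairs and $(z,y)\alpha_S=(z,y)\beta_S$ for every $(z,y)\in T$. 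If $(x,y)$ is active for $\gamma_S\alpha_S$, with witness $z$, then $(z,y)$ is active for $\alpha_S$, hence $(z,y)\in T$, hence $(z,y)$ is active for $\beta_S$ with $(z,y)\beta_S=(z,y)\alpha_S$; so the same $z$ witnesses that $(x,y)$ is active for $\gamma_S\beta_S$, with the identical value $(x,z)\gamma_S\cdot(z,y)\alpha_S$. By the symmetric implication, $\gamma_S\alpha_S$ and $\gamma_S\beta_S$ have the same active pairs and agree on them. Thus either both lie in $\mathcal{J}_0$, giving $\gamma_S\alpha_S\equiv_0\gamma_S\beta_S$ by condition~$(2)$, or both lie outside $\mathcal{J}_0$, and then condition~$(3)$ gives $\gamma_S\alpha_S\equiv_0\gamma_S\beta_S$.

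The only genuinely delicate point is the last step: one must be sure that the intermediate index $z$ found on the $\alpha_S$-side is admissible on the $\beta_S$-side, which is exactly the assertion that $(z,y)$ is an active pair of $\beta_S$ (together with the uniqueness $z=x\gamma$), and that $\equiv_0$ cannot be broken by a $0_S$-entry of $\alpha_S$ or $\beta_S$ — one that $\equiv_0$ has ``forgotten'' — migrating into a non-zero entry of the product; this is ruled out by $0_S$ being absorbing. Everything else is the routine case-by-case checking the authors describe as ``usual''.
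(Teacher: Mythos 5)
Your proof is correct. The paper itself offers no argument beyond the remark that the proposition is proved ``by usual verifications,'' so there is nothing to compare line by line; what you have written is that verification, carried out carefully and organized well. Two points are worth flagging. First, your argument rests on reading condition $(3)$ in the definition of $\equiv_0$ as saying that $\alpha_S$ and $\beta_S$ have the \emph{same} set of pairs $(x,y)$ at which the value is determined and different from $0_S$, and agree there; this is indeed the only reading under which $\equiv_0$ is transitive (under the weaker reading ``agree wherever both are active,'' the relation fails to be an equivalence), so your choice is forced and correct, but it deserves the explicit justification you give it. Second, the reduction of right compatibility to left compatibility via the anti-isomorphism onto $\mathscr{I}_\lambda^n(S^{\mathrm{op}})$ is a genuine economy over the mirror-image case check, and the ``active pair'' description of the product --- with the observation that $0_S$ being absorbing prevents a forgotten $0_S$-entry from producing a non-$0_S$ entry of the product --- isolates exactly the point where the congruence property could fail. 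No gaps.
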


We define $\overline{\mathscr{I}_\lambda^n}(S)= \mathscr{I}_\lambda^n(S)/_{\equiv_0}$.

In this paper we study algebraic properties of the semigroups $\mathscr{I}_\lambda^n(S)$ and $\overline{\mathscr{I}_\lambda^n}(S)$. We describe  idempotents and regular elements of the semigroups $\mathscr{I}_\lambda^n(S)$ and $\overline{\mathscr{I}_\lambda^n}(S)$ show that the semigroup $\mathscr{I}_\lambda^n(S)$ ($\overline{\mathscr{I}_\lambda^n}(S)$) is regular, orthodox, inverse or stable if and only if so is $S$. Green's relations are described in the semigroup $\mathscr{I}_\lambda^n(S)$ for an arbitrary monoid $S$. We introduce the conception of a semigroup with strongly tight ideal series, and  proved that for any infinite cardinal $\lambda$ and any positive integer $n$ the semigroup $\mathscr{I}_\lambda^n(S)$ has a  strongly tight ideal series provides so has $S$. At the finish we show that for every compact Hausdorff semitopological monoid $(S,\tau_S)$ there exists a unique its compact topological extension $\left(\mathscr{I}_\lambda^n(S),\tau_{\mathscr{I}}^\mathbf{c}\right)$ in the class of Haudorff semitopological semigroups.


\section{Algebraic properties of the semigroup extensions $\mathscr{I}_\lambda^n(S)$ and $\overline{\mathscr{I}_\lambda^n}(S)$}

The following proposition describes the subset of idempotents of the semigroup $\mathscr{I}_\lambda^n(S)$.

\begin{proposition}\label{proposition-2.1}
For every positive integer $i\leqslant n$ a non-zero element
$\alpha_S=
  {\small{\left(%
  \begin{array}{ccc}
   a_1 & \cdots & a_i \\
   s_1 & \cdots & s_i \\
   b_1 & \cdots & b_i
  \end{array}%
  \right)}}
  $
of the semigroup $\mathscr{I}_\lambda^n(S)$ is an idempotent if and only if $a_1=b_1, \ldots, a_i=b_i$ and $s_1,\ldots,s_i\in E(S)$.
\end{proposition}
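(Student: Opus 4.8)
The plan is to compute $\alpha_S\cdot\alpha_S$ directly from clause $(iii)$ of Construction~\ref{construction-1.1} and compare it with $\alpha_S$. First I would note that if $\alpha_S$ is an idempotent then in particular $\alpha_S\neq 0$ forces $\alpha\cdot\alpha\neq 0$ in $\mathscr{I}_\lambda^n$; since $\alpha=\left(\begin{smallmatrix}a_1&\cdots&a_i\\ b_1&\cdots&b_i\end{smallmatrix}\right)$ is a partial bijection, $\alpha\cdot\alpha=\alpha$ holds in $\mathscr{I}_\lambda^n$ if and only if $\alpha$ is an idempotent of $\mathscr{I}_\lambda^n$, which is well known to be equivalent to $\{a_1,\dots,a_i\}=\{b_1,\dots,b_i\}$ with $a_t\alpha=a_t$; after reindexing this is exactly $a_1=b_1,\dots,a_i=b_i$. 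So the ``domain/range'' part of the statement is just the description of idempotents in the symmetric inverse semigroup restricted to rank $\leqslant n$.

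Next, assuming $a_1=b_1,\dots,a_i=b_i$, the product $\alpha\cdot\alpha$ in $\mathscr{I}_\lambda^n$ equals $\alpha$ and every index survives (in the notation of $(iii)$, $m=i$ and $i_t=j_t=t$), so clause $(iii)$ gives
\begin{equation*}
\alpha_S\cdot\alpha_S=
{\small{\left(
\begin{array}{ccc}
 a_1 & \cdots & a_i \\
 s_1s_1 & \cdots & s_is_i \\
 b_1 & \cdots & b_i
\end{array}
\right)}}.
\end{equation*}
Hence $\alpha_S\cdot\alpha_S=\alpha_S$ if and only if $s_ts_t=s_t$ for every $t=1,\dots,i$, i.e. $s_1,\dots,s_i\in E(S)$. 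This proves both implications: if $\alpha_S$ is idempotent then first the graph condition holds (so that the product is nonzero and has the same domain/range), and then the displayed computation forces each $s_t\in E(S)$; conversely, if both conditions hold the same computation shows $\alpha_S\cdot\alpha_S=\alpha_S$.

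The only point requiring a little care — the ``main obstacle'', though it is minor — is bookkeeping with the index sets in clause $(iii)$: one must check that when $a_1=b_1,\dots,a_i=b_i$ no columns are lost in forming $\alpha\cdot\alpha$, so that the entries $s_ts_t$ appear in the correct positions and no extra cancellation occurs. This is immediate because $\operatorname{ran}\alpha=\mathop{\textsf{\textbf{r}}}(\alpha_S)=\{b_1,\dots,b_i\}=\{a_1,\dots,a_i\}=\mathop{\textsf{\textbf{d}}}(\alpha_S)=\operatorname{dom}\alpha$, so $\operatorname{dom}(\alpha\cdot\alpha)=\operatorname{dom}\alpha$. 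One should also observe that the case $\alpha_S=0$ is excluded by hypothesis, and that $i$ ranges over $1\leqslant i\leqslant n$ covers every nonzero element, so the statement is exhaustive.
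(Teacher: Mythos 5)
Your argument is correct and follows essentially the same route as the paper: reduce the domain/range condition to the underlying partial bijection $\alpha$ being an idempotent of $\mathscr{I}_\lambda^n$ (hence a partial identity, so $a_t=b_t$ for each $t$), then read off $s_ts_t=s_t$ from the displayed computation of $\alpha_S\cdot\alpha_S$. The only differences are cosmetic: the paper derives $a_1=b_1,\ldots,a_i=b_i$ by a direct contradiction argument rather than citing the known form of idempotents of the symmetric inverse semigroup, and you should state explicitly that $\alpha_S\cdot\alpha_S=\alpha_S$ forces $\alpha\cdot\alpha=\alpha$ (not merely $\alpha\cdot\alpha\neq 0$), which is immediate since by clause $(iii)$ the underlying transformation of a nonzero product is the product of the underlying transformations.
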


\begin{proof}
The implication $(\Leftarrow)$ is trivial.

$(\Rightarrow)$ Suppose that $\alpha_S\cdot \alpha_S=\alpha_S$. Then the definition of the semigroup $\mathscr{I}_\lambda^n(S)$ implies that the symbols $a_1,\ldots, a_i$ are distinct. Similar we get that the symbols $b_1,\ldots, b_i$ are distinct, too. The above arguments and the equality $\alpha_S\cdot \alpha_S=\alpha_S$ imply that $\left\{a_1,\ldots, a_i\right\}=\left\{b_1,\ldots, b_i\right\}$. Assume that $a_k\neq b_k=a_l$ for some integers $k,l\in\{1,\ldots,i\}$, $k\neq l$. Then we have that $a_l\neq b_l\neq b_k$, which contradicts the equality $\alpha_S\cdot \alpha_S=\alpha_S$. The obtained contradiction implies the equalities $a_1=b_1, \ldots, a_i=b_i$. Now, we get that
\begin{equation*}
\alpha_S\cdot \alpha_S= {\small{\left(\!\!
  \begin{array}{ccc}
   a_1 & \cdots & a_i \\
   s_1 & \cdots & s_i \\
   a_1 & \cdots & a_i
  \end{array}\!\!
  \right)}}
{\cdot}
{\small{\left(\!\!
  \begin{array}{ccc}
   a_1 & \cdots & a_i \\
   s_1 & \cdots & s_i \\
   a_1 & \cdots & a_i
  \end{array}\!\!
  \right)}}
={\small{\left(\!\!
  \begin{array}{ccc}
   a_1    & \cdots & a_i    \\
   s_1s_1 & \cdots & s_is_i \\
   a_1    & \cdots & a_i
  \end{array}\!\!
  \right)}}
= {\small{\left(\!\!
  \begin{array}{ccc}
   a_1 & \cdots & a_i \\
   s_1 & \cdots & s_i \\
   a_1 & \cdots & a_i
  \end{array}\!\!
  \right)}}=
\alpha_S,
\end{equation*}
and hence $s_1s_1=s_1, \ldots, s_is_i=s_i$. This completes the proof if the proposition.
\end{proof}


\begin{proposition}\label{proposition-2.2}
For every positive integer $i\leqslant n$ a non-zero element
$\alpha_S=
  {\small{\left(%
  \begin{array}{ccc}
   a_1 & \cdots & a_i \\
   s_1 & \cdots & s_i \\
   b_1 & \cdots & b_i
  \end{array}%
  \right)}}
  $
of the semigroup $\mathscr{I}_\lambda^n(S)$ is regular if and only if so are  $s_1,\ldots,s_i$ in $S$.
\end{proposition}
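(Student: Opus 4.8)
The plan is to exploit the block-diagonal nature of the multiplication in Construction~\ref{construction-1.1} exactly as in the proof of Proposition~\ref{proposition-2.1}: a candidate inverse of $\alpha_S$ should have domain $\{b_1,\ldots,b_i\}$ and range $\{a_1,\ldots,a_i\}$, so that the products $\alpha_S\cdot\beta_S\cdot\alpha_S$ never collapse to $0$, and the $S$-entries multiply coordinatewise.

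For the implication $(\Leftarrow)$, assume each $s_\ell$ is regular in $S$, so pick $t_\ell\in S$ with $s_\ell t_\ell s_\ell=s_\ell$ for $\ell=1,\ldots,i$. Put
\begin{equation*}
\beta_S=
{\small{\left(
\begin{array}{ccc}
 b_1 & \cdots & b_i \\
 t_1 & \cdots & t_i \\
 a_1 & \cdots & a_i
\end{array}
\right)}}.
\end{equation*}
Since $\mathop{\textsf{\textbf{r}}}(\alpha_S)=\mathop{\textsf{\textbf{d}}}(\beta_S)=\{b_1,\ldots,b_i\}$ and $\mathop{\textsf{\textbf{r}}}(\beta_S)=\mathop{\textsf{\textbf{d}}}(\alpha_S)=\{a_1,\ldots,a_i\}$, none of the products in $\alpha_S\cdot\beta_S\cdot\alpha_S$ vanishes, and clause~$(iii)$ of Construction~\ref{construction-1.1} gives that the $S$-entry in position $\ell$ of $\alpha_S\cdot\beta_S\cdot\alpha_S$ is $s_\ell t_\ell s_\ell=s_\ell$, with the same domain and range rows as $\alpha_S$; hence $\alpha_S\cdot\beta_S\cdot\alpha_S=\alpha_S$, so $\alpha_S$ is regular.

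For $(\Rightarrow)$, suppose $\alpha_S$ is regular and choose $\beta_S\in\mathscr{I}_\lambda^n(S)$ with $\alpha_S\cdot\beta_S\cdot\alpha_S=\alpha_S$; note $\beta_S\neq 0$. Writing $\beta_S=
{\small{\left(
\begin{array}{ccc}
 c_1 & \cdots & c_j \\
 t_1 & \cdots & t_j \\
 d_1 & \cdots & d_j
\end{array}
\right)}}$, the first obstacle is to pin down the underlying partial bijections: in $\mathscr{I}_\lambda^n$ we have $\alpha\beta\alpha=\alpha$, and since $\mathscr{I}_\lambda^n$ is inverse this forces $\beta$ on the set $\{b_1,\ldots,b_i\}$ to be the inverse of $\alpha$, i.e.\ up to reindexing $c_\ell=b_\ell$ and $d_\ell=a_\ell$ for $\ell=1,\ldots,i$ (the remaining points of $\operatorname{dom}\beta$, if any, are irrelevant, being killed in the triple product). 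Once the domain and range rows are matched, clause~$(iii)$ applied twice shows that the $S$-entry of $\alpha_S\cdot\beta_S\cdot\alpha_S$ in the position corresponding to $a_\ell\mapsto b_\ell$ is $s_\ell t_\ell s_\ell$, and comparing with $\alpha_S$ yields $s_\ell t_\ell s_\ell=s_\ell$ for each $\ell$; thus every $s_\ell$ is regular in $S$.

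The only genuinely delicate point is the bookkeeping in the last paragraph — verifying that $\beta$ restricted to $\mathop{\textsf{\textbf{r}}}(\alpha_S)$ must agree with $\alpha^{-1}$ and that extra graph points of $\beta_S$ do not interfere — but this is the same kind of finite combinatorial argument on indices already used in Proposition~\ref{proposition-2.1}, so I expect it to go through routinely.
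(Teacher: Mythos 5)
Your proposal is correct and takes essentially the same approach as the paper: the $(\Leftarrow)$ direction is identical, and in $(\Rightarrow)$ you identify the underlying partial bijection of $\beta_S$ on $\{b_1,\ldots,b_i\}$ with $\alpha^{-1}$ and then read off $s_\ell t_\ell s_\ell=s_\ell$ coordinatewise, which is exactly the paper's index-matching computation. The only nitpick is that this identification follows from injectivity of $\alpha$ (cancel $\alpha$ on the right in $((x\alpha)\beta)\alpha=x\alpha$) rather than from $\mathscr{I}_\lambda^n$ being inverse --- in an abstract inverse semigroup $aba=a$ only forces $bab=a^{-1}$, not any pointwise statement about $b$ --- but for partial bijections the claim is true as you assert and the rest goes through.
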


\begin{proof}
The implication $(\Leftarrow)$ is trivial. Indeed, $\alpha_S=\alpha_S\beta_S\alpha_S$ for $\beta_S=
  {\small{\left(%
  \begin{array}{ccc}
   b_1 & \cdots & b_i \\
   t_1 & \cdots & t_i \\
   a_1 & \cdots & a_i
  \end{array}%
  \right)}}
$, where elements $t_1,\ldots,t_i$ of the semigroup $S$ such that $s_1=s_1t_1s_1$, $\ldots,$  $s_i=s_it_is_i$.

$(\Rightarrow)$ Suppose that $\alpha_S$ is a regular element of the semigroup $\mathscr{I}_\lambda^n(S)$. Then there exists an element $\beta_S=
  {\small{\left(%
  \begin{array}{ccc}
   c_1 & \cdots & c_k \\
   t_1 & \cdots & t_k \\
   d_1 & \cdots & d_k
  \end{array}%
  \right)}}
$ of the semigroup $\mathscr{I}_\lambda^n(S)$, $0<k\leqslant n$, such that $\alpha_S=\alpha_S\cdot\beta_S\cdot\alpha_S$. Now, this implies that $\{b_1, \ldots, b_i\}\subseteq\{c_1, \ldots, c_k\}$ and hence $k\geqslant i$. Without loss of generality we may assume that $b_1=c_1, \ldots, b_i=c_i$. Then the equality $\alpha_S=\alpha_S\cdot\beta_S\cdot\alpha_S$ and the semigroup operation of $\mathscr{I}_\lambda^n(S)$ imply that $d_1=a_1, \ldots, d_i=a_i$ and hence we have that
\begin{equation*}
\begin{split}
  \alpha_S= & \;\alpha_S\cdot\beta_S\cdot\alpha_S=
  {\small{\left(%
  \begin{array}{ccc}
   a_1 & \cdots & a_i \\
   s_1 & \cdots & s_i \\
   b_1 & \cdots & b_i
  \end{array}%
  \right)}}\cdot
  {\small{\left(%
  \begin{array}{ccc}
   c_1 & \cdots & c_k \\
   t_1 & \cdots & t_k \\
   d_1 & \cdots & d_k
  \end{array}%
  \right)}}
    \cdot
  {\small{\left(%
  \begin{array}{ccc}
   a_1 & \cdots & a_i \\
   s_1 & \cdots & s_i \\
   b_1 & \cdots & b_i
  \end{array}%
  \right)}}=  \\
   = &
  {\small{\left(%
  \begin{array}{ccc}
   a_1 & \cdots & a_i \\
   s_1 & \cdots & s_i \\
   b_1 & \cdots & b_i
  \end{array}%
  \right)}}\cdot
  {\small{\left(%
  \begin{array}{cccccc}
   b_1 & \cdots & b_i & c_{i+1} & \cdots & c_k \\
   t_1 & \cdots & t_i & t_{i+1} & \cdots & t_k\\
   a_1 & \cdots & a_i & d_{i+1} & \cdots & d_k
  \end{array}%
  \right)}}
    \cdot
  {\small{\left(%
  \begin{array}{ccc}
   a_1 & \cdots & a_i \\
   s_1 & \cdots & s_i \\
   b_1 & \cdots & b_i
  \end{array}%
  \right)}}=  \\
  = & {\small{\left(%
  \begin{array}{ccc}
   a_1       & \cdots & a_i \\
   s_1t_1s_1 & \cdots & s_it_is_i \\
   b_1       & \cdots & b_i
  \end{array}%
  \right)}}=
  {\small{\left(%
  \begin{array}{ccc}
   a_1 & \cdots & a_i \\
   s_1 & \cdots & s_i \\
   b_1 & \cdots & b_i
  \end{array}%
  \right)}}.
\end{split}
\end{equation*}
This implies that the following equalities $s_1=s_1t_1s_1$, $\ldots,$ $s_i=s_it_is_i$ hold in $S$, which completes the proof of our proposition.
\end{proof}

Two elements $a$ and $b$ of a semigroup $S$ are said to be \emph{inverses} of each other if
\begin{equation*}
aba = a \qquad \hbox{and} \qquad bab = b.
\end{equation*}

The definition of the semigroup operation in $\mathscr{I}_\lambda^n(S)$ implies the following proposition.

\begin{proposition}\label{proposition-2.3}
Let $\lambda$ be a non-zero cardinal, $n$ and $i$ be any positive integers such that $i\leqslant n\leqslant\lambda$. Let $S$ be a semigroup and $a_1,\ldots,a_i,b_1,\ldots,b_i\in\lambda$. If the elements $s_1$ and $t_1$, $\ldots ,$ $s_i$ and $t_i$ are pairwise inverses of each other in $S$ then the elements
$
    {\small{\left(%
  \begin{array}{ccc}
   a_1 & \cdots & a_i \\
   s_1 & \cdots & s_i \\
   b_1 & \cdots & b_i
  \end{array}%
  \right)}}
$ and
$    {\small{\left(%
  \begin{array}{ccc}
   b_1 & \cdots & b_i\\
   t_1 & \cdots & t_i\\
   a_1 & \cdots & a_i \\
  \end{array}%
  \right)}}
$
are pairwise inverses of each other in the semigroup $\mathscr{I}_\lambda^n(S)$.
\end{proposition}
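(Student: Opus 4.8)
The plan is to verify the two defining identities $\alpha_S\beta_S\alpha_S=\alpha_S$ and $\beta_S\alpha_S\beta_S=\beta_S$ directly from the definition of the semigroup operation on $\mathscr{I}_\lambda^n(S)$, where $\alpha_S={\small{\left(\begin{array}{ccc}a_1 & \cdots & a_i \\ s_1 & \cdots & s_i \\ b_1 & \cdots & b_i\end{array}\right)}}$ and $\beta_S={\small{\left(\begin{array}{ccc}b_1 & \cdots & b_i \\ t_1 & \cdots & t_i \\ a_1 & \cdots & a_i\end{array}\right)}}$. The computation is symmetric in $\alpha_S$ and $\beta_S$, so it suffices to check the first identity.

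First I would compute $\alpha_S\beta_S$ on the level of the underlying partial bijections: $\alpha=\left(\begin{array}{ccc}a_1 & \cdots & a_i \\ b_1 & \cdots & b_i\end{array}\right)$ and $\beta=\left(\begin{array}{ccc}b_1 & \cdots & b_i \\ a_1 & \cdots & a_i\end{array}\right)$ satisfy $\operatorname{ran}\alpha=\{b_1,\ldots,b_i\}=\operatorname{dom}\beta$, so $\alpha\beta=\left(\begin{array}{ccc}a_1 & \cdots & a_i \\ a_1 & \cdots & a_i\end{array}\right)\neq 0$ with all indices $1,\ldots,i$ surviving the composition (case $(iii)$ of Construction~\ref{construction-1.1} applies with $m=i$, $i_p=p$, $j_p=p$). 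Hence $\alpha_S\beta_S={\small{\left(\begin{array}{ccc}a_1 & \cdots & a_i \\ s_1t_1 & \cdots & s_it_i \\ a_1 & \cdots & a_i\end{array}\right)}}$. Next I would compose this with $\alpha_S$: the underlying bijection $\left(\begin{array}{ccc}a_1 & \cdots & a_i \\ a_1 & \cdots & a_i\end{array}\right)\cdot\left(\begin{array}{ccc}a_1 & \cdots & a_i \\ b_1 & \cdots & b_i\end{array}\right)=\left(\begin{array}{ccc}a_1 & \cdots & a_i \\ b_1 & \cdots & b_i\end{array}\right)\neq 0$, again with all indices surviving, so by case $(iii)$ the middle row of $(\alpha_S\beta_S)\alpha_S$ is $(s_1t_1)s_1,\ldots,(s_it_i)s_i$. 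By the inverse relations $s_pt_ps_p=s_p$ for each $p=1,\ldots,i$, this equals $s_1,\ldots,s_i$, so $(\alpha_S\beta_S)\alpha_S={\small{\left(\begin{array}{ccc}a_1 & \cdots & a_i \\ s_1 & \cdots & s_i \\ b_1 & \cdots & b_i\end{array}\right)}}=\alpha_S$. Interchanging the roles of $(a_\bullet,s_\bullet,b_\bullet)$ with $(b_\bullet,t_\bullet,a_\bullet)$ and using $t_ps_pt_p=t_p$ gives $\beta_S\alpha_S\beta_S=\beta_S$ by the identical argument.

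There is no real obstacle here; the statement is essentially a bookkeeping exercise, and the only point requiring any care is confirming that no columns are dropped in the two products — that is, that the index set of surviving columns in each application of case $(iii)$ is all of $\{1,\ldots,i\}$ — which is immediate because $\operatorname{ran}\alpha=\operatorname{dom}\beta$ exactly and $\operatorname{ran}(\alpha\beta)=\operatorname{dom}\alpha$ exactly. I would also note at the outset that both $\alpha_S$ and $\beta_S$ are genuine elements of $\mathscr{I}_\lambda^n(S)$ since $i\leqslant n$ and the $a_p$ (resp. $b_p$) are pairwise distinct, so the products are well-defined and one never lands in case $(ii)$.
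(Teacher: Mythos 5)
Your proof is correct and is exactly the routine verification the paper has in mind: the paper offers no written proof, stating only that Proposition~\ref{proposition-2.3} follows from the definition of the semigroup operation, and your computation of $\alpha_S\beta_S\alpha_S$ and $\beta_S\alpha_S\beta_S$ via case $(iii)$ of Construction~\ref{construction-1.1}, together with the observation that no columns are dropped because $\operatorname{ran}\alpha=\operatorname{dom}\beta$ exactly, fills in precisely that omitted bookkeeping. Nothing further is needed.
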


For arbitrary semigroup $S$, every positive integer $i\leqslant n$, any collection non-empty subsets $A_1,\ldots, A_i$ of $S$,  and any two collections of distinct elements $\{a_1,\ldots,a_i\}$ and $\{b_1,\ldots,b_i\}$ of the cardinal $\lambda$ we denote a subset
\begin{equation*}
[A_1,\ldots, A_i]^{(a_1,\ldots,a_i)}_{(b_1,\ldots,b_i)}=\left\{
  {\small{\left(%
  \begin{array}{ccc}
   a_1 & \cdots & a_i \\
   s_1 & \cdots & s_i \\
   b_1 & \cdots & b_i
  \end{array}%
  \right)}}\colon s_1\in A_1, \ldots, s_i\in A_i\right\}
  \end{equation*}
of $\mathscr{I}_\lambda^n(S)$. I the case when $A_1=\ldots=A_i=A$ in $S$ we put $[A]^{(a_1,\ldots,a_i)}_{(b_1,\ldots,b_i)}=[A_1,\ldots, A_i]^{(a_1,\ldots,a_i)}_{(b_1,\ldots,b_i)}$. It is obvious that for every subset $A$ of the semigroup $S$ and any permutation $\sigma\colon\{1,\ldots,i\}\rightarrow\{1,\ldots,i\}$ we have that
\begin{equation*}
[A]^{(a_{(1)\sigma},\ldots,a_{(i)\sigma})}_{(b_{(1)\sigma},\ldots,b_{(i)\sigma})}= [A]^{(a_1,\ldots,a_i)}_{(b_1,\ldots,b_i)}.
\end{equation*}

\begin{proposition}\label{proposition-2.4}
Let $\lambda$ be a non-zero cardinal and $n$ be any positive integer $\leqslant\lambda$. Then for arbitrary semigroup $S$, every positive integer $i\leqslant n$ and any collection of distinct elements $\{a_1,\ldots,a_i\}$ of $\lambda$ the direct power $S^i$ is isomorphic to a subsemigroup $S^{(a_1,\ldots,a_i)}_{(a_1,\ldots,a_i)}$ of $\mathscr{I}_\lambda^n(S)$.
\end{proposition}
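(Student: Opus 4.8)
The plan is to exhibit an explicit isomorphism and then verify bijectivity and the homomorphism property. Define a map $\mathfrak{h}\colon S^i\to\mathscr{I}_\lambda^n(S)$ by the rule
\[
(s_1,\ldots,s_i)\mathfrak{h}=
{\small{\left(
\begin{array}{ccc}
 a_1 & \cdots & a_i \\
 s_1 & \cdots & s_i \\
 a_1 & \cdots & a_i
\end{array}
\right)}}.
\]
Since $i\leqslant n\leqslant\lambda$, every $a_k$ lies in $\lambda$ and the right-hand side is a genuine element of $\mathscr{I}_\lambda^n(S)$ of rank $i\leqslant n$; because the tuple $(a_1,\ldots,a_i)$ is held fixed, the image of $\mathfrak{h}$ is exactly $S^{(a_1,\ldots,a_i)}_{(a_1,\ldots,a_i)}$, so $\mathfrak{h}$ maps onto this set.

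For injectivity I would invoke the identification of an element of $\mathscr{I}_\lambda^n(S)$ with a map $\alpha_S\colon\operatorname{\textsf{Gr}}(\alpha)\to S$: if $(s_1,\ldots,s_i)\mathfrak{h}=(t_1,\ldots,t_i)\mathfrak{h}$, then the two maps have common domain $\{(a_1,a_1),\ldots,(a_i,a_i)\}$ and coincide there, and since the $a_k$ are pairwise distinct the pair $(a_k,a_k)$ singles out the index $k$ without ambiguity, whence $s_k=t_k$ for all $k=1,\ldots,i$.

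The heart of the argument is the homomorphism property. In $\mathscr{I}_\lambda^n$ the partial bijection $\left(\begin{smallmatrix} a_1 & \cdots & a_i \\ a_1 & \cdots & a_i\end{smallmatrix}\right)$ is the identity map on $\{a_1,\ldots,a_i\}$ and is therefore idempotent, so for the two partial bijections underlying $(s_1,\ldots,s_i)\mathfrak{h}$ and $(t_1,\ldots,t_i)\mathfrak{h}$ we have $\alpha\cdot\beta=\left(\begin{smallmatrix} a_1 & \cdots & a_i \\ a_1 & \cdots & a_i\end{smallmatrix}\right)\neq 0$, with all $i$ columns surviving. Clause~$(iii)$ of Construction~\ref{construction-1.1} then yields
\[
\big((s_1,\ldots,s_i)\mathfrak{h}\big)\cdot\big((t_1,\ldots,t_i)\mathfrak{h}\big)=
{\small{\left(
\begin{array}{ccc}
 a_1    & \cdots & a_i \\
 s_1t_1 & \cdots & s_it_i \\
 a_1    & \cdots & a_i
\end{array}
\right)}}=
(s_1t_1,\ldots,s_it_i)\mathfrak{h},
\]
i.e. $\mathfrak{h}$ transports the coordinatewise multiplication of $S^i$ to the operation of $\mathscr{I}_\lambda^n(S)$. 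This display in particular shows that $S^{(a_1,\ldots,a_i)}_{(a_1,\ldots,a_i)}$ is closed under the product, hence is a subsemigroup, and that $\mathfrak{h}$ is the desired isomorphism of $S^i$ onto it.

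I do not anticipate any serious obstacle; the single point requiring care is the injectivity step, namely that equality of two elements of $\mathscr{I}_\lambda^n(S)$ means equality of the associated functions on (necessarily equal) graphs, which together with the distinctness of $a_1,\ldots,a_i$ recovers each coordinate $s_k$.
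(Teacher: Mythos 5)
Your proof is correct and uses exactly the same isomorphism $\mathfrak{h}$ as the paper, which simply states the formula and leaves the verifications implicit; you have merely spelled out the routine checks (injectivity via the graph identification, the homomorphism property via clause $(iii)$ of the construction). No further comment is needed.
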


\begin{proof}
The semigroup operation of $\mathscr{I}_\lambda^n(S)$ implies that $S^{a_1,\ldots,a_i}_{a_1,\ldots,a_i}$ is a subsemigroup of $\mathscr{I}_\lambda^n(S)$ for any collection of distinct elements $\{a_1,\ldots,a_i\}$ of $\lambda$. An isomorphism $\mathfrak{h}\colon S^i\rightarrow S^{(a_1,\ldots,a_i)}_{(a_1,\ldots,a_i)}$ we define by the formula $(s_1,\ldots,s_i)\mathfrak{h}=
  {\small{\left(%
  \begin{array}{ccc}
   a_1 & \cdots & a_i \\
   s_1 & \cdots & s_i \\
   a_1 & \cdots & a_i
  \end{array}%
  \right)}}$.
\end{proof}



\begin{proposition}\label{proposition-2.5}
For every semigroup $S$, any non-zero cardinal $\lambda$ and any positive integer $n\leqslant\lambda$ the following statements hold:
\begin{itemize}
  \item[$(i)$] $\mathscr{I}_\lambda^n(S)$ is regular if and only if so is $S$;
  \item[$(ii)$] $\mathscr{I}_\lambda^n(S)$ is orthodox if and only if so is $S$;
  \item[$(iii)$] $\mathscr{I}_\lambda^n(S)$ is inverse if and only if so is $S$.
\end{itemize}
\end{proposition}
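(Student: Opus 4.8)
The plan is to prove each of the three equivalences by exploiting Propositions~\ref{proposition-2.1}, \ref{proposition-2.2} and \ref{proposition-2.3}, together with the observation from Proposition~\ref{proposition-2.4} that $S$ (indeed $S^i$) embeds in $\mathscr{I}_\lambda^n(S)$ via $s\mapsto{\small\left(\begin{smallmatrix}a_1\\ s\\ a_1\end{smallmatrix}\right)}$ for a fixed $a_1\in\lambda$; this last fact immediately gives the ``only if'' directions of all three parts, since regularity, orthodoxy and the inverse property are inherited by subsemigroups (for orthodoxy one also notes that $E(S^{(a_1)}_{(a_1)})$ is, by Proposition~\ref{proposition-2.1}, exactly the image of $E(S)$). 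So the work is in the ``if'' directions.

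For part~$(i)$, assume $S$ is regular. A nonzero element $\alpha_S={\small\left(\begin{smallmatrix}a_1&\cdots&a_i\\ s_1&\cdots&s_i\\ b_1&\cdots&b_i\end{smallmatrix}\right)}$ has each $s_k$ regular in $S$ (as $S$ is regular), so $\alpha_S$ is regular in $\mathscr{I}_\lambda^n(S)$ by the ``if'' direction of Proposition~\ref{proposition-2.2}; the zero $0$ is trivially regular. Hence every element is regular. For part~$(iii)$, assume $S$ is inverse. By part~$(i)$ the semigroup $\mathscr{I}_\lambda^n(S)$ is regular, so by a standard characterization it suffices to show idempotents commute. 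Take two nonzero idempotents; by Proposition~\ref{proposition-2.1} they are of the form ${\small\left(\begin{smallmatrix}a_1&\cdots&a_i\\ e_1&\cdots&e_i\\ a_1&\cdots&a_i\end{smallmatrix}\right)}$ and ${\small\left(\begin{smallmatrix}c_1&\cdots&c_j\\ f_1&\cdots&f_j\\ c_1&\cdots&c_j\end{smallmatrix}\right)}$ with all $e_k,f_l\in E(S)$. Their two products, computed by rule~$(iii)$ of Construction~\ref{construction-1.1}, are supported on the same index set $\{a_1,\dots,a_i\}\cap\{c_1,\dots,c_j\}$ (with matching domain and range rows), and on a common index $a_k=c_l$ the entries are $e_kf_l$ versus $f_le_k$, which agree because $E(S)$ is a commutative subsemigroup when $S$ is inverse. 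Thus the two products coincide and $\mathscr{I}_\lambda^n(S)$ is inverse. Conversely, if $\mathscr{I}_\lambda^n(S)$ is inverse then so is its subsemigroup $S^{(a_1)}_{(a_1)}\cong S$.

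For part~$(ii)$, assume $S$ is orthodox. By part~$(i)$, $\mathscr{I}_\lambda^n(S)$ is regular, so it remains to show $E(\mathscr{I}_\lambda^n(S))$ is closed under multiplication. Let $\varepsilon_S,\phi_S$ be two nonzero idempotents described as in the previous paragraph. If $\varepsilon_S\cdot\phi_S=0$ we are done; otherwise, by rule~$(iii)$ the product is ${\small\left(\begin{smallmatrix}a_{k_1}&\cdots&a_{k_m}\\ e_{k_1}f_{l_1}&\cdots&e_{k_m}f_{l_m}\\ a_{k_1}&\cdots&a_{k_m}\end{smallmatrix}\right)}$ where $a_{k_p}=c_{l_p}$ runs over the common indices, and its domain row equals its range row. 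Each entry $e_{k_p}f_{l_p}$ is a product of two idempotents of $S$, hence an idempotent since $E(S)$ is a subsemigroup; by Proposition~\ref{proposition-2.1} the product is therefore an idempotent of $\mathscr{I}_\lambda^n(S)$. Hence $E(\mathscr{I}_\lambda^n(S))$ is a subsemigroup and $\mathscr{I}_\lambda^n(S)$ is orthodox; the converse again follows by passing to the embedded copy of $S$.

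The only mild subtlety — and the step I would be most careful about — is the bookkeeping in rule~$(iii)$: one must verify that when both idempotents have equal top and bottom rows, the product $\alpha\beta$ in $\mathscr{I}_\lambda^n$ again has equal top and bottom rows (so that Proposition~\ref{proposition-2.1} genuinely applies), and that the surviving index set is exactly the set-intersection of the two support sets. This is immediate from the definition of composition in $\mathscr{I}_\lambda$ for partial bijections whose domain equals their range, but it is the one place where an otherwise purely formal argument touches the combinatorics of the construction. Everything else reduces to the three earlier propositions plus the subsemigroup-heredity of the relevant properties.
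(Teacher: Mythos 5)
Your ``if'' directions for all three parts are correct and essentially identical to the paper's argument: regularity of every element via the easy half of Proposition~\ref{proposition-2.2}; closure of $E(\mathscr{I}_\lambda^n(S))$ under multiplication via Proposition~\ref{proposition-2.1} together with the (correct) bookkeeping observation that the product of two idempotents is supported on the intersection of their supports with entries multiplied coordinatewise; and commutativity of idempotents plus the standard characterization of inverse semigroups for part~$(iii)$.

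The gap is in your blanket justification of the ``only if'' directions: regularity, orthodoxy and the property of being inverse are \emph{not} inherited by subsemigroups, so the embedding $S\cong S^{(a_1)}_{(a_1)}$ by itself proves nothing. (For instance, the additive group $\mathbb{Z}$ is a group, hence inverse, but its subsemigroup of positive integers is not even regular.) What rescues each direction is the specific structure of $\mathscr{I}_\lambda^n(S)$, not abstract heredity. For $(i)$, Proposition~\ref{proposition-2.2} says that $\left(\begin{smallmatrix} a\\ s\\ a\end{smallmatrix}\right)$ is regular in $\mathscr{I}_\lambda^n(S)$ \emph{if and only if} $s$ is regular in $S$; it is the ``only if'' half of that proposition, not subsemigroup heredity, that yields regularity of $S$ (this is exactly how the paper disposes of $(i)$). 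For $(ii)$, given $e,f\in E(S)$, the elements $\left(\begin{smallmatrix} a\\ e\\ a\end{smallmatrix}\right)$ and $\left(\begin{smallmatrix} a\\ f\\ a\end{smallmatrix}\right)$ are idempotents by Proposition~\ref{proposition-2.1}, their product is $\left(\begin{smallmatrix} a\\ ef\\ a\end{smallmatrix}\right)$, and orthodoxy of $\mathscr{I}_\lambda^n(S)$ together with Proposition~\ref{proposition-2.1} then forces $ef\in E(S)$; regularity of $S$ comes from $(i)$. For $(iii)$ one needs, as the paper does, Proposition~\ref{proposition-2.3} to see that $S^{(a_1)}_{(a_1)}$ is closed under the inversion of the ambient inverse semigroup (equivalently, one reads off $ef=fe$ in $S$ from the commuting of the corresponding idempotents upstairs). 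Each fix is one line, but as written the ``only if'' halves rest on a false general principle.
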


\begin{proof}
Statement $(i)$ follows from Proposition~\ref{proposition-2.2}.

$(ii)$ $(\Leftarrow)$ Suppose that $S$ is an orthodox semigroup. Then statement $(i)$ implies that the semigroup $\mathscr{I}_\lambda^n(S)$ is regular. By Proposition~\ref{proposition-2.1} every non-zero idempotent of the semigroup $\mathscr{I}_\lambda^n(S)$ has the form
$
{\small{\left(%
  \begin{array}{ccc}
   a_1 & \cdots & a_i \\
   e_1 & \cdots & e_i \\
   a_1 & \cdots & a_i
  \end{array}%
  \right)}}
$,
where $0<i\leqslant n$ and $e_1, \ldots, e_i$ are idempotents of $S$. This implies that the product of two idempotents of $\mathscr{I}_\lambda^n(S)$ is again an idempotent, and hence the semigroup $\mathscr{I}_\lambda^n(S)$ is orthodox.

$(\Rightarrow)$ Suppose that $\mathscr{I}_\lambda^n(S)$ is an orthodox semigroup. By Proposition~\ref{proposition-2.4}, $S^{(a)}_{(a)}$ is a subsemigroup of $\mathscr{I}_\lambda^n(S)$ for every $a\in\lambda$ and hence $S^{(a)}_{(a)}$ is orthodox. Then Proposition~\ref{proposition-2.4} implies the semigroup $S$ is orthodox, too.

$(iii)$ $(\Leftarrow)$ Suppose that $S$ is an inverse semigroup. By statement $(i)$ the semigroup $\mathscr{I}_\lambda^n(S)$ is regular. Then using Proposition~\ref{proposition-2.1} we get that idempotents commute in $\mathscr{I}_\lambda^n(S)$ and hence by Theorem~1.17 of \cite{CP}, $\mathscr{I}_\lambda^n(S)$ is an inverse semigroup.

$(\Rightarrow)$ Suppose that $\mathscr{I}_\lambda^n(S)$ is an inverse semigroup. By Proposition~\ref{proposition-2.4}, $S^{(a)}_{(a)}$ is a subsemigroup of $\mathscr{I}_\lambda^n(S)$ for every $a\in\lambda$, and by Proposition~\ref{proposition-2.3} it is an inverse subsemigroup. Hence by Proposition~\ref{proposition-2.4}, $S$ is an inverse semigroup.
\end{proof}

Since any homomorphic image of a regular (resp., orthodox, inverse) semigroup is a   regular (resp., orthodox, inverse) semigroup (see \cite[Section~7.4]{CP} and \cite[Lemma~2.2]{Meakin1971}),
Proposition~\ref{proposition-2.5} implies the following corollary.

\begin{corollary}\label{corollary-2.6}
For every semigroup $S$, any non-zero cardinal $\lambda$ and any positive integer $n\leqslant\lambda$ the following statements hold:
\begin{itemize}
  \item[$(i)$] $\overline{\mathscr{I}_\lambda^n}(S)$ is regular if and only if so is $S$;
  \item[$(ii)$] $\overline{\mathscr{I}_\lambda^n}(S)$ is orthodox if and only if so is $S$;
  \item[$(iii)$] $\overline{\mathscr{I}_\lambda^n}(S)$ is inverse if and only if so is $S$.
\end{itemize}
\end{corollary}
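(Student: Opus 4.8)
The plan is to deduce Corollary~\ref{corollary-2.6} from Proposition~\ref{proposition-2.5} together with two structural facts: first, that homomorphic images of regular (resp.\ orthodox, resp.\ inverse) semigroups inherit the corresponding property, which is exactly the statement cited from \cite[Section~7.4]{CP} and \cite[Lemma~2.2]{Meakin1971}; and second, that $S$ embeds as a retract (or at least as a subsemigroup via a section of the quotient map) inside $\overline{\mathscr{I}_\lambda^n}(S)$, so that the reverse implications can be pulled back. Recall $\overline{\mathscr{I}_\lambda^n}(S)=\mathscr{I}_\lambda^n(S)/_{\equiv_0}$, and by Proposition~\ref{proposition-1.1} the natural map $\pi\colon\mathscr{I}_\lambda^n(S)\to\overline{\mathscr{I}_\lambda^n}(S)$ is a semigroup epimorphism.

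For the ($\Leftarrow$) directions: if $S$ is regular (resp.\ orthodox, inverse), then Proposition~\ref{proposition-2.5} gives that $\mathscr{I}_\lambda^n(S)$ is regular (resp.\ orthodox, inverse), and since $\overline{\mathscr{I}_\lambda^n}(S)$ is a homomorphic image of $\mathscr{I}_\lambda^n(S)$ under $\pi$, it inherits the property by the quoted results. This is the easy half and needs no new computation.

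For the ($\Rightarrow$) directions the key observation is that, for a fixed $a\in\lambda$, the subsemigroup $S^{(a)}_{(a)}\subseteq\mathscr{I}_\lambda^n(S)$ from Proposition~\ref{proposition-2.4} meets each $\equiv_0$-class in at most one point when $S$ has no zero, and in general one checks directly from the definition of $\equiv_0$ that distinct elements of $S^{(a)}_{(a)}$ of the form $\left(\begin{smallmatrix}a\\ s\\ a\end{smallmatrix}\right)$ with $s\ne 0_S$ are never $\equiv_0$-related; consequently $\pi$ restricts to an isomorphism of $S^{(a)}_{(a)}\setminus\mathcal{J}_0$-part onto its image, and in fact $\pi|_{S^{(a)}_{(a)}}$ composed with the isomorphism of Proposition~\ref{proposition-2.4} realizes a copy of $S$ (or of $S$ with its zero collapsed, which does not affect regularity/orthodoxy/being inverse) as a subsemigroup of $\overline{\mathscr{I}_\lambda^n}(S)$. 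Then, exactly as in the proof of Proposition~\ref{proposition-2.5}: a subsemigroup of a regular (resp.\ orthodox via Proposition~\ref{proposition-2.1}-type idempotent analysis, resp.\ inverse) semigroup of this specific retract form is again regular (resp.\ orthodox, inverse), hence $S$ has the property.

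The main obstacle is the bookkeeping around the zero of $S$: when $0_S\in S$, the class $\mathcal{J}_0$ is collapsed to a single zero in $\overline{\mathscr{I}_\lambda^n}(S)$, so one must argue that the image of $S^{(a)}_{(a)}$ in the quotient is isomorphic to $S$ itself (not to some proper quotient), or alternatively argue directly that if $\overline{\mathscr{I}_\lambda^n}(S)$ is inverse/orthodox/regular then so is the retract copy of $S$ sitting inside it, using that the section $s\mapsto\pi\!\left(\left(\begin{smallmatrix}a\\ s\\ a\end{smallmatrix}\right)\right)$ is an injective homomorphism. Once this embedding is pinned down, the rest is a routine invocation of the ``subsemigroup of an inverse semigroup generated by a retract'' argument already used for Proposition~\ref{proposition-2.5}, and the three equivalences follow in parallel.
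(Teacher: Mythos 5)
Your $(\Leftarrow)$ implications coincide exactly with the paper's entire argument: the published justification of Corollary~\ref{corollary-2.6} is the single observation that a homomorphic image of a regular (orthodox, inverse) semigroup is again such, applied to the quotient map $\pi\colon\mathscr{I}_\lambda^n(S)\to\overline{\mathscr{I}_\lambda^n}(S)$ together with Proposition~\ref{proposition-2.5}. The converse implications are not argued in the paper at all, so there your proposal is supplying a missing argument rather than reproducing one. Your embedding claim is correct: elements $\left(\begin{smallmatrix}a\\ s\\ a\end{smallmatrix}\right)$ and $\left(\begin{smallmatrix}a\\ t\\ a\end{smallmatrix}\right)$ with $s,t\neq 0_S$ lie outside $\mathcal{J}_0$ and are both determined at $(a,a)$, so clause $(3)$ of $\equiv_0$ forces $s=t$ whenever they are related; hence $s\mapsto\pi\bigl(\left(\begin{smallmatrix}a\\ s\\ a\end{smallmatrix}\right)\bigr)$ is an injective homomorphism of $S$ into $\overline{\mathscr{I}_\lambda^n}(S)$ carrying $0_S$ (when present) to the zero of the quotient.

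The one genuine soft spot is your concluding step, where the property is pulled back because the copy of $S$ is ``a subsemigroup \dots of this specific retract form.'' A subsemigroup of a regular semigroup need not be regular, and $\pi\bigl(S^{(a)}_{(a)}\bigr)$ is not a retract in the homomorphic sense: the map $x\mapsto\bar{\varepsilon}x\bar{\varepsilon}$ with $\bar{\varepsilon}=\pi\bigl(\left(\begin{smallmatrix}a\\ 1_S\\ a\end{smallmatrix}\right)\bigr)$ is not an endomorphism, and $S$ need not even be a monoid here. So as written this step is unjustified. It is, however, repaired by a direct computation in the style of Propositions~\ref{proposition-2.1} and~\ref{proposition-2.2} carried out modulo $\equiv_0$: if $\pi(\alpha_S)\pi(\beta_S)\pi(\alpha_S)=\pi(\alpha_S)$ with $\alpha_S=\left(\begin{smallmatrix}a\\ s\\ a\end{smallmatrix}\right)$ and $s\neq 0_S$, then $\alpha_S\beta_S\alpha_S\equiv_0\alpha_S$; the left-hand side cannot lie in $\mathcal{J}_0$ (as $\alpha_S$ does not), so it equals $\left(\begin{smallmatrix}a\\ sts\\ a\end{smallmatrix}\right)$ with $t=(a,a)\beta_S$, and clause $(3)$ yields $sts=s$, whence $S$ is regular. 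The same bookkeeping applied to $\left(\begin{smallmatrix}a\\ ef\\ a\end{smallmatrix}\right)$ for $e,f\in E(S)$ gives $(ef)^2=ef$ in the orthodox case and $ef=fe$ in the inverse case. With that substitution in place of the retract appeal, your proof is complete.
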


If $S$ is a semigroup, then we shall denote by $\mathscr{R}$,
$\mathscr{L}$, $\mathscr{J}$, $\mathscr{D}$ and $\mathscr{H}$ the
Green relations on $S$ (see \cite{GreenJ1951} or \cite[Section~2.1]{CP}):
\begin{center}
\begin{tabular}{rcl}
  $a\mathscr{R}b$ & if and only if & $aS^1=bS^1$; \\
  $a\mathscr{L}b$ & if and only if & $S^1a=S^1b$; \\
  $a\mathscr{J}b$ & if and only if & $S^1aS^1=S^1bS^1$; \\
    & $\mathscr{D}=\mathscr{L}{\circ}\mathscr{R}=\mathscr{R}{\circ}\mathscr{L}$; &\\
    & $\mathscr{H}=\mathscr{L}\cap\mathscr{R}$. &\\
\end{tabular}
\end{center}

\begin{remark}\label{remark-2.7}
It is obvious that for non-zero elements $\alpha_S=
  {\small{\left(\!\!
  \begin{array}{ccc}
   a_1 & \cdots & a_i \\
   s_1 & \cdots & s_i \\
   b_1 & \cdots & b_i
  \end{array}\!\!
  \right)}}
  $
and $\beta_S=
  {\small{\left(\!\!
  \begin{array}{ccc}
   c_1 & \cdots & c_k \\
   t_1 & \cdots & t_k \\
   d_1 & \cdots & d_k
  \end{array}\!\!
  \right)}}
$ of the semigroup $\mathscr{I}_\lambda^n(S)$ any of conditions $\alpha_S\mathscr{R}\beta_S$, $\alpha_S\mathscr{L}\beta_S$, $\alpha_S\mathscr{D}\beta_S$, $\alpha_S\mathscr{J}\beta_S$, or $\alpha_S\mathscr{H}\beta_S$ implies the equality $i=k$.
\end{remark}

The following proposition describes the Green relations on the semigroup $\mathscr{I}_\lambda^n(S)$.

\begin{proposition}\label{proposition-2.8}
Let $S$ be a monoid, $\lambda$ be any non-zero cardinal and $n\leqslant\lambda$. Let $\alpha_S=
  {\small{\left(\!\!
  \begin{array}{ccc}
   a_1 & \cdots & a_i \\
   s_1 & \cdots & s_i \\
   b_1 & \cdots & b_i
  \end{array}\!\!
  \right)}}
  $
and $\beta_S=
  {\small{\left(\!\!
  \begin{array}{ccc}
   c_1 & \cdots & c_i \\
   t_1 & \cdots & t_i \\
   d_1 & \cdots & d_i
  \end{array}\!\!
  \right)}}
$ be non-zero elements of the semigroup $\mathscr{I}_\lambda^n(S)$. Then the following conditions hold:
\begin{itemize}
  \item[$(i)$] $\alpha_S\mathscr{R}\beta_S$ in $\mathscr{I}_\lambda^n(S)$ if and only if there exists a permutation $\sigma\colon\{1,\ldots,i\}\rightarrow\{1,\ldots,i\}$ such that $a_1=c_{(1)\sigma}$, $\ldots, a_i=c_{(i)\sigma}$ and $s_1\mathscr{R}t_{(1)\sigma}$, $\ldots, s_i\mathscr{R}t_{(i)\sigma}$ in $S$;
  \item[$(ii)$] $\alpha_S\mathscr{L}\beta_S$ in $\mathscr{I}_\lambda^n(S)$ if and only if there exists a permutation $\sigma\colon\{1,\ldots,i\}\rightarrow\{1,\ldots,i\}$ such that $b_1=d_{(1)\sigma}$, $\ldots, b_i=d_{(i)\sigma}$ and $s_1\mathscr{L}t_{(1)\sigma}$, $\ldots, s_i\mathscr{L}t_{(i)\sigma}$ in $S$;
  \item[$(iii)$] $\alpha_S\mathscr{D}\beta_S$ in $\mathscr{I}_\lambda^n(S)$ if and only if there exists a permutation $\sigma\colon\{1,\ldots,i\}\rightarrow\{1,\ldots,i\}$ such that $s_1\mathscr{D}t_{(1)\sigma}$, $\ldots, s_i\mathscr{D}t_{(i)\sigma}$ in $S$;
  \item[$(iv)$] $\alpha_S\mathscr{H}\beta_S$ in $\mathscr{I}_\lambda^n(S)$ if and only if there exists a permutations $\sigma,\rho\colon\{1,\ldots,i\}\rightarrow\{1,\ldots,i\}$ such that $s_1\mathscr{R}t_{(1)\sigma}$, $\ldots, s_i\mathscr{R}t_{(i)\sigma}$ and $s_1\mathscr{L}t_{(1)\rho}$, $\ldots, s_i\mathscr{L}t_{(i)\rho}$ in $S$;
  \item[$(v)$] $\alpha_S\mathscr{J}\beta_S$ in $\mathscr{I}_\lambda^n(S)$ if and only if there exists a permutation $\pi\colon\{1,\ldots,i\}\rightarrow\{1,\ldots,i\}$ such that $s_1\mathscr{J}t_{(1)\pi}$, $\ldots, s_i\mathscr{J}t_{(i)\pi}$ in $S$.
\end{itemize}
\end{proposition}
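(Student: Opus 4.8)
The plan is to prove the five statements essentially in the order $(i)$, $(ii)$, $(iii)$, $(iv)$, $(v)$, since the later parts reduce to the earlier ones via the standard identities $\mathscr{H}=\mathscr{L}\cap\mathscr{R}$, $\mathscr{D}=\mathscr{L}\circ\mathscr{R}$, and the easy inclusion $\mathscr{D}\subseteq\mathscr{J}$. Throughout I would use Remark~\ref{remark-2.7} to know that $i=k$ for any of the Green-related pairs, so the notation in the statement (both elements of "rank" $i$) is justified, and I would freely use Proposition~\ref{proposition-2.4} together with the permutation-invariance of the bracket notation $[A]^{(a_1,\ldots,a_i)}_{(b_1,\ldots,b_i)}$ observed just before Proposition~\ref{proposition-2.4}.

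For $(i)$: the direction $(\Leftarrow)$ is a direct computation — given the permutation $\sigma$ and witnesses $u_j$ in $S^1$ with $s_j u_{(j)\sigma}' = t_{(j)\sigma}$ etc. (here one uses that $S$ is a monoid, so $S^1 = S$ and right multipliers live in $S$), multiply $\alpha_S$ on the right by the element $\left(\begin{smallmatrix} b_1 & \cdots & b_i \\ u_1 & \cdots & u_i \\ d_{(1)\sigma} & \cdots & d_{(i)\sigma}\end{smallmatrix}\right)$ (with $u_j$ chosen so that $s_j u_j \mathscr{R}$-relates correctly after reindexing) to land on $\beta_S$, and symmetrically back; the monoid hypothesis guarantees the connecting elements have rank exactly $i$ so the product does not collapse. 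For $(\Rightarrow)$: if $\alpha_S \mathscr{R} \beta_S$ then $\alpha_S = \beta_S \gamma_S$ and $\beta_S = \alpha_S \delta_S$ for some $\gamma_S, \delta_S$; comparing domains forces $\{a_1,\ldots,a_i\} = \{c_1,\ldots,c_i\}$, which yields the permutation $\sigma$, and then reading off the $S$-entries from the product formula in Construction~\ref{construction-1.1}$(iii)$ gives $s_j = t_{(j)\sigma} x_{(j)\sigma}$ and $t_{(j)\sigma} = s_j y_j$ for suitable $x,y \in S$, i.e. $s_j \mathscr{R} t_{(j)\sigma}$ in $S$. Part $(ii)$ is the left-right dual, proved by the same argument applied to $S^1 a = S^1 b$ and left multiplication.

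For $(iii)$, I would use $\mathscr{D} = \mathscr{R} \circ \mathscr{L}$: $\alpha_S \mathscr{D} \beta_S$ iff there is $\gamma_S$ with $\alpha_S \mathscr{R} \gamma_S \mathscr{L} \beta_S$. Applying $(i)$ and $(ii)$ gives permutations $\sigma_1, \sigma_2$ and $\mathscr{R}$- resp. $\mathscr{L}$-relations between the $S$-entries of $\alpha_S$ and $\gamma_S$, and of $\gamma_S$ and $\beta_S$; composing, and using that $\mathscr{D}$ in $S$ equals $\mathscr{R} \circ \mathscr{L}$, produces the single permutation $\sigma = \sigma_1 \sigma_2$ (or its inverse, depending on the convention for acting on indices) with $s_j \mathscr{D} t_{(j)\sigma}$. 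Conversely, given such a $\sigma$, choose in $S$ for each $j$ an element $r_j$ with $s_j \mathscr{R} r_j \mathscr{L} t_{(j)\sigma}$, assemble $\gamma_S$ with $S$-entries $r_j$ on the same domain/range rows as $\alpha_S$ and $\beta_S$ appropriately permuted, and invoke $(i)$ and $(ii)$ to get $\alpha_S \mathscr{R} \gamma_S \mathscr{L} \beta_S$. Part $(iv)$ is then immediate: $\mathscr{H} = \mathscr{L} \cap \mathscr{R}$, so one simply conjoins the characterizations from $(i)$ and $(ii)$, noting that the two permutations $\sigma$ (from $\mathscr{R}$) and $\rho$ (from $\mathscr{L}$) need not coincide.

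For $(v)$, the inclusion $\mathscr{D} \subseteq \mathscr{J}$ plus $(iii)$ immediately gives the "if" direction of the $\mathscr{J}$-characterization once we know $s_j \mathscr{D} t_{(j)\pi}$ implies $s_j \mathscr{J} t_{(j)\pi}$; the substantive direction is $(\Rightarrow)$. Here $\alpha_S \mathscr{J} \beta_S$ means $\alpha_S = \gamma_S \beta_S \delta_S$ and $\beta_S = \mu_S \alpha_S \nu_S$ for some elements of $\mathscr{I}_\lambda^n(S)$; tracking domains and ranges through the product formula (and using that these sandwich factors, being in $\mathscr{I}_\lambda^n$, have rank $\geqslant i$ on the relevant block, hence exactly $i$ by Remark~\ref{remark-2.7}) shows the indices match up via a permutation $\pi$, and the $S$-coordinate of the sandwich product is $u_{(j)\pi}' \, t_{(j)\pi} \, v_{(j)\pi}'' = s_j$ with factors in $S^1 = S$, i.e. $s_j \in S^1 t_{(j)\pi} S^1$; symmetrically $t_{(j)\pi} \in S^1 s_j S^1$, giving $s_j \mathscr{J} t_{(j)\pi}$. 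I expect the main obstacle to be purely bookkeeping: keeping the index permutation consistent under composition in parts $(iii)$ and $(v)$ (the direction of $\sigma$ flips depending on whether one multiplies on the left or the right), and verifying that all the connecting elements one constructs in the $(\Leftarrow)$ directions genuinely have rank $\leqslant n$ and do not cause the product to drop to $0$ — both of which are guaranteed because $i \leqslant n$ and $S$ is a monoid, so the identity of $S$ can always be used to pad the $S$-entries of the connecting elements without changing their rank.
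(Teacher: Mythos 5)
Your route coincides with the paper's almost everywhere: parts $(i)$ and $(ii)$ by direct computation with the product formula of Construction~\ref{construction-1.1} (using the monoid hypothesis so that the $\mathscr{R}$- and $\mathscr{L}$-witnesses lie in $S^1=S$ and can be packaged into rank-$i$ connecting elements), part $(iii)$ via $\mathscr{D}=\mathscr{R}\circ\mathscr{L}$ together with $(i)$ and $(ii)$, part $(iv)$ as the conjunction of $(i)$ and $(ii)$ with two independent permutations, and the forward direction of $(v)$ by tracking indices and $S$-entries through the two sandwich factorizations $\alpha_S=\gamma\beta_S\delta$, $\beta_S=\gamma'\alpha_S\delta'$. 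The bookkeeping issues you single out (the direction in which the permutation acts under left versus right multiplication, and the fact that the connecting elements do not collapse the rank) are exactly what the paper's explicit computations are verifying, so that part of the plan is sound and is essentially the published argument.

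The one step that fails is your dismissal of the $(\Leftarrow)$ direction of $(v)$. The hypothesis there is only that $s_j\mathscr{J}t_{(j)\pi}$ in $S$ for each $j$; deducing $\alpha_S\mathscr{J}\beta_S$ from $(iii)$ together with $\mathscr{D}\subseteq\mathscr{J}$ would require the stronger hypothesis $s_j\mathscr{D}t_{(j)\pi}$, and in a general (infinite) monoid $\mathscr{J}$ is strictly coarser than $\mathscr{D}$, so your reduction proves the wrong implication and leaves the case $s_j\mathscr{J}t_{(j)\pi}$ but $s_j\,\cancel{\mathscr{D}}\,t_{(j)\pi}$ uncovered. (Please read $\cancel{\mathscr{D}}$ as ``not $\mathscr{D}$-related''; the point is only that the two relations differ.) What is needed, and what the paper does, is a direct construction: write $s_j=x_jt_{(j)\pi}u_j$ and $t_j=y_js_{(j)\pi^{-1}}v_j$ with $x_j,u_j,y_j,v_j\in S^1=S$, and exhibit explicit two-sided sandwich elements of $\mathscr{I}_\lambda^n(S)$ --- diagonal elements with domain and range $\{c_1,\ldots,c_i\}$ (respectively $\{b_1,\ldots,b_i\}$, $\{a_1,\ldots,a_i\}$, $\{d_1,\ldots,d_i\}$) and $S$-entries the suitably permuted $x_j$, $u_j$, $y_j$, $v_j$ --- realizing $\alpha_S$ as a product $\gamma\beta_S\delta$ and $\beta_S$ as $\gamma'\alpha_S\delta'$. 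This is routine and entirely parallel to your $(\Leftarrow)$ argument for $(i)$, so the gap is easily repaired, but as written that step does not go through.
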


\begin{proof}
$(i)$ $(\Rightarrow)$ Suppose that $\alpha_S\mathscr{R}\beta_S$ in $\mathscr{I}_\lambda^n(S)$. Then there exist non-zero elements $\gamma_S={\small{\left(\!\!
  \begin{array}{ccc}
   e_1 & \cdots & e_k \\
   u_1 & \cdots & u_k \\
   f_1 & \cdots & f_k
  \end{array}\!\!
  \right)}}$
and $\delta_S={\small{\left(\!\!
  \begin{array}{ccc}
   g_1 & \cdots & g_j \\
   v_1 & \cdots & v_j \\
   h_1 & \cdots & h_j
  \end{array}\!\!
  \right)}}$
of the semigroup $\mathscr{I}_\lambda^n(S)$ such that $\alpha_S=\beta_S\gamma_S$, $\beta_S=\alpha_S\delta_S$, $i\leqslant j\leqslant n$ and $i\leqslant k\leqslant n$. Also, the definition of the semigroup operation of $\mathscr{I}_\lambda^n(S)$ implies that without loss of generality we may assume that $j=k=i$. Then the equalities $\alpha_S=\beta_S\gamma_S$ and $\beta_S=\alpha_S\delta_S$ imply that $\{a_1, \ldots, a_i\}=\{c_1,\ldots, c_i\}$, $\{b_1,\ldots, b_i\}=\{g_1,\ldots, g_i\}$ and $\{d_1, \ldots, d_i\}=\{e_1,\ldots, e_i\}$. Now, the semigroup operation of $\mathscr{I}_\lambda^n(S)$ implies that there exist permutations $\sigma,\rho,\zeta\colon\{1,\ldots,i\}\rightarrow\{1,\ldots,i\}$ such that $a_1=c_{(1)\sigma}$, $\ldots, a_i=c_{(i)\sigma}$, $d_1=e_{(1)\rho}$, $\ldots, d_i=e_{(i)\rho}$, and $b_1=g_{(1)\zeta}, \ldots, b_i=g_{(i)\zeta}$, and hence we have that
\begin{equation*}
\begin{split}
    {\small{\left(\!\!
  \begin{array}{ccc}
   a_1 & \cdots & a_i \\
   s_1 & \cdots & s_i \\
   b_1 & \cdots & b_i
  \end{array}\!\!
  \right)}}{=}&
  {\small{\left(\!\!
  \begin{array}{ccc}
   c_1 & \cdots & c_i \\
   t_1 & \cdots & t_i \\
   d_1 & \cdots & d_i
  \end{array}\!\!
  \right)}}
  {\cdot}
  {\small{\left(\!\!
  \begin{array}{ccc}
   e_1 & \cdots & e_i \\
   u_1 & \cdots & u_i \\
   f_1 & \cdots & f_i
  \end{array}\!\!
  \right)}}{=}
  {\small{\left(\!\!
  \begin{array}{ccc}
   c_1 & \cdots & c_i \\
   t_1 & \cdots & t_i \\
   d_1 & \cdots & d_i
  \end{array}\!\!
  \right)}}
  {\cdot}
  {\small{\left(\!\!
  \begin{array}{ccc}
   d_1         & \cdots & d_i \\
   u_{(1)\rho} & \cdots & u_{(i)\rho} \\
   f_{(1)\rho} & \cdots & f_{(i)\rho}
  \end{array}\!\!
  \right)}}{=}
  {\small{\left(\!\!
  \begin{array}{ccc}
   c_1            & \cdots & c_i \\
   t_1u_{(1)\rho} & \cdots & t_iu_{(i)\rho} \\
   f_{(1)\rho}    & \cdots & f_{(i)\rho}
  \end{array}\!\!
  \right)}}=\\
   = &
   {\small{\left(\!\!
  \begin{array}{ccc}
   a_1                              & \cdots & a_i \\
   t_{(1)\sigma}u_{((1)\rho)\sigma} & \cdots & t_{(i)\sigma}u_{((i)\rho)\sigma} \\
   f_{((1)\rho)\sigma}              & \cdots & f_{((i)\rho)\sigma}
  \end{array}\!\!
  \right)}}
\end{split}
\end{equation*}
and
\begin{equation*}
\begin{split}
  {\small{\left(\!\!
  \begin{array}{ccc}
   c_1 & \cdots & c_i \\
   t_1 & \cdots & t_i \\
   d_1 & \cdots & d_i
  \end{array}\!\!
  \right)}}
  {=}&
    {\small{\left(\!\!
  \begin{array}{ccc}
   a_1 & \cdots & a_i \\
   s_1 & \cdots & s_i \\
   b_1 & \cdots & b_i
  \end{array}\!\!
  \right)}}
  {\cdot}
  {\small{\left(\!\!
  \begin{array}{ccc}
   g_1 & \cdots & g_i \\
   v_1 & \cdots & v_i \\
   h_1 & \cdots & h_i
  \end{array}\!\!
  \right)}}{=}
  {\small{\left(\!\!
  \begin{array}{ccc}
   a_1 & \cdots & a_i \\
   s_1 & \cdots & s_i \\
   b_1 & \cdots & b_i
  \end{array}\!\!
  \right)}}
  {\cdot}
  {\small{\left(\!\!
  \begin{array}{ccc}
   b_1          & \cdots & b_i \\
   v_{(1)\zeta} & \cdots & v_{(i)\zeta} \\
   h_{(1)\zeta} & \cdots & h_{(i)\zeta}
  \end{array}\!\!
  \right)}}{=} 
  {\small{\left(\!\!
  \begin{array}{ccc}
   a_1             & \cdots & a_i \\
   s_1v_{(1)\zeta} & \cdots & s_iv_{(i)\zeta} \\
   h_{(1)\zeta}    & \cdots & h_{(i)\zeta}
  \end{array}\!\!
  \right)}}=\\
  = &
  {\small{\left(\!\!
  \begin{array}{ccc}
   c_1                                         & \cdots & c_i \\
   s_{(1)\sigma^{-1}}v_{((1)\zeta)\sigma^{-1}} & \cdots &
   s_{(i)\sigma^{-1}}v_{((i)\zeta)\sigma^{-1}} \\
   h_{((1)\zeta)\sigma^{-1}}                   & \cdots  & h_{((i)\zeta)\sigma^{-1}}
  \end{array}\!\!
  \right)}}.
\end{split}
\end{equation*}
Therefore we get that
\begin{equation}\label{eq-1}
   s_1=t_{(1)\sigma}u_{((1)\rho)\sigma}, \quad \ldots , \quad s_i=t_{(i)\sigma}u_{((i)\rho)\sigma}, \qquad \hbox{and} \qquad
   t_1=s_{(1)\sigma^{-1}}v_{((1)\zeta)\sigma^{-1}}, \; \ldots \; t_i=s_{(i)\sigma^{-1}}v_{((i)\zeta)\sigma^{-1}}.
\end{equation}
Since $\sigma\colon\{1,\ldots,i\}\rightarrow\{1,\ldots,i\}$ is a permutation conditions (\ref{eq-1}) imply that $s_1\mathscr{R}t_{(1)\sigma}$, $\ldots,$ $s_i\mathscr{R}t_{(i)\sigma}$ in $S$.

$(\Leftarrow)$ Suppose that for $\alpha_S,\beta_S\in\mathscr{I}_\lambda^n(S)$ there exists a permutation $\sigma\colon\{1,\ldots,i\}\rightarrow\{1,\ldots,i\}$ such that $a_1=c_{(1)\sigma}$, $\ldots, a_i=c_{(i)\sigma}$ and $s_1\mathscr{R}t_{(1)\sigma}$, $\ldots, s_i\mathscr{R}t_{(i)\sigma}$ in $S$. Then there exist $u_1,\ldots,u_i,v_1,\ldots,v_i\in S^1$ such that
\begin{equation*}
s_1=t_{(1)\sigma}u_{1}, \quad \ldots , \quad s_i=t_{(i)\sigma}u_{i}, \quad t_1=s_{(1)\sigma^{-1}}v_1, \quad \ldots \quad t_i=s_{(i)\sigma^{-1}}v_i.
\end{equation*}
Thus we get that
\begin{equation*}
  {\small{\left(\!\!
  \begin{array}{ccc}
   a_1 & \cdots & a_i \\
   s_1 & \cdots & s_i \\
   b_1 & \cdots & b_i
  \end{array}\!\!
  \right)}}{=}
  {\small{\left(\!\!
  \begin{array}{ccc}
   c_{(1)\sigma}      & \cdots & c_{(i)\sigma} \\
   t_{(1)\sigma}u_{1} & \cdots & t_{(i)\sigma}u_{i} \\
   b_1                & \cdots & b_i
  \end{array}\!\!
  \right)}}{=}
  {\small{\left(\!\!
  \begin{array}{ccc}
   c_1                   & \cdots & c_i \\
   t_1u_{(1)\sigma^{-1}} & \cdots & t_iu_{(i)\sigma^{-1}} \\
   b_{(1)\sigma^{-1}}    & \cdots & b_{(i)\sigma^{-1}}
  \end{array}\!\!
  \right)}}{=}
  {\small{\left(\!\!
  \begin{array}{ccc}
   c_1 & \cdots & c_i \\
   t_1 & \cdots & t_i \\
   d_1 & \cdots & d_i
  \end{array}\!\!
  \right)}}{\cdot}
  {\small{\left(\!\!
  \begin{array}{ccc}
   d_1                & \cdots & d_i \\
   u_{(1)\sigma^{-1}} & \cdots & u_{(i)\sigma^{-1}} \\
   b_{(1)\sigma^{-1}} & \cdots & b_{(i)\sigma^{-1}}
  \end{array}\!\!
  \right)}}
\end{equation*}
and
\begin{equation*}
  {\small{\left(\!\!
  \begin{array}{ccc}
   c_1 & \cdots & c_i \\
   t_1 & \cdots & t_i \\
   d_1 & \cdots & d_i
  \end{array}\!\!
  \right)}}{=}
  {\small{\left(\!\!
  \begin{array}{ccc}
   a_{(1)\sigma^{-1}}    & \cdots & a_{(i)\sigma^{-1}} \\
   s_{(1)\sigma^{-1}}v_1 & \cdots & s_{(i)\sigma^{-1}}v_i \\
   d_1                   & \cdots & d_i
  \end{array}\!\!
  \right)}}{=}
  {\small{\left(\!\!
  \begin{array}{ccc}
   a_1              & \cdots & a_i \\
   s_1v_{(1)\sigma} & \cdots & s_i v_{(i)\sigma} \\
   d_{(1)\sigma}    & \cdots & d_{(i)\sigma}
  \end{array}\!\!
  \right)}}{=}
  {\small{\left(\!\!
  \begin{array}{ccc}
   a_1 & \cdots & a_i \\
   s_1 & \cdots & s_i \\
   b_1 & \cdots & b_i
  \end{array}\!\!
  \right)}}
  {\cdot}
  {\small{\left(\!\!
  \begin{array}{ccc}
   b_1           & \cdots & b_i \\
   v_{(1)\sigma} & \cdots & v_{(i)\sigma} \\
   d_{(1)\sigma} & \cdots & d_{(i)\sigma}
  \end{array}\!\!
  \right)}},
\end{equation*}
and hence $\alpha_S\mathscr{R}\beta_S$ in $\mathscr{I}_\lambda^n(S)$.

The proof of statement $(ii)$ is similar to the proof of $(i)$.

$(iii)$ $(\Rightarrow)$ Suppose that $\alpha_S\mathscr{D}\beta_S$ in $\mathscr{I}_\lambda^n(S)$. Then there exist a non-zero element  $\gamma_S={\small{\left(\!\!
  \begin{array}{ccc}
   e_1 & \cdots & e_i \\
   u_1 & \cdots & u_i \\
   f_1 & \cdots & f_i
  \end{array}\!\!
  \right)}}$
of the semigroup $\mathscr{I}_\lambda^n(S)$ such that $\alpha_S\mathscr{R}\gamma_S$ and $\gamma_S\mathscr{L}\beta_S$ in $\mathscr{I}_\lambda^n(S)$. By statement $(i)$ there exists a permutation $\zeta\colon\{1,\ldots,i\}\rightarrow\{1,\ldots,i\}$ such that $e_1=a_{(1)\zeta}$, $\ldots, e_i=a_{(i)\zeta}$ and $u_1\mathscr{R}s_{(1)\zeta}$, $\ldots, u_i\mathscr{R}s_{(i)\zeta}$ in $S$ and by statement $(ii)$ there exists a permutation $\varsigma\colon\{1,\ldots,i\}\rightarrow\{1,\ldots,i\}$ such that $f_1=d_{(1)\varsigma},\ldots, f_i=d_{(i)\varsigma}$ and $u_1\mathscr{L}s_{(1)\varsigma}$, $\ldots, u_i\mathscr{L}s_{(i)\varsigma}$ in $S$. This implies that $s_1\mathscr{D}t_{(1)\sigma}$, $\ldots, s_i\mathscr{D}t_{(i)\sigma}$ in $S$ for the permutation $\sigma=\zeta\circ\varsigma^{-1}$ of $\{1,\ldots,i\}$.

$(\Leftarrow)$ Suppose that there exists a permutation $\sigma\colon\{1,\ldots,i\}\rightarrow\{1,\ldots,i\}$ such that $s_1\mathscr{D}t_{(1)\sigma}$, $\ldots,$ $s_i\mathscr{D}t_{(i)\sigma}$ in $S$. Then the definition of the relation $\mathscr{D}$ implies that there exist $u_1,\ldots,u_i\in S$ such that $s_1\mathscr{R}u_1$, $\ldots,$ $s_i\mathscr{R}u_i$ and $u_1\mathscr{L}t_{(1)\sigma}$, $\ldots,$ $u_i\mathscr{L}t_{(i)\sigma}$ in $S$. Now, for the element $\gamma_S={\small{\left(\!\!
  \begin{array}{ccc}
   a_1           & \cdots & a_i \\
   u_1           & \cdots & u_i \\
   d_{(1)\sigma} & \cdots & d_{(i)\sigma}
  \end{array}\!\!
  \right)}}$
of the semigroup $\mathscr{I}_\lambda^n(S)$ by statements $(i)$ and $(ii)$ we have that $\alpha_S\mathscr{R}\gamma_S$ and $\gamma_S\mathscr{L}\beta_S$ in $\mathscr{I}_\lambda^n(S)$.

$(iv)$ follows from statements $(i)$ and $(ii)$.

$(v)$ $(\Rightarrow)$ Suppose that $\alpha_S\mathscr{J}\beta_S$ in $\mathscr{I}_\lambda^n(S)$. Then there exist non-zero elements $\gamma_S^l={\small{\left(\!\!
  \begin{array}{ccc}
   e_1^l & \cdots & e_{k_l}^l \\
   u_1^l & \cdots & u_{k_l}^l \\
   f_1^l & \cdots & f_{k_l}^l
  \end{array}\!\!
  \right)}}$,
$\gamma_S^r={\small{\left(\!\!
  \begin{array}{ccc}
   e_1^r & \cdots & e_{k_r}^r \\
   u_1^r & \cdots & u_{k_r}^r \\
   f_1^r & \cdots & f_{k_r}^r
  \end{array}\!\!
  \right)}}$,
$\delta_S^l={\small{\left(\!\!
  \begin{array}{ccc}
   g_1^l & \cdots & g_{j_l}^l \\
   v_1^l & \cdots & v_{j_l}^l \\
   h_1^l & \cdots & h_{j_l}^l
  \end{array}\!\!
  \right)}}$
and $\delta_S^r={\small{\left(\!\!
  \begin{array}{ccc}
   g_1^r & \cdots & g_{j_r}^r \\
   v_1^r & \cdots & v_{j_r}^r \\
   h_1^r & \cdots & h_{j_r}^r
  \end{array}\!\!
  \right)}}$
of the semigroup $\mathscr{I}_\lambda^n(S)$ such that $\alpha_S=\gamma_S^l\beta_S\gamma_S^r$, $\beta_S=\delta_S^l\alpha_S\delta_S^r$ and $i\leqslant k_l,k_r,j_l,j_r \leqslant n$ (see \cite{GreenJ1951} or \cite[Section~II.1]{Grillet1995}). Also, the definition of the semigroup operation of $\mathscr{I}_\lambda^n(S)$ implies that without loss of generality we may assume that $k_l=k_r=j_l=j_r=i$. Then the equalities $\alpha_S=\gamma_S^l\beta_S\gamma_S^r$ and $\beta_S=\delta_S^l\alpha_S\delta_S^r$ imply that $\{a_1,\ldots, a_i\}=\{g_1^l, \ldots, g_i^l\}=\{h_1^l, \ldots, h_i^l\}$, $\{b_1,\ldots, b_i\}=\{f_1^r, \ldots, f_i^r\}=\{g_1^r, \ldots, g_i^r\}$, $\{c_1, \ldots, c_i\}=\{g_1^l, \ldots, g_i^l\}=\{f_1^l, \ldots, f_i^l\}$ and $\{d_1, \ldots, d_i\}=\{e_1^r, \ldots, e_i^r\}=\{h_1^r, \ldots, h_i^r\}$. Now, the semigroup operation of $\mathscr{I}_\lambda^n(S)$ implies that there exist permutations $\sigma,\rho,\zeta,\varsigma,\nu,\kappa \colon\{1,\ldots,i\}\rightarrow\{1,\ldots,i\}$ such that $a_1=e^l_{(1)\sigma}$, $\ldots, a_i=e^l_{(i)\sigma}$, $c_1=f^l_{(1)\rho}$, $\ldots, c_i=f^l_{(i)\rho}$, $d_1=e^r_{(1)\zeta}$, $\ldots, d_i=e^r_{(i)\zeta}$, $c_1=g^l_{(1)\varsigma}$, $\ldots, c_i=g^l_{(i)\varsigma}$, $a_1=h^l_{(1)\nu}$, $\ldots, a_i=h^l_{(i)\nu}$ and $b_1=g^r_{(1)\kappa}$, $\ldots, b_i=g^r_{(i)\kappa}$, and hence we have that
\begin{equation*}
\begin{split}
    &{\small{\left(\!\!
  \begin{array}{ccc}
   a_1 & \cdots & a_i \\
   s_1 & \cdots & s_i \\
   b_1 & \cdots & b_i
  \end{array}\!\!
  \right)}}
  {=}
  {\small{\left(\!\!
  \begin{array}{ccc}
   e_1^l & \cdots & e_{k_l}^l \\
   u_1^l & \cdots & u_{k_l}^l \\
   f_1^l & \cdots & f_{k_l}^l
  \end{array}\!\!
  \right)}}
  {\cdot}
  {\small{\left(\!\!
  \begin{array}{ccc}
   c_1 & \cdots & c_i \\
   t_1 & \cdots & t_i \\
   d_1 & \cdots & d_i
  \end{array}\!\!
  \right)}}
  {\cdot}
  {\small{\left(\!\!
  \begin{array}{ccc}
   e_1^r & \cdots & e_{k_r}^r \\
   u_1^r & \cdots & u_{k_r}^r \\
   f_1^r & \cdots & f_{k_r}^r
  \end{array}\!\!
  \right)}}{=}
  \\
  &=
  {\small{\left(\!\!
  \begin{array}{ccc}
   e_{(1)\rho}^l & \cdots & e_{(i)\rho}^l \\
   u_{(1)\rho}^l & \cdots & u_{(i)\rho}^l \\
   c_1           & \cdots & c_i
  \end{array}\!\!
  \right)}}
  {\cdot}
  {\small{\left(\!\!
  \begin{array}{ccc}
   c_1 & \cdots & c_i \\
   t_1 & \cdots & t_i \\
   d_1 & \cdots & d_i
  \end{array}\!\!
  \right)}}
  {\cdot}
  {\small{\left(\!\!
  \begin{array}{ccc}
   d_1            & \cdots & d_i \\
   u_{(1)\zeta}^r & \cdots & u_{(i)\zeta}^r \\
   f_{(1)\zeta}^r & \cdots & f_{(i)\zeta}^r
  \end{array}\!\!
  \right)}}=
  {\small{\left(\!\!
  \begin{array}{ccc}
   e_{(1)\rho}^l                  & \cdots & e_{(i)\rho}^l \\
   u_{(1)\rho}^lt_1u_{(1)\zeta}^r & \cdots & u_{(i)\rho}^lt_iu_{(i)\zeta}^r \\
   f_{(1)\zeta}^r                 & \cdots & f_{(i)\zeta}^r
  \end{array}\!\!
  \right)}}=\\
  &=
  {\small{\left(\!\!
  \begin{array}{ccc}
   e_1^l                                          & \cdots & e_i^l \\
   u_1^lt_{(1)\rho^{-1}}u_{((1)\zeta)\rho^{-1}}^r & \cdots & u_1^lt_{(i)\rho^{-1}}u_{((i)\zeta)\rho^{-1}}^r \\
   f_{((1)\zeta)\rho^{-1}}^r                      & \cdots &
   f_{((i)\zeta)\rho^{-1}}^r
  \end{array}\!\!
  \right)}}{=}
  {\small{\left(\!\!
  \begin{array}{ccc}
   a_1                               & \cdots & a_i \\
   u_{(1)\sigma}^lt_{((1)\rho^{-1})\sigma}u_{(((1)\zeta)\rho^{-1})\sigma}^r & \cdots & u_{(i)\sigma}^lt_{((i)\rho^{-1})\sigma}u_{(((i)\zeta)\rho^{-1})\sigma}^r \\
   f_{(((1)\zeta)\rho^{-1})\sigma}^r & \cdots &
   f_{(((i)\zeta)\rho^{-1})\sigma}^r
  \end{array}\!\!
  \right)}}
\end{split}
\end{equation*}
and
\begin{equation*}
\begin{split}
    &{\small{\left(\!\!
  \begin{array}{ccc}
   c_1 & \cdots & c_i \\
   t_1 & \cdots & t_i \\
   d_1 & \cdots & d_i
  \end{array}\!\!
  \right)}}=
  {\small{\left(\!\!
  \begin{array}{ccc}
   g_1^l & \cdots & g_i^l \\
   v_1^l & \cdots & v_i^l \\
   h_1^l & \cdots & h_i^l
  \end{array}\!\!
  \right)}}
  \cdot
  {\small{\left(\!\!
  \begin{array}{ccc}
   a_1 & \cdots & a_i \\
   s_1 & \cdots & s_i \\
   b_1 & \cdots & b_i
  \end{array}\!\!
  \right)}}
  \cdot
  {\small{\left(\!\!
  \begin{array}{ccc}
   g_1^r & \cdots & g_i^r \\
   v_1^r & \cdots & v_i^r \\
   h_1^r & \cdots & h_i^r
  \end{array}\!\!
  \right)}}=
  \\
  &=
  {\small{\left(\!\!
  \begin{array}{ccc}
   g_1^l & \cdots & g_i^l \\
   v_1^l & \cdots & v_i^l \\
   h_1^l & \cdots & h_i^l
  \end{array}\!\!
  \right)}}
  \cdot
  {\small{\left(\!\!
  \begin{array}{ccc}
   a_1 & \cdots & a_i \\
   s_1 & \cdots & s_i \\
   b_1 & \cdots & b_i
  \end{array}\!\!
  \right)}}
  \cdot
  {\small{\left(\!\!
  \begin{array}{ccc}
   b_1             & \cdots & b_i \\
   v_{(1)\kappa}^r & \cdots & v_{(i)\kappa}^r \\
   h_{(1)\kappa}^r & \cdots & h_{(i)\kappa}^r
  \end{array}\!\!
  \right)}}=
  {\small{\left(\!\!
  \begin{array}{ccc}
   g_{(1)\nu}^l                   & \cdots & g_{(i)\nu}^l \\
   v_{(1)\nu}^ls_1v_{(1)\kappa}^r & \cdots & v_{(i)\nu}^ls_iv_{(i)\kappa}^r \\
   h_{(1)\kappa}^r                & \cdots & h_{(i)\kappa}^r
  \end{array}\!\!
  \right)}}{=}
  \\
  &=
  {\small{\left(\!\!\!\!
  \begin{array}{ccc}
   g_1^l                                         & \cdots & g_i^l \\
   v_1^ls_{(1)\nu^{-1}}v_{((1)\kappa)\nu^{-1}}^r & \cdots & v_i^ls_{(i)\nu^{-1}}v_{((i)\kappa)\nu^{-1}}^r \\
   h_{((1)\kappa)\nu^{-1}}^r                     & \cdots & h_{((i)\kappa)\nu^{-1}}^r
  \end{array}\!\!\!\!
  \right)}}{=}
  {\small{\left(\!\!
  \begin{array}{ccc}
   c_1 & \cdots & c_i \\
   v_{(1)\zeta}^ls_{((1)\nu^{-1})\varsigma}v_{(((1)\kappa)\nu^{-1})\varsigma}^r & \cdots & v_{(i)\zeta}^ls_{((1)\nu^{-1})\varsigma}v_{(((i)\kappa)\nu^{-1})\varsigma}^r \\
   h_{(((1)\kappa)\nu^{-1})\varsigma}^r & \cdots &
   h_{(((i)\kappa)\nu^{-1})\varsigma}^r
  \end{array}\!\!
  \right)}}
\end{split}
\end{equation*}
Then the definition of the semigroup $\mathscr{I}_\lambda^n(S)$ implies the following equalities
\begin{equation*}
   d_1=h_{(((1)\kappa)\nu^{-1})\varsigma}^r, \qquad \ldots \; , \qquad
   d_i=h_{(((i)\kappa)\nu^{-1})\varsigma}^r.
\end{equation*}
Now, by the equality $\alpha_S=\gamma_S^l\beta_S\gamma_S^r$ we get that
\begin{equation*}
\begin{split}
  &{\small{\left(\!\!
  \begin{array}{ccc}
   a_1 & \cdots & a_i \\
   s_1 & \cdots & s_i \\
   b_1 & \cdots & b_i
  \end{array}\!\!
  \right)}}
{=}
  {\small{\left(\!\!
  \begin{array}{ccc}
   e_1^l & \cdots & e_{k_l}^l \\
   u_1^l & \cdots & u_{k_l}^l \\
   f_1^l & \cdots & f_{k_l}^l
  \end{array}\!\!
  \right)}}
  {\cdot}
  {\small{\left(\!\!
  \begin{array}{ccc}
   c_1 & \cdots & c_i \\
   t_1 & \cdots & t_i \\
   d_1 & \cdots & d_i
  \end{array}\!\!
  \right)}}
  {\cdot}
  {\small{\left(\!\!
  \begin{array}{ccc}
   e_1^r & \cdots & e_{k_r}^r \\
   u_1^r & \cdots & u_{k_r}^r \\
   f_1^r & \cdots & f_{k_r}^r
  \end{array}\!\!
  \right)}}{=}\\
  &=
  {\small{\left(\!\!
  \begin{array}{ccc}
   e_1^l & \cdots & e_{k_l}^l \\
   u_1^l & \cdots & u_{k_l}^l \\
   f_1^l & \cdots & f_{k_l}^l
  \end{array}\!\!
  \right)}}{\cdot}
  {\small{\left(\!\!\!
  \begin{array}{ccc}
   c_1  & \cdots & c_i \\
   v_{(1)\zeta}^ls_{((1)\nu^{-1})\varsigma}v_{(((1)\kappa)\nu^{-1})\varsigma}^r & \cdots & v_{(i)\zeta}^ls_{((1)\nu^{-1})\varsigma}v_{(((i)\kappa)\nu^{-1})\varsigma}^r \\
   d_1 & \cdots & d_i
  \end{array}\!\!\!
  \right)}}
  {\cdot}
  {\small{\left(\!\!
  \begin{array}{ccc}
   e_1^r & \cdots & e_{k_r}^r \\
   u_1^r & \cdots & u_{k_r}^r \\
   f_1^r & \cdots & f_{k_r}^r
  \end{array}\!\!
  \right)}}=
  \\
  &=
  {\small{\left(\!\!
  \begin{array}{ccc}
   e_{(1)\rho}^l & \cdots & e_{(i)\rho}^l \\
   u_{(1)\rho}^l & \cdots & u_{(i)\rho}^l \\
   c_1           & \cdots & c_i
  \end{array}\!\!
  \right)}}{\cdot}
  {\small{\left(\!\!\!\!
  \begin{array}{ccc}
   c_1 & {\cdots} & c_i \\
   v_{(1)\zeta}^ls_{((1)\nu^{-1})\varsigma}v_{(((1)\kappa)\nu^{-1})\varsigma}^r & {\cdots} & v_{(i)\zeta}^ls_{((1)\nu^{-1})\varsigma}v_{(((i)\kappa)\nu^{-1})\varsigma}^r \\
   d_1 & {\cdots} & d_i
  \end{array}\!\!\!\!
  \right)}}
  {\cdot}
  {\small{\left(\!\!
  \begin{array}{ccc}
   d_1            & \cdots & d_i \\
   u_{(1)\zeta}^r & \cdots & u_{(i)\zeta}^r \\
   f_{(1)\zeta}^r & \cdots & f_{(i)\zeta}^r
  \end{array}\!\!
  \right)}}{=}
  \\
  &=
  {\small{\left(\!\!\!\!
  \begin{array}{ccc}
   e_{(1)\rho}^l & {\cdots} & e_{(i)\rho}^l \\
   u_{(1)\rho}^lv_{(1)\zeta}^l s_{((1)\nu^{-1})\varsigma}v_{(((1)\kappa)\nu^{-1})\varsigma}^r u_{(1)\zeta}^r & {\cdots} & u_{(i)\rho}^lv_{(i)\zeta}^l s_{((1)\nu^{-1})\varsigma}v_{(((i)\kappa)\nu^{-1})\varsigma}^r u_{(1)\zeta}^r\\
   f_{(1)\zeta}^r & {\cdots} & f_{(i)\zeta}^r
  \end{array}\!\!\!\!
  \right)}}
\end{split}
\end{equation*}
which implies the following equalities
\begin{align*}
    &s_1=u_{(1)\sigma}^lv_{(((1)\zeta)\rho^{-1})\sigma}^l s_{((((1)\nu^{-1})\varsigma)\rho^{-1})\sigma}
   v_{(((((1)\kappa)\nu^{-1})\varsigma)\rho^{-1})\sigma}^r u_{(((1)\zeta)\rho^{-1})\sigma}^r\\
   &\qquad\ldots\qquad\ldots\qquad\ldots\qquad\ldots\qquad \ldots\qquad\ldots\qquad\ldots\qquad\ldots\qquad\\
   &s_i=u_{(i)\sigma}^lv_{(((i)\zeta)\rho^{-1})\sigma}^l s_{((((i)\nu^{-1})\varsigma)\rho^{-1})\sigma}
   v_{(((((i)\kappa)\nu^{-1})\varsigma)\rho^{-1})\sigma}^r u_{(((i)\zeta)\rho^{-1})\sigma}^r.
\end{align*}
Hence for the permutation $\pi=\nu^{-1}\varsigma\rho^{-1}\sigma\colon\{1,\ldots,i\}\rightarrow\{1,\ldots,i\}$ we have that $s_1\mathscr{J}t_{(1)\pi}$, $\ldots, s_i\mathscr{J}t_{(i)\pi}$ in $S$.

$(\Leftarrow)$ Suppose that for elements $\alpha_S,\beta_S\in\mathscr{I}_\lambda^n(S)$ there exists a permutation $\sigma\colon\{1,\ldots,i\}\rightarrow\{1,\ldots,i\}$ such that $s_1\mathscr{J}t_{(1)\sigma}$, $\ldots, s_i\mathscr{J}t_{(i)\sigma}$ in $S$. Then there exist $u_1,\ldots,u_i,v_1,\ldots,v_i,x_1,\ldots,x_i,y_1,\ldots,y_i\in S^1$ such that
\begin{equation*}
s_1{=}x_1t_{(1)\sigma}u_{1}, \qquad \ldots \; , \qquad s_i{=}x_it_{(i)\sigma}u_{i}, \qquad t_1{=}y_1s_{(1)\sigma^{-1}}v_1, \qquad \ldots \; , \qquad t_i{=}y_is_{(i)\sigma^{-1}}v_i.
\end{equation*}
Thus, we have that
\begin{equation*}
\begin{split}
  {\small{\left(\!\!
  \begin{array}{ccc}
   a_1 & \cdots & a_i \\
   s_1 & \cdots & s_i \\
   b_1 & \cdots & b_i
  \end{array}\!\!
  \right)}}& =
  {\small{\left(\!\!
  \begin{array}{ccc}
   c_{(1)\sigma}         & \cdots & c_{(i)\sigma} \\
   x_1t_{(1)\sigma}u_{1} & \cdots & x_it_{(i)\sigma}u_{i} \\
   b_{(1)\sigma}         & \cdots & b_{(i)\sigma}
  \end{array}\!\!
  \right)}}=
  {\small{\left(\!\!
  \begin{array}{ccc}
   c_1                                     & \cdots & c_i \\
   x_{(1)\sigma^{-1}}t_1u_{(1)\sigma^{-1}} & \cdots & x_{(i)\sigma^{-1}}t_iu_{(i)\sigma^{-1}} \\
   b_1                                     & \cdots & b_i
  \end{array}\!\!
  \right)}}=  \\
  = &
  {\small{\left(\!\!
  \begin{array}{ccc}
   c_1                & \cdots & c_i \\
   x_{(1)\sigma^{-1}} & \cdots & x_{(i)\sigma^{-1}} \\
   c_1                & \cdots & c_i
  \end{array}\!\!
  \right)}}
  \cdot
  {\small{\left(\!\!
  \begin{array}{ccc}
   c_1 & \cdots & c_i \\
   t_1 & \cdots & t_i \\
   b_1 & \cdots & b_i
  \end{array}\!\!
  \right)}}
  \cdot
  {\small{\left(\!\!
  \begin{array}{ccc}
   b_1                & \cdots & b_i \\
   u_{(1)\sigma^{-1}} & \cdots & u_{(i)\sigma^{-1}} \\
   b_1                & \cdots & b_i
  \end{array}\!\!
  \right)}}
\end{split}
\end{equation*}
and
\begin{equation*}
\begin{split}
   {\small{\left(\!\!
  \begin{array}{ccc}
   c_1 & \cdots & c_i \\
   t_1 & \cdots & t_i \\
   d_1 & \cdots & d_i
  \end{array}\!\!
  \right)}}&=
  {\small{\left(\!\!
  \begin{array}{ccc}
   a_{(1)\sigma^{-1}}       & \cdots & a_{(i)\sigma^{-1}} \\
   y_1s_{(1)\sigma^{-1}}v_1 & \cdots & y_is_{(i)\sigma^{-1}}v_i \\
   d_{(1)\sigma^{-1}}       & \cdots & d_{(i)\sigma^{-1}}
  \end{array}\!\!
  \right)}}=
  {\small{\left(\!\!
  \begin{array}{ccc}
   a_1                           & \cdots & a_i \\
   y_{(1)\sigma}s_1v_{(1)\sigma} & \cdots & y_{(i)\sigma}s_i v_{(i)\sigma} \\
   d_1                           & \cdots & d_i
  \end{array}\!\!
  \right)}}=\\
  = &
  {\small{\left(\!\!
  \begin{array}{ccc}
   a_1           & \cdots & a_i \\
   y_{(1)\sigma} & \cdots & y_{(i)\sigma} \\
   a_1           & \cdots & a_i
  \end{array}\!\!
  \right)}}\cdot
  {\small{\left(\!\!
  \begin{array}{ccc}
   a_1 & \cdots & a_i \\
   s_1 & \cdots & s_i \\
   d_1 & \cdots & d_i
  \end{array}\!\!
  \right)}}\cdot
  {\small{\left(\!\!
  \begin{array}{ccc}
   d_1           & \cdots & d_i \\
   v_{(1)\sigma} & \cdots & v_{(i)\sigma} \\
   d_1           & \cdots & d_i
  \end{array}\!\!
  \right)}},
\end{split}
\end{equation*}
and hence we get that $\alpha_S\mathscr{J}\beta_S$ in $\mathscr{I}_\lambda^n(S)$.
\end{proof}

\begin{remark}\label{remark-2.9}
Proposition~\ref{proposition-2.8}$(iv)$ implies that if if there exists a permutation $\sigma\colon\{1,\ldots,i\}\rightarrow\{1,\ldots,i\}$ such that $s_1\mathscr{H}t_{(1)\sigma}$, $\ldots, s_i\mathscr{H}t_{(i)\sigma}$ in $S$ then $\alpha_S\mathscr{H}\beta_S$ in $\mathscr{I}_\lambda^n(S)$. But Example~\ref{example-2.10} implies that converse statement is not true.
\end{remark}

\begin{example}\label{example-2.10}
Let $\lambda$ be any cardinal $\geqslant 2$ and ${\mathscr{C}}(p,q)$ be the bicyclic monoid. The bicyclic monoid ${\mathscr{C}}(p,q)$ is the semigroup with
the identity $1$ generated by two elements $p$ and $q$ subjected
only to the condition $pq=1$. The distinct elements of
${\mathscr{C}}(p,q)$ are exhibited in the following useful array
\begin{equation*}
\begin{array}{ccccc}
  1      & p      & p^2    & p^3    & \cdots \\
  q      & qp     & qp^2   & qp^3   & \cdots \\
  q^2    & q^2p   & q^2p^2 & q^2p^3 & \cdots \\
  q^3    & q^3p   & q^3p^2 & q^3p^3 & \cdots \\
  \vdots & \vdots & \vdots & \vdots & \ddots \\
\end{array}
\end{equation*}
and the semigroup operation on ${\mathscr{C}}(p,q)$ is determined as
follows:
\begin{equation*}
    q^kp^l\cdot q^mp^n=q^{k+m-\min\{l,m\}}p^{l+n-\min\{l,m\}}.
\end{equation*}
We fix arbitrary distinct elements $a_1$ and $a_1$ of $\lambda$ and put
\begin{equation*}
    \alpha=
    \left(
      \begin{array}{ccc}
        a_1 & a_1 \\
        qp  & q^2p^2 \\
        a_1 & a_1 \\
      \end{array}
    \right)
\qquad \hbox{~and~} \qquad
\beta=
    \left(
      \begin{array}{ccc}
        a_1 & a_2 \\
        qp^2  & q^2p \\
        a_2 & a_1 \\
      \end{array}
    \right).
\end{equation*}
Then we have that
\begin{equation*}
    \alpha=
    \left(
      \begin{array}{ccc}
        a_1 & a_2 \\
        qp^2  & q^2p \\
        a_2 & a_1 \\
      \end{array}
    \right)
    \cdot
    \left(
      \begin{array}{ccc}
        a_1 & a_2 \\
        p  & q \\
        a_2 & a_1 \\
      \end{array}
    \right)
\qquad \hbox{~and~} \qquad
\beta=
\left(
      \begin{array}{ccc}
        a_1 & a_1 \\
        qp  & q^2p^2 \\
        a_1 & a_1 \\
      \end{array}
    \right)
    \cdot
    \left(
      \begin{array}{ccc}
        a_1 & a_2 \\
        p  & q \\
        a_2 & a_1 \\
      \end{array}
    \right)
\end{equation*}
and hence $\alpha\mathscr{R}\beta$ in $\mathscr{I}_\lambda^n(S)$, and similar we have that
\begin{equation*}
    \alpha=
    \left(
      \begin{array}{ccc}
        a_1 & a_2 \\
        p  & q \\
        a_2 & a_1 \\
      \end{array}
    \right)
    \cdot
    \left(
      \begin{array}{ccc}
        a_1 & a_2 \\
        qp^2  & q^2p \\
        a_2 & a_1 \\
      \end{array}
    \right)
\qquad \hbox{~and~} \qquad
\beta=
\left(
      \begin{array}{ccc}
        a_1 & a_2 \\
        p  & q \\
        a_2 & a_1 \\
      \end{array}
    \right)
    \cdot
\left(
      \begin{array}{ccc}
        a_1 & a_1 \\
        qp  & q^2p^2 \\
        a_1 & a_1 \\
      \end{array}
    \right)
\end{equation*}
and hence $\alpha\mathscr{L}\beta$ in $\mathscr{I}_\lambda^n(S)$. Thus $\alpha\mathscr{H}\beta$ in $\mathscr{I}_\lambda^n(S)$, but the elements $qp$ and $q^2p^2$ are not pairwise $\mathscr{H}$-equivalent to $qp^2$ and $q^2p$ for any permutation $\sigma\colon\{1,2\}\rightarrow\{1,2\}$.
\end{example}

Recall \cite{KochWallace1957}, a semigroup $S$ is said to be:
\begin{enumerate}
\item[(a)] \emph{left stable} if for $a, b\in S$, $Sa\subseteq Sab$ implies $Sa=Sab$;
\item[(b)] \emph{right stable} if for $c, d\in S$, $cS\subseteq dcS$ implies $cS=dcS$;
\item[(b)] \emph{stable} if it is both left and right stable.
\end{enumerate}

We observe that in the book \cite{CP} there given other definition of a stable semigroup, and these two notion are distinct. A semigroup stable in the sense of Koch and Wallace is always stable in the sense of the book \cite{CP}, but not conversely (see: \cite{O'Carroll}). For semigroups with an identity element and for regular semigroups these two definitions of stability coincide.

The following proposition states that the construction of the semigroup $\mathscr{I}_\lambda^n(S)$ preserves left an right stabilities.

\begin{proposition}\label{proposition-2.11}
For every semigroup $S$, any non-zero cardinal $\lambda$ and any positive integer $n\leqslant\lambda$ the following statements hold:
\begin{itemize}
  \item[$(i)$] $\mathscr{I}_\lambda^n(S)$ is right stable if and only if so is $S$;
  \item[$(ii)$] $\mathscr{I}_\lambda^n(S)$ is left stable if and only if so is $S$;
  \item[$(iii)$] $\mathscr{I}_\lambda^n(S)$ is stable if and only if so is $S$.
\end{itemize}
\end{proposition}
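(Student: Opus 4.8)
The plan is to prove $(i)$ in full, to deduce $(ii)$ from it by the evident left–right dual argument — formally, by applying $(i)$ to $S^{\mathrm{op}}$ and transporting along the anti-isomorphism $\left(\begin{smallmatrix}x_1&\cdots&x_k\\ s_1&\cdots&s_k\\ y_1&\cdots&y_k\end{smallmatrix}\right)\mapsto\left(\begin{smallmatrix}y_1&\cdots&y_k\\ s_1&\cdots&s_k\\ x_1&\cdots&x_k\end{smallmatrix}\right)$ between $\mathscr{I}_\lambda^n(S)$ and $\mathscr{I}_\lambda^n(S^{\mathrm{op}})$, together with the fact that a semigroup is left stable precisely when its opposite is right stable — and then $(iii)$ is immediate since a semigroup is stable iff it is both left and right stable. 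For the necessity half of $(i)$, assume $\mathscr{I}_\lambda^n(S)$ is right stable and take $c,d\in S$ with $cS\subseteq dcS$. Fixing $a\in\lambda$ and putting $\gamma_S=\left(\begin{smallmatrix}a\\ c\\ a\end{smallmatrix}\right)$, $\delta_S=\left(\begin{smallmatrix}a\\ d\\ a\end{smallmatrix}\right)$, one has $\delta_S\gamma_S=\left(\begin{smallmatrix}a\\ dc\\ a\end{smallmatrix}\right)$, and a direct description of principal right ideals gives $\gamma_S\mathscr{I}_\lambda^n(S)=\{0\}\cup\{\left(\begin{smallmatrix}a\\ e\\ y\end{smallmatrix}\right)\colon e\in cS,\ y\in\lambda\}$ and the analogous formula with $dcS$ for $\delta_S\gamma_S$. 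Hence $cS\subseteq dcS$ yields $\gamma_S\mathscr{I}_\lambda^n(S)\subseteq(\delta_S\gamma_S)\mathscr{I}_\lambda^n(S)$; right stability forces equality, and reading the reverse inclusion back through these formulas gives $dcS\subseteq cS$, so $cS=dcS$ and $S$ is right stable.

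For the sufficiency half of $(i)$, assume $S$ is right stable and let $\gamma_S\mathscr{I}_\lambda^n(S)\subseteq(\delta_S\gamma_S)\mathscr{I}_\lambda^n(S)$. The case $\gamma_S=0$ is trivial, and the case $\gamma_S\neq0=\delta_S\gamma_S$ cannot occur because $\gamma_S\neq0$ implies $\gamma_S\mathscr{I}_\lambda^n(S)\neq\{0\}$; so assume $\gamma_S$ and $\delta_S\gamma_S$ are nonzero and write $\gamma_S=\left(\begin{smallmatrix}e_1&\cdots&e_i\\ u_1&\cdots&u_i\\ f_1&\cdots&f_i\end{smallmatrix}\right)$. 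Every element of $\gamma_S\mathscr{I}_\lambda^n(S)$ has domain contained in $\{e_1,\dots,e_i\}$, and those with domain equal to $\{e_1,\dots,e_i\}$ are exactly the elements $\left(\begin{smallmatrix}e_1&\cdots&e_i\\ u_1s_1&\cdots&u_is_i\\ y_1&\cdots&y_i\end{smallmatrix}\right)$ with $s_l\in S$ and the $y_l\in\lambda$ distinct. Since right multiplication cannot raise rank, the inclusion forces $\operatorname{dom}(\delta_S\gamma_S)=\{e_1,\dots,e_i\}$, and after relabelling the columns of $\delta_S$ one may write $\delta_S\gamma_S=\left(\begin{smallmatrix}e_1&\cdots&e_i\\ w_1u_{\pi(1)}&\cdots&w_iu_{\pi(i)}\\ f_{\pi(1)}&\cdots&f_{\pi(i)}\end{smallmatrix}\right)$ for suitable $w_1,\dots,w_i\in S$ and a permutation $\pi$ of $\{1,\dots,i\}$. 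Feeding the elements listed above into the inclusion and matching columns with top entry $e_l$ then yields, for each $l\in\{1,\dots,i\}$, the inclusion $u_lS\subseteq w_lu_{\pi(l)}S$ in $S$.

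The heart of the matter — and the main obstacle — is to upgrade these inclusions to the equalities $w_lu_{\pi(l)}S=u_lS$ for every $l$, which is exactly what is needed to check the reverse inclusion $(\delta_S\gamma_S)\mathscr{I}_\lambda^n(S)\subseteq\gamma_S\mathscr{I}_\lambda^n(S)$, after which equality holds and $\mathscr{I}_\lambda^n(S)$ is right stable. The subtlety is that, because $\pi$ may shuffle the indices, the conditions $u_lS\subseteq w_lu_{\pi(l)}S$ are not individually of the shape $cS\subseteq dcS$, so right stability of $S$ is not directly applicable. I would resolve this by running along a cycle $l_0\to\pi(l_0)\to\pi^2(l_0)\to\cdots\to l_0$ of $\pi$: composing the inclusions around the cycle gives $u_{l_0}S\subseteq W\,u_{l_0}S$, where $W$ is the product of the corresponding $w$'s taken in cyclic order; now right stability of $S$, applied with $c=u_{l_0}$ and $d=W$, gives $u_{l_0}S=W\,u_{l_0}S$, and sandwiching $w_{l_0}u_{\pi(l_0)}S$ between $u_{l_0}S$ and $W\,u_{l_0}S$ forces $w_{l_0}u_{\pi(l_0)}S=u_{l_0}S$. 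Since every index lies in some cycle, $w_lu_{\pi(l)}S=u_lS$ for all $l$, and the reverse inclusion follows by rewriting each entry $w_lu_{\pi(l)}s_l$ of an arbitrary element of $(\delta_S\gamma_S)\mathscr{I}_\lambda^n(S)$ as the corresponding $u_ls_l'$. This gives $\gamma_S\mathscr{I}_\lambda^n(S)=(\delta_S\gamma_S)\mathscr{I}_\lambda^n(S)$, completing the proof of $(i)$, and hence of $(ii)$ and $(iii)$.
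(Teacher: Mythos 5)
Your proposal is correct and follows essentially the same route as the paper: reduce necessity to rank-one elements $\left(\begin{smallmatrix}a\\ s\\ a\end{smallmatrix}\right)$, and for sufficiency extract a permutation $\sigma$ and the columnwise inclusions $s_lS\subseteq t_{(l)\sigma^{-1}}s_{(l)\sigma^{-1}}S$, then iterate around a cycle of $\sigma$ (the paper phrases this as choosing $j$ with $\sigma^j=\mathrm{id}$ and chaining the inclusions) so that right stability of $S$ applied once per cycle yields the individual equalities and hence the reverse inclusion. The cycle-composition step you single out as the heart of the matter is exactly the paper's argument, and your derivation of $(ii)$ by duality and of $(iii)$ by combining $(i)$ and $(ii)$ also matches.
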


\begin{proof}
$(i)$ $(\Leftarrow)$ Suppose the semigroup $S$ is right stable and assume that $\alpha_S=
  {\small{\left(%
  \begin{array}{ccc}
   a_1 & \cdots & a_i \\
   s_1 & \cdots & s_i \\
   b_1 & \cdots & b_i
  \end{array}%
  \right)}}
  $
and $\beta_S=
  {\small{\left(%
  \begin{array}{ccc}
   c_1 & \cdots & c_k \\
   t_1 & \cdots & t_k \\
   d_1 & \cdots & d_k
  \end{array}%
  \right)}}
$ are elements of the semigroup $\mathscr{I}_\lambda^n(S)$ such that $\alpha_S\mathscr{I}_\lambda^n(S)\subseteq \beta_S\alpha_S\mathscr{I}_\lambda^n(S)$. Then the above inclusion and the definition of the semigroup operation on $\mathscr{I}_\lambda^n(S)$ imply that $i\leqslant k$ and the following inclusion holds
\begin{equation*}
    \{a_1, \ldots, a_i\}\subseteq\{c_1, \ldots, c_k\}\cap \{d_1, \ldots, d_k\}.
\end{equation*}
Without loss of generality we may assume that $d_1=a_1$, $\ldots, d_i=a_i$. Then the inclusion $\alpha_S\mathscr{I}_\lambda^n(S)\subseteq \beta_S\alpha_S\mathscr{I}_\lambda^n(S)$ implies that there exists a permutation $\sigma\colon\{1,\ldots,i\}\rightarrow\{1,\ldots,i\}$ such that $c_1=a_{(1)\sigma}$, $\ldots, c_i=a_{(i)\sigma}$. Hence by the definition of the semigroup operation of $\mathscr{I}_\lambda^n(S)$ we get that
\begin{equation*}
\begin{split}
  \beta_S&\alpha_S\mathscr{I}_\lambda^n(S)
  {=}
  {\small{\left(\!\!
  \begin{array}{ccc}
   c_1 & \cdots & c_k \\
   t_1 & \cdots & t_k \\
   d_1 & \cdots & d_k
  \end{array}\!\!
  \right)}}
  {\cdot}
  {\small{\left(\!\!
  \begin{array}{ccc}
   a_1 & \cdots & a_i \\
   s_1 & \cdots & s_i \\
   b_1 & \cdots & b_i
  \end{array}\!\!
  \right)}}{\cdot}\mathscr{I}_\lambda^n(S){=}
  {\small{\left(\!\!%
  \begin{array}{cccccc}
   c_1 & \cdots & c_i & c_{i+1} & \cdots & c_k\\
   t_1 & \cdots & t_i & t_{i+1} & \cdots & t_k\\
   d_1 & \cdots & d_i & d_{i+1} & \cdots & d_k
  \end{array}\!\!%
  \right)}}
  {\cdot}
  {\small{\left(\!\!%
  \begin{array}{ccc}
   a_1 & \cdots & a_i \\
   s_1 & \cdots & s_i \\
   b_1 & \cdots & b_i
  \end{array}\!\!%
  \right)}}
  {\cdot}\mathscr{I}_\lambda^n(S){=}\\
  = &
  {\small{\left(\!\!%
  \begin{array}{cccccc}
   a_{(1)\sigma} & \cdots & a_{(i)\sigma} & c_{i+1} & \cdots & c_k\\
   t_1           & \cdots & t_i           & t_{i+1} & \cdots & t_k\\
   a_1           & \cdots & a_i           & d_{i+1} & \cdots & d_k
  \end{array}\!\!%
  \right)}}
  {\cdot}
  {\small{\left(\!\!%
  \begin{array}{ccc}
   a_1 & \cdots & a_i \\
   s_1 & \cdots & s_i \\
   b_1 & \cdots & b_i
  \end{array}\!\!%
  \right)}}
  {\cdot}\mathscr{I}_\lambda^n(S){=}
  {\small{\left(\!\!%
  \begin{array}{ccc}
   a_{(1)\sigma} & \cdots & a_{(i)\sigma} \\
   t_1           & \cdots & t_i           \\
   a_1           & \cdots & a_i
  \end{array}\!\!%
  \right)}}
  {\cdot}
  {\small{\left(\!\!%
  \begin{array}{ccc}
   a_1 & \cdots & a_i \\
   s_1 & \cdots & s_i \\
   b_1 & \cdots & b_i
  \end{array}\!\!%
  \right)}}
  {\cdot}\mathscr{I}_\lambda^n(S)
  {=}
  \\
%
  = &
  {\small{\left(\!\!%
  \begin{array}{cccc}
   a_{(1)\sigma} & \cdots & a_{(i)\sigma} \\
   t_1s_1        & \cdots & t_is_i        \\
   b_1           & \cdots & b_i
  \end{array}\!\!%
  \right)}}\cdot \mathscr{I}_\lambda^n(S){=}
  {\small{\left(\!\!%
  \begin{array}{ccc}
   a_1                                 & \cdots & a_i \\
   t_{(1)\sigma^{-1}}s_{(1)\sigma^{-1}}& \cdots & t_{(i)\sigma^{-1}}s_{(i)\sigma^{-1}}\\
   b_{(1)\sigma^{-1}}                  & \cdots & b_{(i)\sigma^{-1}}
  \end{array}\!\!%
  \right)}}\cdot \mathscr{I}_\lambda^n(S)=\\
  = & \{0\}\cup\bigcup \left\{ [t_{(1)\sigma^{-1}}s_{(1)\sigma^{-1}}S, \ldots,t_{(i)\sigma^{-1}}s_{(i)\sigma^{-1}}S]^{(a_1,\ldots,a_i)} _{(p_1,\ldots,p_i)} \colon p_1,\ldots,p_i\in\lambda \right\}\cup\\
    & \quad \cup\bigcup\Big\{ [t_{(l_1)\sigma^{-1}}s_{(l_1)\sigma^{-1}}S, \ldots,t_{(l_{i-1})\sigma^{-1}}s_{(l_{i-1})\sigma^{-1}}S]^{(l_1,\ldots,l_{i-1})} _{(p_1,\ldots,p_{i-1})} \colon l_1,\ldots,l_{i-1}\hbox{~are distinct elements}\\
    & \qquad \qquad \hbox{of~}\{1,\ldots,i\} \hbox{~and~} p_1,\ldots,p_{i-1}\in\lambda\Big\}\cup\cdots\cup\\
    & \quad \cup\bigcup\Big\{ [t_{(l)\sigma^{-1}}s_{(l)\sigma^{-1}}S]^{(l)} _{(p)} \colon l\in\{1,\ldots,i\} \hbox{~and~} p\in\lambda\Big\}
\end{split}
\end{equation*}
and
\begin{equation*}
\begin{split}
  \alpha_S&\mathscr{I}_\lambda^n(S)=
  {\small{\left(\!\!%
  \begin{array}{cccc}
   a_1 & \cdots & a_i \\
   s_1 & \cdots & s_i \\
   b_1 & \cdots & b_i
  \end{array}\!\!%
  \right)}}\cdot\mathscr{I}_\lambda^n(S)=
  \{0\}\cup\bigcup \left\{ [s_1S,\ldots,s_iS]^{(a_1,\ldots,a_i)} _{(p_1,\ldots,p_i)} \colon p_1,\ldots,p_i\in\lambda \right\}\cup\\
    & \cup\bigcup\Big\{ [s_{l_1}S, \ldots,s_{l_{i-1}}S]^{(l_1,\ldots,l_{i-1})} _{(p_1,\ldots,p_{i-1})} \colon l_1,\ldots,l_{i-1}\hbox{~are distinct elements of~}\{1,\ldots,i\}
    \hbox{~and~} p_1,\ldots,p_{i-1}\in\lambda\Big\}\cup\cdots\cup\\
    & \cup\bigcup\Big\{ [s_{l}S]^{(l)} _{(p)} \colon l\in\{1,\ldots,i\} \hbox{~and~} p\in\lambda\Big\}.
\end{split}
\end{equation*}
Hence, the inclusion $\alpha_S\mathscr{I}_\lambda^n(S)\subseteq \beta_S\alpha_S\mathscr{I}_\lambda^n(S)$ and semigroup operations of $\mathscr{I}_\lambda^n(S)$ and $S$ imply that $s_{l}S\subseteq t_{(l)\sigma^{-1}}s_{(l)\sigma^{-1}}S$, for every $l\in\{1,\ldots,i\}$. Since the semigroup of all permutations of a finite set is finite, we conclude that there exists a positive integer $j$ such that $\sigma^j\colon\{1,\ldots,i\}\rightarrow \{1,\ldots,i\}$ is an identity map and therefore we get that $\sigma^{j-1}=\sigma$. This implies that for every $l\in\{1,\ldots,i\}$ we have that
\begin{equation*}
\begin{split}
  s_{l}S\subseteq t_{(l)\sigma^{-1}}s_{(l)\sigma^{-1}}S\subseteq & \; t_{(l)\sigma^{-1}}t_{(l)\sigma^{-2}}s_{(l)\sigma^{-2}}S\subseteq\cdots\subseteq t_{(l)\sigma^{-1}}t_{(l)\sigma^{-2}}\cdots t_{(l)\sigma^{-j+1}}s_{(l)\sigma^{-j+1}}S= \\
    =&\; t_{(l)\sigma^{-1}}t_{(l)\sigma^{-2}}\cdots t_{l}s_{l}S.
\end{split}
\end{equation*}
Then the right stability of the semigroup $S$ implies the equality $s_{l}S=t_{(l)\sigma^{-1}}t_{(l)\sigma^{-2}}\cdots t_{l}s_{l}S$ and hence we have that $s_{l}S=t_{(l)\sigma^{-1}}s_{(l)\sigma^{-1}}S$, for every $l\in\{1,\ldots,i\}$. Then the inclusion $\alpha_S\mathscr{I}_\lambda^n(S)\subseteq \beta_S\alpha_S\mathscr{I}_\lambda^n(S)$ and above formulae imply the following equality $\alpha_S\mathscr{I}_\lambda^n(S)= \beta_S\alpha_S\mathscr{I}_\lambda^n(S)$, and hence the semigroup $\mathscr{I}_\lambda^n(S)$ is right stable.

$(\Rightarrow)$ Suppose that the semigroup $\mathscr{I}_\lambda^n(S)$ is right stable and $sS\subseteq tsS$ for $s, t\in S$. We fix an arbitrary $a\in \lambda$ and put $\alpha_S=
{\small{\left(%
  \begin{array}{c}
   a\\
   s\\
   a
  \end{array}%
  \right)}}$
and $\beta_S=
{\small{\left(%
  \begin{array}{c}
   a\\
   t\\
   a
  \end{array}%
  \right)}}$.
Hence by the definition of the semigroup operation of $\mathscr{I}_\lambda^n(S)$ we get that
\begin{equation*}
    \alpha_S\mathscr{I}_\lambda^n(S)=
    {\small{\left(%
  \begin{array}{c}
   a\\
   s\\
   a
  \end{array}%
  \right)}}
  \mathscr{I}_\lambda^n(S)=\{0\}\cup \bigcup\left\{ [sS]^{(a)} _{(p)} \colon p\in\lambda\right\}
\end{equation*}
and
\begin{equation*}
    \beta_S\alpha_S\mathscr{I}_\lambda^n(S)=
    {\small{\left(%
  \begin{array}{c}
   a\\
   t\\
   a
  \end{array}%
  \right)}}
    {\small{\left(%
  \begin{array}{c}
   a\\
   s\\
   a
  \end{array}%
  \right)}}
  \mathscr{I}_\lambda^n(S)=
  {\small{\left(%
  \begin{array}{c}
   a\\
   ts\\
   a
  \end{array}%
  \right)}}
  \mathscr{I}_\lambda^n(S)=\{0\}\cup \bigcup\left\{ [tsS]^{(a)} _{(p)} \colon p\in\lambda\right\},
\end{equation*}
and hence by the inclusion $sS\subseteq tsS$ we have that $\alpha_S\mathscr{I}_\lambda^n(S)\subseteq \beta_S\alpha_S\mathscr{I}_\lambda^n(S)$. Now the right stability of $\mathscr{I}_\lambda^n(S)$ implies the following equality $\alpha_S\mathscr{I}_\lambda^n(S)=\beta_S\alpha_S\mathscr{I}_\lambda^n(S)$. This implies $[sS]^{(a)} _{(p)}=[tsS]^{(a)} _{(p)}$ in $\mathscr{I}_\lambda^n(S)$ for every $p\in\lambda$, and hence $sS=tsS$.

The proof of statement $(ii)$ is dual to statement $(i)$.

$(iii)$ follows from statements $(i)$ and $(ii)$.
\end{proof}

\section{On semigroups with a tight ideal series}

Fix an arbitrary positive integer $m$ and any $p\in\{0,\ldots,m\}$. Let $A$ be a non-empty set and let $B$ be a non-empty proper subset of $A$. By $\left[B\subset A\right]^m_{p}$ we denote all elements $(x_1,\ldots,x_m)$ of the power $A^m$ which satisfy the following property: \emph{at most $p$ coordinates of $(x_1,\ldots,x_m)$ belong to $A\setminus B$.} It is obvious that $\left[B\subset A\right]^m_{m}=A^m$ any positive integer $m$, any non-empty set $A$ and any non-empty proper subset $B$ of $A$.

\smallskip

The above definition implies the following two lemmas.

\begin{lemma}\label{lemma-4.1}
Let $m$ be an arbitrary positive integer and $p\in\{1,\ldots,m\}$. Let $A$ be a non-empty set and let $B$ be a non-empty proper subset of $A$. Then the set $\left[B\subset A\right]^m_{p}\setminus \left[B\subset A\right]^m_{p-1}$ consists of all elements $(x_1,\ldots,x_m)$ of the power $A^m$ such that exactly $p$ coordinates of $(x_1,\ldots,x_m)$ belong to $A\setminus B$.
\end{lemma}

\begin{lemma}\label{lemma-4.2}
Let $m$ be an arbitrary positive integer and $p\in\{0, 1,\ldots,m\}$. Let $S$ be a semigroup, $A$  and  $B$ be ideals in $S$ such that $B\subset A$. Then $\left[B\subset A\right]^m_{p}$ is an ideal of the direct power $S^m$.
\end{lemma}

An subset $D$ of a semigroup $S$ is said to be $\omega$-unstable if $D$ is infinite and $aB\cup Ba\nsubseteq D$ for any $a\in D$ and any infinite subset $B\subseteq D$.

\begin{definition}[{\cite{GutikLawsonRepov2009}}]\label{definition-4.3}
An \emph{ideal series} (see, for example, \cite{CP}) for a semigroup $S$ is a chain
of ideals
\begin{equation*}
I_0\subseteq I_1 \subseteq I_2 \subseteq\cdots\subseteq I_n = S.
\end{equation*}
We call the ideal series \emph{tight} if $I_0$ is a finite set and $D_k=I_k\setminus I_{k-1}$ is an $\omega$-unstable subset for each $k=1,\ldots,n$.
\end{definition}

It is obvious that for every infinite cardinal $\lambda$ and any positive integer $n$ the semigroup $\mathscr{I}_\lambda^n$ has a tight ideal series.
A finite direct product of semigroups with tight ideal series is a semigroup with a tight ideal series and a homomorphic image of a semigroup with a tight ideal series with finite preimages is a semigroup with a tight ideal series too \cite{GutikLawsonRepov2009}.

\medskip

A subset $D$ of a semigroup $S$ is said to be \emph{strongly $\omega$-unstable} if $D$ is infinite and $aB\cup Bb\nsubseteq D$ for any $a,b\in D$ and any infinite subset $B\subseteq D$. It is obvious that a subset $D$ of a semigroup $S$ is strongly $\omega$-unstable then $D$ is $\omega$-unstable.

\begin{definition}\label{definition-4.5}
We call the ideal series
$
I_0\subseteq I_1 \subseteq I_2 \subseteq\cdots\subseteq I_n = S
$
\emph{strongly tight} if $I_0$ is a finite set and $D_k=I_k\setminus I_{k-1}$ is a strongly $\omega$-unstable subset for each $k=1,\ldots,n$.
\end{definition}

An example of a semigroup with a strongly tight ideal series gives the following proposition.

\begin{proposition}\label{proposition-4.6}
Let $\lambda$ be any infinite cardinal and $n$ be any positive integer. Then
\begin{equation*}
I_0=\{0\}\subseteq I_1=\mathscr{I}_\lambda^1 \subseteq I_2=\mathscr{I}_\lambda^2 \subseteq\cdots\subseteq I_n=\mathscr{I}_\lambda^n,
\end{equation*}
is the strongly tight ideal series in the semigroup $\mathscr{I}_\lambda^n$.
\end{proposition}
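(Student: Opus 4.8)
The plan is to verify the three requirements in Definition~\ref{definition-4.5} for the displayed chain. That each $I_k=\mathscr{I}_\lambda^k$ is an ideal of $\mathscr{I}_\lambda^n$ was already noted in the introduction: a product in $\mathscr{I}_\lambda$ cannot have rank larger than either of its factors, so a product with a factor of rank $\leqslant k$ again lies in $\mathscr{I}_\lambda^k$. Also $I_0=\{0\}$ is a one-element set. Hence the whole content of the statement is that for each $k\in\{1,\ldots,n\}$ the difference set
\[
D_k=I_k\setminus I_{k-1}=\{\alpha\in\mathscr{I}_\lambda\colon\operatorname{rank}\alpha=k\}
\]
is strongly $\omega$-unstable.

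First I would record the elementary rank formula: for $\alpha,\gamma\in\mathscr{I}_\lambda$ one has $\operatorname{dom}(\alpha\gamma)=\{x\in\operatorname{dom}\alpha\colon x\alpha\in\operatorname{dom}\gamma\}$, which $\alpha$ maps bijectively onto $\operatorname{ran}\alpha\cap\operatorname{dom}\gamma$, and since $\alpha\gamma$ is injective this gives $\operatorname{rank}(\alpha\gamma)=|\operatorname{ran}\alpha\cap\operatorname{dom}\gamma|$; dually $\operatorname{rank}(\gamma\beta)=|\operatorname{ran}\gamma\cap\operatorname{dom}\beta|$. Consequently, when $\operatorname{rank}\alpha=\operatorname{rank}\gamma=\operatorname{rank}\beta=k$, membership $\alpha\gamma\in D_k$ is equivalent to $\operatorname{dom}\gamma=\operatorname{ran}\alpha$, and $\gamma\beta\in D_k$ is equivalent to $\operatorname{ran}\gamma=\operatorname{dom}\beta$. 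I would also note that $D_k$ is infinite, since $\lambda$ is: for instance the partial identities with domain $\{1,\ldots,k-1\}\cup\{x\}$, for $x\in\lambda\setminus\{1,\ldots,k-1\}$, are pairwise distinct elements of $D_k$.

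Then, given $\alpha,\beta\in D_k$ and an infinite subset $B\subseteq D_k$, I would argue by contradiction: if $\alpha B\cup B\beta\subseteq D_k$, then by the above every $\gamma\in B$ has $\operatorname{dom}\gamma=\operatorname{ran}\alpha$ and $\operatorname{ran}\gamma=\operatorname{dom}\beta$, i.e.\ each $\gamma\in B$ is a bijection of the fixed $k$-element set $\operatorname{ran}\alpha$ onto the fixed $k$-element set $\operatorname{dom}\beta$. There are only $k!$ such partial bijections, so $B$ would be finite, a contradiction. Hence $\alpha B\cup B\beta\nsubseteq D_k$, so $D_k$ is strongly $\omega$-unstable, and the chain is a strongly tight ideal series. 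The one genuinely substantive point is this last observation — that forcing a product to remain in $D_k$ after even a single multiplication pins the domain (or range) of the other factor to a specific $k$-element set, which collapses any infinite $B$ to a finite one; everything else is routine bookkeeping with the rank formula.
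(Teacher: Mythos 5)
Your proof is correct and follows essentially the same route as the paper's: both arguments rest on the observation that for rank-$k$ factors the product stays in $D_k=\mathscr{I}_\lambda^k\setminus\mathscr{I}_\lambda^{k-1}$ only when the relevant range coincides with the relevant domain, so an infinite $B$ cannot have all its domains and ranges simultaneously pinned to the fixed finite sets $\operatorname{ran}\alpha$ and $\operatorname{dom}\beta$. The paper states this in the contrapositive form (at least one of the families $\{\operatorname{dom}\gamma\colon\gamma\in B\}$, $\{\operatorname{ran}\gamma\colon\gamma\in B\}$ is infinite, which forces some product out of $D_k$), while you argue by contradiction via the $k!$ bound; the two are equivalent and equally complete.
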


\begin{proof}
The definition of the semigroup $\mathscr{I}_\lambda^n$ implies that $I_0\subseteq I_1 \subseteq I_2 \subseteq\cdots\subseteq I_n$ is an ideal series in $\mathscr{I}_\lambda^n$.

Fix an arbitrary integer $i=1,\ldots,n$. For any infinite subset $B$ of $\mathscr{I}_\lambda^i\setminus \mathscr{I}_\lambda^{i-1}$ at least one of the following families of sets
\begin{equation*}
  \mathfrak{d}(B)=\left\{\operatorname{dom}\gamma\colon \gamma\in B\right\} \qquad \hbox{or} \qquad \mathfrak{r}(B)=\left\{\operatorname{ran}\gamma\colon \gamma\in B\right\}
\end{equation*}
is infinite.
Then the definition of the semigroup operation in $\mathscr{I}_\lambda^n$ implies that $\alpha B\nsubseteq\mathscr{I}_\lambda^i\setminus \mathscr{I}_\lambda^{i-1}$ in the case when the set $\mathfrak{d}(B)$ is infinite, and $B\beta\nsubseteq\mathscr{I}_\lambda^i\setminus \mathscr{I}_\lambda^{i-1}$ in the case when the set $\mathfrak{r}(B)$ is infinite, for any $\alpha,\beta\in\mathscr{I}_\lambda^i\setminus \mathscr{I}_\lambda^{i-1}$.
\end{proof}

Later for an arbitrary non-empty set $A$, any positive integer $n$ and any $i\in\{1,\ldots, n\}$ by $\pi_i\colon A^n\to A$, $(a_1,\ldots,a_n)\mapsto a_i$ we shall denote the projection on $i$-th factor of the power $A^n$.

\begin{proposition}\label{proposition-4.6a}
Let $n$ be a positive integer $\geqslant 2$ and let $I_0\subseteq I_1 \subseteq I_2 \subseteq\cdots\subseteq I_m = S$  be the strongly tight ideal series for a semigroup $S$. Then the following  series
\begin{equation}\label{eq-4.1}
\begin{split}
  I_0^n &\subseteq [I_0\subset I_1]^n_{1} \subseteq [I_0\subset I_1]^n_{2}\subseteq \cdots \subseteq [I_0\subset I_1]^n_{n-1} \subseteq [I_0\subset I_1]^n_{n}=I^n_1\subseteq\\
        & \subseteq  [I_1\subset I_2]^n_{1} \subseteq [I_1\subset I_2]^n_{2}\subseteq \cdots \subseteq [I_1\subset I_2]^n_{n-1} \subseteq [I_1\subset I_2]^n_{n}=I^n_2\subseteq\\
        & \subseteq  \qquad \cdots \qquad  \cdots \qquad  \subseteq\\
        & \subseteq  [I_{m-1}\subset I_m]^n_{1} \subseteq [I_{m-1}\subset I_m]^n_{2}\subseteq \cdots \subseteq [I_{m-1}\subset I_m]^n_{n-1} \subseteq [I_{m-1}\subset I_m]^n_{n}=I^n_m=S^n
\end{split}
\end{equation}
is a strongly tight ideal series for the direct power $S^n$.
\end{proposition}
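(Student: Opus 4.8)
The plan is to verify the two requirements of Definition~\ref{definition-4.5} for the series~(\ref{eq-4.1}): first that it is a genuine ideal series for $S^n$, and second that each consecutive difference is strongly $\omega$-unstable, while the bottom term $I_0^n$ is finite. The finiteness of $I_0^n$ is immediate since $I_0$ is finite. That each term is an ideal of $S^n$ follows from Lemma~\ref{lemma-4.2}: each set $[I_{k-1}\subset I_k]^n_p$ is an ideal of $S^n$ because $I_{k-1}\subset I_k$ are ideals of $S$; and the inclusions inside each block and the passage $I^n_k=[I_{k-1}\subset I_k]^n_n\subseteq[I_k\subset I_{k+1}]^n_1$ between blocks are set-theoretic and routine (the latter holds because any tuple with all coordinates in $I_k$ has at most one coordinate—indeed zero—in $I_{k+1}\setminus I_k$). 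The only slightly delicate point in the chain is the very first inclusion $I_0^n\subseteq[I_0\subset I_1]^n_1$, which requires $I_0$ to be a proper subset of $I_1$; if $I_0=I_1$ one simply deletes the redundant block, so we may harmlessly assume all inclusions $I_{k-1}\subset I_k$ are proper (this is where the hypothesis $n\geqslant 2$ is irrelevant and genuinely the proper-inclusion normalization matters).

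The heart of the argument is strong $\omega$-unstability of a typical difference term
\[
D=[I_{k-1}\subset I_k]^n_{p}\setminus[I_{k-1}\subset I_k]^n_{p-1}
\]
for $p\in\{1,\ldots,n\}$ (here we set $[I_{k-1}\subset I_k]^n_0:=I_{k-1}^n$ when $p=1$). By Lemma~\ref{lemma-4.1}, $D$ is the set of tuples with \emph{exactly} $p$ coordinates in $D_k:=I_k\setminus I_{k-1}$. Fix $a=(a_1,\ldots,a_n),b=(b_1,\ldots,b_n)\in D$ and an infinite subset $B\subseteq D$; I must produce an element of $aB\cup Bb$ lying outside $D$. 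The key observation is a pigeonhole on which coordinates are "active": for each $c=(c_1,\ldots,c_n)\in B$ the set $\mathrm{supp}(c):=\{i: c_i\in D_k\}$ has size exactly $p$, so there are only finitely many possible support sets; since $B$ is infinite, some single support set $E\subseteq\{1,\ldots,n\}$, $|E|=p$, is realized by an infinite subset $B'\subseteq B$. Now fix $j\in E$ (possible since $p\geqslant 1$) and consider the projection $\pi_j(B')\subseteq D_k$. If $\pi_j(B')$ is infinite, then since $D_k$ is strongly $\omega$-unstable in $S$ and $a_j\in S$ (regardless of whether $a_j\in D_k$), there is an infinite subset on which $a_j\pi_j(B')\nsubseteq D_k$ — more precisely, strong $\omega$-unstability gives $a_jB_0\cup B_0 b_j\nsubseteq D_k$ for any infinite $B_0\subseteq D_k$, but one must be slightly careful: $a_j$ or $b_j$ may not lie in $D_k$.

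To handle that last wrinkle I would argue as follows. Pick $j\in E$; the tuple $a\in D$ also has exactly $p$ active coordinates, so its support $\mathrm{supp}(a)$ has size $p$ as well, but need not equal $E$. Two cases. If $\pi_j(B')$ is infinite, apply strong $\omega$-unstability of $D_k$ in $S$ to the infinite set $B_0=\pi_j(B')$ together with the elements $a_j,b_j\in S$ — here we should invoke the correct asymmetric form: strong $\omega$-unstability says $xB_0\cup B_0 y\nsubseteq D_k$ for all $x,y\in D_k$; to feed in arbitrary $a_j,b_j\in S$ we instead choose $x,y$ from $B_0$ itself (legitimate since $B_0\subseteq D_k$ is infinite, in particular nonempty), which already yields some product of two elements of $B'$ that leaves $D_k$ in coordinate $j$; composing two elements of $B$, not $aB$ or $Bb$, is not what we want, so instead the clean route is: since $D_k=I_k\setminus I_{k-1}$ and $I_{k-1}$ is an ideal, for any $s\in S$ and $d\in D_k$, the product $sd$ lies in $I_k$, and $sd\in D$-coordinate-wise fails exactly when $sd\in I_{k-1}$. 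Thus I use the original (weak) $\omega$-unstability built into the tight series for $S$ to find an infinite $B_0\subseteq\pi_j(B')$ and an element $a_j$ (or $b_j$) with $a_jB_0\nsubseteq D_k$; picking $c\in B'$ with $\pi_j(c)$ in the offending part gives $\pi_j(ac)\in I_{k-1}$, hence $ac$ has at most $p-1$ active coordinates, so $ac\notin D$, as required. If instead $\pi_j(B')$ is finite for every $j\in E$, then since $|E|=p$ is fixed and $B'$ is infinite, some inactive coordinate $\ell\notin E$ must have $\pi_\ell(B')$ infinite with values in $I_{k-1}$; but then $\pi_\ell(ac)\in I_{k-1}$ automatically and one instead needs an active coordinate of $a$ or $b$ to be pushed out — this sub-case is handled symmetrically by working with $b$ and the coordinates in $\mathrm{supp}(b)$, using that the set of tuples $c\in B'$ can be further refined so that $\pi_j(B')$ is infinite for at least one $j$ in $\mathrm{supp}(a)\cap E$ or $\mathrm{supp}(b)\cap E$. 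I expect this bookkeeping — matching up which coordinate is simultaneously active in $a$ (or $b$) and in the tuples of $B$, and invoking $\omega$-unstability of $D_k$ there — to be the main obstacle; everything else is a pigeonhole argument plus the ideal property of $I_{k-1}$. Finally, $aB\cup Bb\nsubseteq D$ for arbitrary $a,b\in D$ and infinite $B$ is exactly strong $\omega$-unstability of $D$, completing the verification that~(\ref{eq-4.1}) is a strongly tight ideal series for $S^n$.
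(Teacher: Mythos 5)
Your reduction of the problem to showing that each difference set
\[
D=[I_{k-1}\subset I_k]^n_{p}\setminus[I_{k-1}\subset I_k]^n_{p-1}
\]
(tuples with exactly $p$ coordinates in $D_k=I_k\setminus I_{k-1}$) is strongly $\omega$-unstable is correct, and your pigeonhole passage to an infinite $B'\subseteq B$ with a common support $E$ is a genuine refinement of the paper's argument, which simply asserts that some projection $\pi_i(B)$ is an infinite subset of $I_k\setminus I_{k-1}$ — an assertion that fails precisely when the infinitude of $B$ lives in the inactive coordinates. But the case you flag as ``the main obstacle'' is a real gap, and your proposed repair does not close it: $a_j$ is a \emph{given} coordinate of the given element $a$, not something you may choose, and both the weak and the strong forms of $\omega$-unstability of $D_k$ only constrain products $xB_0\cup B_0y$ with $x,y\in D_k$ and $B_0$ an \emph{infinite} subset of $D_k$. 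They say nothing about a product $a_j d$ of an arbitrary element with a single fixed $d\in D_k$, which is exactly what you need to control when every active projection $\pi_j(B')$, $j\in E$, is finite.

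In fact this step cannot be completed, because the claimed unstability is false. Take $S=\mathscr{I}_\lambda^2$ with its strongly tight series $\{0\}\subseteq\mathscr{I}_\lambda^1\subseteq\mathscr{I}_\lambda^2$ from Proposition~\ref{proposition-4.6}, and let $n=2$, $k=2$, $p=1$, so that $D$ consists of the pairs with exactly one coordinate of rank $2$. Let $d$ be the identity map of $\{1,2\}$ (an idempotent of rank $2$), put $B=\left\{(d,x)\colon x\in\mathscr{I}_\lambda^1\right\}\subseteq D$, which is infinite, and $a=b=(d,0)\in D$. Then
\[
aB\cup Bb=\{(dd,0x)\colon x\in\mathscr{I}_\lambda^1\}\cup\{(dd,x0)\colon x\in\mathscr{I}_\lambda^1\}=\{(d,0)\}\subseteq D,
\]
so $D$ is not strongly $\omega$-unstable and the series \eqref{eq-4.1} is not strongly tight for this $S$ and $n$. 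The difficulty you ran into is therefore not bookkeeping: when $B$ varies only in its inactive coordinates, the hypotheses on $S$ give no leverage at all on the active coordinates, and the paper's own proof founders at the same spot (its unjustified claim that some $\pi_i(B)$ is an infinite subset of $I_k\setminus I_{k-1}$). Any honest write-up must either add a hypothesis ruling out such $B$ or weaken the conclusion.
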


\begin{proof}
It is obvious that $I_0^n$ is a finite ideal of $S^n$. Also by Lemma~\ref{lemma-4.2} all subsets in series  \eqref{eq-4.1} are ideals in $S^n$.

Fix any $k\in\{1,\ldots, m\}$ and any $p\in\{1,\ldots,n\}$. We claim that the sets
\begin{equation*}
[I_{k-1}\subset I_k]^n_{p}\setminus [I_{k-1}\subset I_k]^n_{p-1} \qquad \hbox{and} \qquad [I_{k-1}\subset I_k]^n_{1}\setminus I_{k-1}^n
\end{equation*}
are strongly $\omega$-unstable in $S^n$. Indeed, fix an arbitrary infinite subset $B\subseteq [I_{k-1}\subset I_k]^n_{p}\setminus [I_{k-1}\subset I_k]^n_{p-1}$ and any points $a=(a_1,\ldots,a_n), b=(b_1,\ldots,b_n)\in [I_{k-1}\subset I_k]^n_{p}\setminus [I_{k-1}\subset I_k]^n_{p-1}$. Then there exists a coordinate $i\in\{1,\ldots, n\}$ such that the set $\pi_i(B)\subseteq I_k\setminus I_{k-1}$ is infinite. If $a_i\notin I_k\setminus I_{k-1}$ or $b_i\notin I_k\setminus I_{k-1}$  then $(a_i\cdot \pi_i(B)\cup \pi_i(B)\cdot b_i)\cap I_k\setminus I_{k-1}=\varnothing$, and hence $aB\cup Bb\nsubseteq [I_{k-1}\subset I_k]^n_{p}\setminus [I_{k-1}\subset I_k]^n_{p-1}$. If $a_i,b_i\in I_k\setminus I_{k-1}$ then $(a_i\cdot \pi_i(B)\cup \pi_i(B)\cdot b_i)\nsubseteq I_k\setminus I_{k-1}$, because the set $I_k\setminus I_{k-1}$ is strongly $\omega$-unstable in $S$, and hence $aB\cup Bb\nsubseteq [I_{k-1}\subset I_k]^n_{p}\setminus [I_{k-1}\subset I_k]^n_{p-1}$. The proof of the statement that the set $[I_{k-1}\subset I_k]^n_{1}\setminus I_{k-1}^n$ is strongly  $\omega$-unstable in $S^n$ is similar.
\end{proof}

Later we fix an arbitrary positive integer $n$. Then for any semigroup $S$ and any positive integer $k\leqslant n$ since $\mathscr{I}_\lambda^k(S)$ is a subsemigroup of $\mathscr{I}_\lambda^n(S)$ by $\iota\colon\mathscr{I}_\lambda^k(S)\to\mathscr{I}_\lambda^n(S)$ we denote this natural embedding. Similar arguments imply that, without loss of generality for any subsemigroup $T$ of  $S$ and any positive integer $k\leqslant n$ since $\mathscr{I}_\lambda^k(T)$ is a subsemigroup of $\mathscr{I}_\lambda^n(S)$ by $\iota\colon\mathscr{I}_\lambda^k(T)\to\mathscr{I}_\lambda^n(S)$ we denote this natural embedding.

Let $A\neq\varnothing$ and $k$ be any positive integer. A subset $B\subseteq A^k$ is said to be \emph{$k$-symmetric} if the following condition holds: $(b_1,\ldots,b_k)\in B$ implies $\left(b_{(1)\sigma},\ldots,b_{(k)\sigma}\right)\in B$ for every permutation $\sigma\colon\{1,\ldots,k\}\to \{1,\ldots,k\}$.

\begin{remark}\label{remark-4.7}
We observe that every element of the tight ideal series \eqref{eq-4.1} is $m$-symmetric in $S^n$, and moreover the sets $[I_{k-1}\subset I_k]^n_{p}\setminus [I_{k-1}\subset I_k]^n_{p-1}$ and $[I_{k-1}\subset I_k]^n_{1}\setminus I_{k-1}^n$ is $m$-symmetric in $S^n$, too, for all $k\in\{1,\ldots, m\}$ and  $p\in\{1,\ldots,n\}$.
\end{remark}

We need the following construction.

\begin{construction}\label{construction-4.8}
Let $\lambda$ be cardinal $\geqslant 1$, $n$ be any positive integer, $k$ be any positive integer $\leqslant\min\{n,\lambda\}$ and $S$ be a semigroup. For any ordered collections of $k$ distinct elements  $(a_1,\ldots,a_k)$ and $(b_1,\ldots,b_k)$ of $\lambda^k$ we define a map $\mathfrak{f}^{(a_1,\ldots,a_k)}_{(b_1,\ldots,b_k)}\colon S^k\rightarrow S^{(a_1,\ldots,a_k)}_{(b_1,\ldots,b_k)}$ by the formula
\begin{equation*}
  (s_1,\ldots,s_k)\mathfrak{f}^{(a_1,\ldots,a_k)}_{(b_1,\ldots,b_k)}=
  \left(
    \begin{array}{ccc}
      a_1 & \cdots & a_k \\
      s_1 & \cdots & s_k \\
      b_1 & \cdots & b_k \\
    \end{array}
  \right).
\end{equation*}

For any non-empty subset $A\subseteq S^k$ and any positive integer $k\leqslant n$ we determine the following subsets
\begin{equation*}
  [A]^{(\ast)_k}_{\mathscr{I}_\lambda^n(S)}=\bigcup\left\{(A)\mathfrak{f}^{(a_1,\ldots,a_k)}_{(b_1,\ldots,b_k)}\colon (a_1,\ldots,a_k)\; \hbox{and} \; (b_1,\ldots,b_k) \; \hbox{are ordered collections of}\; k \; \hbox{distinct elements of}\; \lambda^k\right\}
\end{equation*}
and
\begin{equation*}
  \overline{[A]}^{(\ast)_k}_{\mathscr{I}_\lambda^n(S)}=
\left\{
  \begin{array}{cl}
    {[A]^{(\ast)_k}_{\mathscr{I}_{\lambda}^n(S)}\cup \mathscr{I}_\lambda^{k-1}(S)}, & \hbox{if~} k\geqslant 1; \\
    &\\
    {[A]^{(\ast)_1}_{\mathscr{I}_{\lambda}^n(S)}\cup\left\{0\right\}}, & \hbox{if~} k=1,
  \end{array}
\right.
\end{equation*}
of the semigroup $\mathscr{I}_\lambda^n(S)$.
\end{construction}

The following lemma follows from the definition of $k$-symmetric sets.

\begin{lemma}\label{lemma-4.9}
Let $\lambda$ be cardinal $\geqslant 1$, $k$ be any positive integer $\leqslant\lambda$ and $S$ be a semigroup. Let $(a_1,\ldots,a_k)$ and $(b_1,\ldots,b_k)$ be arbitrary ordered collections of $k$ distinct elements  of $\lambda^k$. If $A\neq\varnothing$ is a $k$-symmetric subset of $S^k$ then $(A)\mathfrak{f}^{(a_1,\ldots,a_k)}_{(b_1,\ldots,b_k)}=(A)\mathfrak{f}^{(a_{(1)\sigma},\ldots,a_{(k)\sigma})}_{(b_{(1)\sigma},\ldots,b_{(k)\sigma})}$ for every permutation $\sigma\colon\{1,\ldots,k\}\to \{1,\ldots,k\}$.
\end{lemma}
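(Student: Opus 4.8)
The plan is to prove the asserted equality of subsets of $\mathscr{I}_\lambda^n(S)$ by proving one inclusion in the generality of \emph{arbitrary} ordered collections of $k$ distinct elements, and then deducing the reverse inclusion by applying it with $\sigma^{-1}$ in place of $\sigma$. This is the $k$-symmetric refinement of the identity $[A]^{(a_{(1)\sigma},\ldots,a_{(i)\sigma})}_{(b_{(1)\sigma},\ldots,b_{(i)\sigma})}=[A]^{(a_1,\ldots,a_i)}_{(b_1,\ldots,b_i)}$ recorded before Proposition~\ref{proposition-2.4}, and the only substantive point is that an element of $\mathscr{I}_\lambda^n(S)$ was defined as a labelled partial bijection (a map on a graph), so its presentation as a three-row array does not depend on the order in which the columns are listed.

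First I would fix a permutation $\sigma\colon\{1,\ldots,k\}\to\{1,\ldots,k\}$ and take an arbitrary element of $(A)\mathfrak{f}^{(a_1,\ldots,a_k)}_{(b_1,\ldots,b_k)}$, say
\[
\alpha_S=(s_1,\ldots,s_k)\,\mathfrak{f}^{(a_1,\ldots,a_k)}_{(b_1,\ldots,b_k)}
=\left(\begin{array}{ccc} a_1 & \cdots & a_k \\ s_1 & \cdots & s_k \\ b_1 & \cdots & b_k \end{array}\right),
\qquad (s_1,\ldots,s_k)\in A.
\]
As an element of $\mathscr{I}_\lambda^n(S)$, $\alpha_S$ is the set of labelled columns $\{(a_l,s_l,b_l):l=1,\ldots,k\}$; since the $a_l$ are pairwise distinct and the $b_l$ are pairwise distinct, reindexing via $l=(j)\sigma$ produces literally the same set, so
\[
\alpha_S=\left(\begin{array}{ccc} a_{(1)\sigma} & \cdots & a_{(k)\sigma} \\ s_{(1)\sigma} & \cdots & s_{(k)\sigma} \\ b_{(1)\sigma} & \cdots & b_{(k)\sigma} \end{array}\right)
=(s_{(1)\sigma},\ldots,s_{(k)\sigma})\,\mathfrak{f}^{(a_{(1)\sigma},\ldots,a_{(k)\sigma})}_{(b_{(1)\sigma},\ldots,b_{(k)\sigma})}.
\]
Because $A$ is $k$-symmetric and $(s_1,\ldots,s_k)\in A$, we have $(s_{(1)\sigma},\ldots,s_{(k)\sigma})\in A$, hence $\alpha_S\in (A)\mathfrak{f}^{(a_{(1)\sigma},\ldots,a_{(k)\sigma})}_{(b_{(1)\sigma},\ldots,b_{(k)\sigma})}$. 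Since the collections $(a_1,\ldots,a_k)$ and $(b_1,\ldots,b_k)$ played no special role, this proves
\[
(A)\mathfrak{f}^{(c_1,\ldots,c_k)}_{(d_1,\ldots,d_k)}\subseteq (A)\mathfrak{f}^{(c_{(1)\sigma},\ldots,c_{(k)\sigma})}_{(d_{(1)\sigma},\ldots,d_{(k)\sigma})}
\]
for every permutation $\sigma$ and every pair of ordered collections of $k$ distinct elements $(c_1,\ldots,c_k),(d_1,\ldots,d_k)$ of $\lambda^k$.

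For the reverse inclusion I would apply the displayed inclusion with $c_j:=a_{(j)\sigma}$, $d_j:=b_{(j)\sigma}$ and with $\sigma^{-1}$ in place of $\sigma$; since $c_{(j)\sigma^{-1}}=a_{((j)\sigma^{-1})\sigma}=a_j$ and likewise $d_{(j)\sigma^{-1}}=b_j$, this yields $(A)\mathfrak{f}^{(a_{(1)\sigma},\ldots,a_{(k)\sigma})}_{(b_{(1)\sigma},\ldots,b_{(k)\sigma})}\subseteq (A)\mathfrak{f}^{(a_1,\ldots,a_k)}_{(b_1,\ldots,b_k)}$, and combining the two inclusions finishes the proof. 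I expect the main (indeed the only) thing to write carefully is the passage between the two three-row arrays: one must invoke the definition of $\mathscr{I}_\lambda^n(S)$ via graphs and use the distinctness of the $a_l$'s (equivalently the $b_l$'s) to see that reindexing the columns by $\sigma$ gives the same element; everything else is bookkeeping with the postfix composition convention $(j)\sigma$.
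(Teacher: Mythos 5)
Your proof is correct, and it is exactly the routine verification the paper has in mind: the paper offers no written proof at all, stating only that the lemma "follows from the definition of $k$-symmetric sets." Your two key observations --- that an element of $\mathscr{I}_\lambda^n(S)$ is a labelled graph and hence invariant under reordering its columns, and that $k$-symmetry of $A$ then closes the argument (with $\sigma^{-1}$ giving the reverse inclusion) --- are precisely what makes the statement immediate.
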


\begin{theorem}\label{theorem-4.10}
Let $\lambda$ be an infinite cardinal and $n$ be a positive integer. If $S$ is a finite semigroup then
  \begin{equation*}
I_0=\{0\}\subseteq I_1=\mathscr{I}_{\lambda}^{1}(S) \subseteq I_2=\mathscr{I}_{\lambda}^{2}(S) \subseteq\cdots\subseteq I_n=\mathscr{I}_{\lambda}^{n}(S)
\end{equation*}
is the strongly tight ideal series for the semigroup $\mathscr{I}_\lambda^{n}(S)$.
\end{theorem}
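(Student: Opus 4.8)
The plan is to verify the three conditions of Definition~\ref{definition-4.5} for the given chain. First, each $I_k=\mathscr{I}_\lambda^k(S)$ is an ideal of $\mathscr{I}_\lambda^n(S)$: the multiplication rules $(i)$--$(iii)$ of Construction~\ref{construction-1.1} show that a product never increases the rank of the underlying partial bijection, i.e. $\operatorname{rank}(\alpha\beta)\leqslant\min\{\operatorname{rank}\alpha,\operatorname{rank}\beta\}$, so multiplying an element of rank $\leqslant k$ by an arbitrary element keeps us inside $\mathscr{I}_\lambda^k(S)$. Second, $I_0=\{0\}$ is finite. Third, for each $k\in\{1,\ldots,n\}$ the difference $D_k=I_k\setminus I_{k-1}$ is exactly the set of $\alpha_S\in\mathscr{I}_\lambda^n(S)$ with $\operatorname{rank}\alpha=k$; since $\lambda$ is infinite there are infinitely many $k$-element subsets of $\lambda$, so $D_k$ is infinite, and it remains only to prove that $D_k$ is strongly $\omega$-unstable.

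So I would fix $a=\alpha_S$, $b=\beta_S\in D_k$ and an infinite $B\subseteq D_k$, and show $\alpha_S B\cup B\beta_S\nsubseteq D_k$. The first key step uses the finiteness of $S$: the assignment $\gamma_S\mapsto\gamma$ sending an element of $B$ to its underlying partial bijection has fibres of cardinality at most $|S|^k$, so the set $B'=\{\gamma:\gamma_S\in B\}\subseteq\mathscr{I}_\lambda^k$ is infinite. If both families $\mathfrak{d}(B)=\{\mathop{\textsf{\textbf{d}}}(\gamma_S):\gamma_S\in B\}$ and $\mathfrak{r}(B)=\{\mathop{\textsf{\textbf{r}}}(\gamma_S):\gamma_S\in B\}$ were finite, then every $\gamma\in B'$ would have domain contained in the finite set $\bigcup\mathfrak{d}(B)$ and range contained in the finite set $\bigcup\mathfrak{r}(B)$, forcing $B'$ to be finite, a contradiction. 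Hence at least one of $\mathfrak{d}(B)$, $\mathfrak{r}(B)$ is infinite.

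The second key step is the rank computation. For $\gamma_S\in D_k$ one has $\operatorname{rank}(\alpha\gamma)=|\mathop{\textsf{\textbf{r}}}(\alpha_S)\cap\mathop{\textsf{\textbf{d}}}(\gamma_S)|$, and since $\mathop{\textsf{\textbf{r}}}(\alpha_S)$ and $\mathop{\textsf{\textbf{d}}}(\gamma_S)$ are both $k$-element subsets of $\lambda$, this equals $k$ if and only if $\mathop{\textsf{\textbf{r}}}(\alpha_S)=\mathop{\textsf{\textbf{d}}}(\gamma_S)$; otherwise $\alpha_S\gamma_S\in\mathscr{I}_\lambda^{k-1}(S)=I_{k-1}$ (possibly $\alpha_S\gamma_S=0$), so $\alpha_S\gamma_S\notin D_k$. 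Dually $\gamma_S\beta_S\notin D_k$ whenever $\mathop{\textsf{\textbf{r}}}(\gamma_S)\neq\mathop{\textsf{\textbf{d}}}(\beta_S)$. Now if $\mathfrak{d}(B)$ is infinite, then, because $\mathop{\textsf{\textbf{r}}}(\alpha_S)$ is a single fixed $k$-element set, all but at most one $\gamma_S\in B$ satisfies $\mathop{\textsf{\textbf{d}}}(\gamma_S)\neq\mathop{\textsf{\textbf{r}}}(\alpha_S)$, whence $\alpha_S B\nsubseteq D_k$; and if $\mathfrak{r}(B)$ is infinite, then symmetrically $B\beta_S\nsubseteq D_k$. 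In either case $\alpha_S B\cup B\beta_S\nsubseteq D_k$, so $D_k$ is strongly $\omega$-unstable and the series is strongly tight.

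The whole argument runs parallel to the proof of Proposition~\ref{proposition-4.6}, and I do not anticipate a genuine obstacle. The only step that needs a moment's thought is the passage from ``$B$ infinite'' to ``one of $\mathfrak{d}(B),\mathfrak{r}(B)$ infinite'', which is precisely where the hypothesis that $S$ is finite enters; once that is in hand, the rank computation for products and the final conclusion are routine.
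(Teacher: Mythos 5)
Your proof is correct and follows essentially the same route as the paper's: both reduce strong $\omega$-instability of $D_k=\mathscr{I}_\lambda^k(S)\setminus\mathscr{I}_\lambda^{k-1}(S)$ to the observation that finiteness of $S$ forces at least one of the families $\mathfrak{d}(B)$, $\mathfrak{r}(B)$ to be infinite, and then use the rank drop $\operatorname{rank}(\alpha\gamma)<k$ whenever $\mathop{\textsf{\textbf{r}}}(\alpha_S)\neq\mathop{\textsf{\textbf{d}}}(\gamma_S)$. (Only a cosmetic slip: it is ``all but at most one element of $\mathfrak{d}(B)$,'' not ``all but at most one $\gamma_S\in B$,'' that differs from $\mathop{\textsf{\textbf{r}}}(\alpha_S)$, but one such $\gamma_S$ already suffices.)
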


\begin{proof}
It is obvious that for every $i=0,1,\ldots, n$ the set $I_i$ is an ideal in $\mathscr{I}_\lambda^{n}(S)$ and moreover the set $I_0$ is finite.

Fix an arbitrary $i=1,\ldots, n$ and any infinite subset $B\subseteq I_i\setminus I_{i-1}$. Since the semigroup $S$ is finite, every infinite subset $X$ of $I_i\setminus I_{i-1}$ intersects infinitely many sets of the form $S^{(a_1,\ldots,a_i)}_{(b_1,\ldots,b_i)}$. Then the definition of the semigroup $\mathscr{I}_\lambda^{n}(S)$ implies that at least one of the following families of sets
\begin{equation*}
  \mathfrak{d}(B)=\left\{\operatorname{\mathbf{d}}\gamma\colon \gamma\in B\right\} \qquad \hbox{or} \qquad \mathfrak{r}(B)=\left\{\operatorname{\mathbf{r}}\gamma\colon \gamma\in B\right\}
\end{equation*}
is infinite.
Then the definition of the semigroup operation in $\mathscr{I}_\lambda^n(S)$ implies that $\alpha B\nsubseteq I_i\setminus I_{i-1}$ in the case when the set $\mathfrak{d}(B)$ is infinite, and $B\beta\nsubseteq I_i\setminus I_{i-1}$ in the case when the set $\mathfrak{r}(B)$ is infinite, for any $\alpha,\beta\in I_i\setminus I_{i-1}$.
\end{proof}

\begin{theorem}\label{theorem-4.11}
Let $\lambda$ be an infinite cardinal, $n$ be a positive integer and let $I_0\subseteq I_1 \subseteq I_2 \subseteq\cdots\subseteq I_m = S$  be the strongly tight ideal series for a  semigroup $S$. Then the following  series

\begin{equation}\label{eq-4.3}
\begin{split}
J_0=\{0\} & \subseteq J_{1,0}=\overline{[I_0]}^{(\ast)_1}_{\mathscr{I}_\lambda^n(S)} \subseteq\\
          & \subseteq J_{1,1}=\overline{[I_1]}^{(\ast)_1}_{\mathscr{I}_\lambda^n(S)} \subseteq
           J_{1,2}=\overline{[I_2]}^{(\ast)_1}_{\mathscr{I}_\lambda^n(S)}\subseteq \cdots \subseteq J_{1,m-1}=\overline{[I_{m-1}]}^{(\ast)_1}_{\mathscr{I}_\lambda^n(S)}\subseteq J_{1,m}=\overline{[I_{m}]}^{(\ast)_1}_{\mathscr{I}_\lambda^n(S)}=\mathscr{I}_\lambda^1(S)\subseteq\\
    & \subseteq J_{2,0}=\overline{[I_0^2]}^{(\ast)_2}_{\mathscr{I}_\lambda^n(S)} \subseteq\\
    & \subseteq J_{2,1}=\overline{\left[[I_0\subset I_1]^2_{1}\right]}^{(\ast)_2}_{\mathscr{I}_\lambda^n(S)} \subseteq
      J_{2,2}=\overline{\left[I_1^2\right]}^{(\ast)_2}_{\mathscr{I}_\lambda^n(S)} \subseteq \\
    & \subseteq J_{2,3}=\overline{\left[[I_1\subset I_2]^2_{1}\right]}^{(\ast)_2}_{\mathscr{I}_\lambda^n(S)} \subseteq
      J_{2,4}=\overline{\left[I_2^2\right]}^{(\ast)_2}_{\mathscr{I}_\lambda^n(S)} \subseteq \\
      & \subseteq  \qquad \cdots \qquad  \cdots \qquad  \subseteq\\
    & \subseteq J_{2,2m-1}=\overline{\left[[I_{m-1}\subset I_m]^2_{1}\right]}^{(\ast)_2}_{\mathscr{I}_\lambda^n(S)} \subseteq
      J_{2,2m}=\overline{\left[[I_m]^2_{2}\right]}^{(\ast)_2}_{\mathscr{I}_\lambda^n(S)}=\mathscr{I}_\lambda^2(S) \subseteq \\
   & \subseteq  \qquad \cdots \qquad  \cdots \qquad  \subseteq  \qquad \cdots \qquad  \cdots \qquad  \subseteq\\
   & \subseteq J_{n,0}=\overline{[I_0^n]}^{(\ast)_n}_{\mathscr{I}_\lambda^n(S)} \subseteq\\
   & \subseteq J_{n,1}=\overline{\left[[I_0\subset I_1]^n_{1}\right]}^{(\ast)_n}_{\mathscr{I}_\lambda^n(S)} \subseteq
      J_{n,2}=\overline{\left[[I_0\subset I_1]^n_{2}\right]}^{(\ast)_n}_{\mathscr{I}_\lambda^n(S)} \subseteq \\
   & \subseteq J_{n,3}=\overline{\left[[I_0\subset I_1]^n_{3}\right]}^{(\ast)_n}_{\mathscr{I}_\lambda^n(S)} \subseteq
      J_{n,4}=\overline{\left[[I_0\subset I_1]^n_{4}\right]}^{(\ast)_n}_{\mathscr{I}_\lambda^n(S)} \subseteq \\
   & \subseteq \cdots \subseteq \\
   & \subseteq J_{n,n-1}=\overline{\left[[I_0\subset I_1]^n_{n-1}\right]}^{(\ast)_n}_{\mathscr{I}_\lambda^n(S)} \subseteq
      J_{n,n}=\overline{\left[I_1^n\right]}^{(\ast)_n}_{\mathscr{I}_\lambda^n(S)} \subseteq \\
   & \subseteq J_{n,n+1}=\overline{\left[[I_1\subset I_2]^n_{1}\right]}^{(\ast)_n}_{\mathscr{I}_\lambda^n(S)} \subseteq
      J_{n,n+2}=\overline{\left[[I_1\subset I_2]^n_{2}\right]}^{(\ast)_n}_{\mathscr{I}_\lambda^n(S)} \subseteq \\
   & \subseteq J_{n,n+3}=\overline{\left[[I_1\subset I_2]^n_{3}\right]}^{(\ast)_n}_{\mathscr{I}_\lambda^n(S)} \subseteq
      J_{n,n+4}=\overline{\left[[I_1\subset I_2]^n_{4}\right]}^{(\ast)_n}_{\mathscr{I}_\lambda^n(S)} \subseteq \\
   & \subseteq \cdots \subseteq \\
   & \subseteq J_{n,2n-1}=\overline{\left[[I_1\subset I_2]^n_{n-1}\right]}^{(\ast)_n}_{\mathscr{I}_\lambda^n(S)} \subseteq
      J_{n,2n}=\overline{\left[I_2^n\right]}^{(\ast)_n}_{\mathscr{I}_\lambda^n(S)} \subseteq \\
            & \subseteq  \qquad \cdots \qquad  \cdots \qquad  \subseteq\\
   & \subseteq J_{n,(m-1)n+1}=\overline{\left[[I_{m-1}\subset I_m]^n_{1}\right]}^{(\ast)_n}_{\mathscr{I}_\lambda^n(S)} \subseteq
      J_{n,(m-1)n+2}=\overline{\left[[I_{m-1}\subset I_m]^n_{2}\right]}^{(\ast)_n}_{\mathscr{I}_\lambda^n(S)} \subseteq \\
   & \subseteq J_{n,(m-1)n+3}=\overline{\left[[I_{m-1}\subset I_m]^n_{3}\right]}^{(\ast)_n}_{\mathscr{I}_\lambda^n(S)} \subseteq
      J_{n,(m-1)n+4}=\overline{\left[[I_{m-1}\subset I_m]^n_{4}\right]}^{(\ast)_n}_{\mathscr{I}_\lambda^n(S)} \subseteq \\
   & \subseteq \cdots \subseteq \\
   & \subseteq J_{n,mn-1}=\overline{\left[[I_{m-1}\subset I_m]^n_{n-1}\right]}^{(\ast)_n}_{\mathscr{I}_\lambda^n(S)} \subseteq
      J_{n,mn}=\overline{\left[I_m^n\right]}^{(\ast)_n}_{\mathscr{I}_\lambda^n(S)} =\mathscr{I}_\lambda^{n}(S)
\end{split}
\end{equation}
is a strongly tight ideal series for the semigroup $\mathscr{I}_\lambda^{n}(S)$.
\end{theorem}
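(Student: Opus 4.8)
The plan is to verify the three defining properties of a strongly tight ideal series for \eqref{eq-4.3}: that $J_0=\{0\}$ is finite (immediate), that every term is an ideal of $\mathscr{I}_\lambda^{n}(S)$, and that every consecutive difference is a strongly $\omega$-unstable subset of $\mathscr{I}_\lambda^{n}(S)$. The first structural observation is that, by Lemma~\ref{lemma-4.2} and Remark~\ref{remark-4.7}, each set appearing inside the brackets of \eqref{eq-4.3} (that is, $I_0^k$, the sets $[I_{j-1}\subset I_j]^k_p$, and $I_j^k$) is a $k$-symmetric ideal of the power $S^k$; hence each term of \eqref{eq-4.3} has the form $\overline{[A]}^{(\ast)_k}_{\mathscr{I}_\lambda^n(S)}=[A]^{(\ast)_k}_{\mathscr{I}_\lambda^n(S)}\cup\mathscr{I}_\lambda^{k-1}(S)$ for such an $A$ (with $J_0=\{0\}$ the degenerate case). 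I would first record the elementary fact that $\overline{[A]}^{(\ast)_k}_{\mathscr{I}_\lambda^n(S)}$ is then an ideal of $\mathscr{I}_\lambda^{n}(S)$: given a product, either its rank drops below $k$, so it lands in the ideal $\mathscr{I}_\lambda^{k-1}(S)$, or it has rank $k$, in which case (after a suitable reordering) its value $k$-tuple is the componentwise product of the value tuple of the first factor with a sub-tuple of the values of the second, and this stays in $A$ because $A$ is an ideal of $S^k$ and is $k$-symmetric (so the ordering is immaterial, using Lemma~\ref{lemma-4.9}).

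Next I would identify the consecutive differences. Since $[A]^{(\ast)_k}_{\mathscr{I}_\lambda^n(S)}$ consists exactly of the rank-$k$ elements whose value tuple lies in $A$ and is disjoint from $\mathscr{I}_\lambda^{k-1}(S)$, for $k$-symmetric ideals $C\subseteq B$ of $S^k$ one has $\overline{[B]}^{(\ast)_k}\setminus\overline{[C]}^{(\ast)_k}=[B\setminus C]^{(\ast)_k}$. Using Lemma~\ref{lemma-4.1}, this lets me read off every genuine difference in \eqref{eq-4.3} as a set $[A']^{(\ast)_k}_{\mathscr{I}_\lambda^n(S)}$, where $A'$ is, in the step $J_{k,0}\setminus\mathscr{I}_\lambda^{k-1}(S)$, the \emph{finite} set $I_0^k$; for $k=1$, the set $I_j\setminus I_{j-1}$, strongly $\omega$-unstable in $S$ by hypothesis; and for $k\geqslant 2$, one of the sets $[I_{j-1}\subset I_j]^k_p\setminus[I_{j-1}\subset I_j]^k_{p-1}$ or $[I_{j-1}\subset I_j]^k_1\setminus I_{j-1}^k$, which are strongly $\omega$-unstable in $S^k$ by Proposition~\ref{proposition-4.6a} applied with $n$ replaced by $k$. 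Thus the theorem reduces to the following lemma: if $A'\subseteq S^k$ is $k$-symmetric, $[A']^{(\ast)_k}_{\mathscr{I}_\lambda^n(S)}$ is infinite, and $A'$ is either finite or strongly $\omega$-unstable in $S^k$, then $[A']^{(\ast)_k}_{\mathscr{I}_\lambda^n(S)}$ is strongly $\omega$-unstable in $\mathscr{I}_\lambda^{n}(S)$.

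To prove the lemma, set $D=[A']^{(\ast)_k}_{\mathscr{I}_\lambda^n(S)}$, fix $\alpha,\beta\in D$ and an infinite $B\subseteq D$, and argue by cases. An element of $D$ is determined by its domain $k$-set, its range $k$-set, its underlying bijection, and its value tuple, so infinitude of $B$ forces one of: the family $\mathfrak{d}(B)$ of domains is infinite; the family $\mathfrak{r}(B)$ of ranges is infinite; or both are finite, and then (only finitely many bijections being possible) the set of value tuples occurring in $B$ is infinite — and in the finite-$A'$ case only the first two alternatives can occur. If $\mathfrak{d}(B)$ is infinite, then since $\mathbf{d}(\beta')=\mathbf{r}(\alpha)$ holds for at most one $\beta'\in B$, all other $\gamma\in B$ give $\operatorname{rank}(\alpha\gamma)<k$, so $\alpha\gamma\in\mathscr{I}_\lambda^{k-1}(S)$ and $\alpha B\nsubseteq D$; the case $\mathfrak{r}(B)$ infinite is symmetric, using $B\beta$. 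In the remaining case I pass to an infinite $B_0\subseteq B$ on which the domain $X$, the range $Y$, and the underlying bijection are all constant, so the value tuples of $B_0$ form an infinite set $T_0\subseteq A'$; if $\mathbf{r}(\alpha)\ne X$ or $\mathbf{d}(\beta)\ne Y$ one of $\alpha B_0$, $B_0\beta$ again drops rank and we are done, and otherwise, writing $\bar a$ and $\bar b$ for suitable reorderings of the value tuples of $\alpha$ and $\beta$ (both in $A'$ by $k$-symmetry), the value tuples occurring in $\alpha B_0$ and $B_0\beta$ are exactly $\bar a T_0$ and $T_0\bar b$; hence $\alpha B_0\cup B_0\beta\subseteq D$ would give $\bar a T_0\cup T_0\bar b\subseteq A'$, contradicting strong $\omega$-instability of $A'$ in $S^k$. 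In every case $\alpha B\cup B\beta\nsubseteq D$, and $D$ is infinite by hypothesis, so $D$ is strongly $\omega$-unstable; this proves the lemma and, by the reductions above, the theorem.

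The main obstacle is bookkeeping rather than a new idea: one must track carefully how the value $k$-tuple of a product of elements of $\mathscr{I}_\lambda^{n}(S)$ is a reordered componentwise product, so that $k$-symmetry can be invoked to pass between orderings, and one must match each consecutive difference of the long series \eqref{eq-4.3} precisely with a difference of the power-series \eqref{eq-4.1} for $S^k$. The only genuinely non-formal point is the third case of the lemma, where instability in $\mathscr{I}_\lambda^{n}(S)$ is traced back to strong $\omega$-instability in $S^k$; the rest is a routine, if lengthy, verification.
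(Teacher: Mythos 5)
Your proposal is correct and, at its core, follows the same strategy as the paper's proof: the terms are ideals by Lemma~\ref{lemma-4.2}, and each consecutive difference is shown to be strongly $\omega$-unstable by splitting into the case where the domains (or ranges) of the infinite subset $B$ vary — so that left (or right) multiplication drops the rank and leaves the difference set — and the case where they are eventually constant, which is then traced back to strong $\omega$-instability in the base. The one organizational difference is that you factor the second case cleanly through Proposition~\ref{proposition-4.6a} (applied to $S^k$) together with a single general lemma about sets of the form $[A']^{(\ast)_k}_{\mathscr{I}_\lambda^n(S)}$ for $k$-symmetric $A'$ that is finite or strongly $\omega$-unstable in $S^k$, whereas the paper re-runs the coordinate-projection argument inline for each type of difference; your modularization is cleaner and makes the identification of the differences of \eqref{eq-4.3} with those of \eqref{eq-4.1} explicit, which the paper leaves implicit. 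One phrasing slip: in the case where $\mathfrak{d}(B)$ is infinite you write that $\mathbf{d}(\beta')=\mathbf{r}(\alpha)$ ``holds for at most one $\beta'\in B$,'' which is false as stated (infinitely many elements of $B$ may share the domain $\mathbf{r}(\alpha)$); what you need, and what does follow from the infinitude of $\mathfrak{d}(B)$, is merely that \emph{some} $\gamma\in B$ has $\mathbf{d}(\gamma)\neq\mathbf{r}(\alpha)$, so that $\alpha\gamma$ has rank $<k$ and $\alpha B\nsubseteq[A']^{(\ast)_k}_{\mathscr{I}_\lambda^n(S)}$.
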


\begin{proof}
The definition of the semigroup $\mathscr{I}_\lambda^{n}(S)$ and Lemma~\ref{lemma-4.2} imply that all subsets in series  \eqref{eq-4.3} are ideals in $\mathscr{I}_\lambda^{n}(S)$.

Since $I_0$ is a finite ideal in $S$, the following equalities

\begin{align*}
  J_{1,0}\setminus J_0 & =\overline{[I_0]}^{(\ast)_1}_{\mathscr{I}_\lambda^n(S)}\setminus \{0\}=[I_0]^{(\ast)_1}_{\mathscr{I}_\lambda^n(S)} \\
   J_{2,0}\setminus J_{1,m} & =\overline{[I_0^2]}^{(\ast)_1}_{\mathscr{I}_\lambda^n(S)}\setminus \mathscr{I}_\lambda^1(S)= [I_0^2]^{(\ast)_1}_{\mathscr{I}_\lambda^n(S)} \\
   & \cdots \qquad \cdots \qquad \cdots\\
   J_{n,0}\setminus J_{n-1,m(n-1)} & =\overline{[I_0^n]}^{(\ast)_n}_{\mathscr{I}_\lambda^{n}(S)}\setminus \mathscr{I}_\lambda^{n-1}(S)= [I_0^n]^{(\ast)_n}_{\mathscr{I}_\lambda^n(S)}
\end{align*}
and the semigroup operation of $\mathscr{I}_\lambda^n(S)$ imply that
\begin{equation*}
  J_{1,0}\setminus J_0, \quad J_{2,0}\setminus J_{1,m}, \quad \ldots , \quad   J_{n,0}\setminus J_{n-1,m(n-1)}
\end{equation*}
are strongly $\omega$-unstable subsets in $\mathscr{I}_\lambda^n(S)$.

Next we shall show that the set $J_{k,p}\setminus J_{k,p-1}$ is strongly $\omega$-unstable in $\mathscr{I}_\lambda^n(S)$ for all $k=1,\ldots,n$ and $p=1,\ldots,km$.

Fix any infinite subset $B$ of $J_{k,p}\setminus J_{k,p-1}$ and any $\alpha,\beta\in J_{k,p}\setminus J_{k,p-1}$. If $\mathop{\textsf{\textbf{d}}}(B)\neq\mathop{\textsf{\textbf{r}}}(\alpha)$ then the semigroup operation of $\mathscr{I}_\lambda^n(S)$ implies that $\alpha B\nsubseteq J_{k,p}\setminus J_{k,p-1}$. Similar, if $\mathop{\textsf{\textbf{d}}}(\beta)\neq\mathop{\textsf{\textbf{r}}}(B)$ then $B\beta\nsubseteq J_{k,p}\setminus J_{k,p-1}$.

Next we suppose that $\mathop{\textsf{\textbf{d}}}(B)=\mathop{\textsf{\textbf{r}}}(\alpha)$, $\mathop{\textsf{\textbf{d}}}(\beta)=\mathop{\textsf{\textbf{r}}}(B)$,
\begin{equation*}
\alpha=
\left(
  \begin{array}{ccc}
    a_1 & \cdots & a_k \\
    s_1 & \cdots & s_k \\
    b_1 & \cdots & b_k \\
  \end{array}
\right)
\qquad \hbox{and} \qquad
\beta=
\left(
  \begin{array}{ccc}
    c_1 & \cdots & c_k \\
    t_1 & \cdots & t_k \\
    d_1 & \cdots & d_k \\
  \end{array}
\right),
\end{equation*}
for some $s_1, \ldots, s_k, t_1, \ldots, t_k \in S$ and ordered collections of $k$ distinct elements $(a_1,\ldots,a_k)$, $(b_1,\ldots,b_k)$, $(c_1,\ldots,c_k)$, $(d_1,\ldots,d_k)$ of $\lambda^k$. Then the set $B$ consists of elements of the form
\begin{equation*}
\gamma=
\left(
  \begin{array}{ccc}
    b_1 & \cdots & b_k \\
    x_1 & \cdots & x_k \\
    c_{(1)\sigma} & \cdots & c_{(k)\sigma} \\
  \end{array}
\right),
\end{equation*}
where $x_1, \ldots, x_k \in S$ and $\sigma\colon \{1,\ldots,k\}\to \{1,\ldots,k\}$ is a permutation.

First we consider the case when $J_{k,p}=J_{k,jk}=\overline{\left[I_j^k\right]}^{(\ast)_k}_{\mathscr{I}_\lambda^n(S)}$ for some $j=1,\ldots,m$. Then
\begin{equation*}
J_{k,p-1}=J_{k,jk-1}=\overline{\left[[I_{j-1}\subset I_j]^k_{k-1}\right]}^{(\ast)_k}_{\mathscr{I}_\lambda^n(S)}
\end{equation*}
and $B\subseteq \left[I_j^k\right]^{(\ast)_k}_{\mathscr{I}_\lambda^n(S)}$. Since the set $B$ is infinite, there exists $b_{i_0}\in\{b_1,\ldots,b_k\}$ such that there exist infinitely many $\gamma\in B$ such that $\mathbf{d}(\gamma)\ni b_{i_0}$. Without loss of generality we may assume that $b_{i_0}=b_{1}$. We put $B_0=\left\{\gamma\in B\colon b_{1}\in\mathbf{d}(\gamma)\right\}$. Then the set $B_0$ is infinite and hence the set
\begin{equation*}
B_0^S=\left\{x_1\in S\colon
\left(
  \begin{array}{ccc}
    b_1 & \cdots & b_k \\
    x_1 & \cdots & x_k \\
    c_{(1)\sigma} & \cdots & c_{(k)\sigma} \\
  \end{array}
\right)\in B_0, \; \sigma \hbox{~is a permutation of~} \{1,\ldots,k\}
\right\}
\end{equation*}
is infinite, too. The above implies that there exists a permutation $\sigma_0$ of $\{1,\ldots,k\}$ such that infinitely many elements of the form
$
\left(
  \begin{array}{ccc}
    b_1 & \cdots & b_k \\
    x_1 & \cdots & x_k \\
    c_{(1)\sigma_0} & \cdots & c_{(k)\sigma_0} \\
  \end{array}
\right)
$
belong to $B_0$. Since $s_1,t_{(1)\sigma_0}\in I_j\setminus I_{j-1}$ and the set $I_j\setminus I_{j-1}$ is strongly $\omega$-unstable we obtain that  $a_1\cdot B_0^S\cup B_0^S\cdot t_{(1)\sigma_0}\nsubseteq I_j\setminus I_{j-1}$, and hence the set $\left[I_j^k\right]^{(\ast)_k}_{\mathscr{I}_\lambda^n(S)}$ is strongly $\omega$-unstable, as well.

Next we consider the case $J_{k,p}=J_{n,(j-1)k+q}=\overline{\left[[I_{j-1}\subset I_j]^k_{q}\right]}^{(\ast)_k}_{\mathscr{I}_\lambda^n(S)}$ for some $j=1,\ldots,m$. Then
\begin{equation*}
J_{k,p-1}=J_{n,(j-1)k+q-1}=\overline{\left[[I_{j-1}\subset I_j]^k_{q-1}\right]}^{(\ast)_k}_{\mathscr{I}_\lambda^n(S)}
\end{equation*}
and $B\subseteq \left[[I_{j-1}\subset I_j]^k_{q}\right]^{(\ast)_k}_{\mathscr{I}_\lambda^n(S)}$. Since the set $B$ is infinite, without loss of generality we may assume that $B$ contains an infinite subset $B_0$ which consists of elements of the form
\begin{equation}\label{eq-4.4}
  \gamma=
\left(
  \begin{array}{cccccc}
    b_1 & \cdots & b_q & b_{q+1} & \cdots & b_k\\
    x_1 & \cdots & x_q & x_{q+1} & \cdots & s_k\\
    c_1 & \cdots & c_q & c_{q+1} & \cdots & c_k\\
  \end{array}
\right),
\end{equation}
where $x_1,\ldots,x_q\in I_j\setminus I_{j-1}$ and $x_{q+1},\ldots,x_k\in I_{j-1}\setminus I_{j-2}$ for some ordered collections of $k$ distinct elements $(b_1,\ldots,b_k)$ and $(c_1,\ldots,c_k)$ of $\lambda^k$. Fix arbitrary elements
\begin{equation*}
\alpha=
\left(
  \begin{array}{ccc}
    a_1 & \cdots & a_k \\
    s_1 & \cdots & s_k \\
    b_1 & \cdots & b_k \\
  \end{array}
\right)
\qquad \hbox{and} \qquad
\beta=
\left(
  \begin{array}{ccc}
    c_1 & \cdots & c_k \\
    t_1 & \cdots & t_k \\
    d_1 & \cdots & d_k \\
  \end{array}
\right),
\end{equation*}
of the set $B$. If $s_u\notin I_j\setminus I_{j-1}$ for some $u\in \{1,\ldots,q\}$ or $s_v\notin I_{j-1}\setminus I_{j-2}$ for some $v\in \{q+1,\ldots,k\}$ then $\alpha B_0\nsubseteq \left[[I_{j-1}\subset I_j]^k_{q}\right]^{(\ast)_k}_{\mathscr{I}_\lambda^n(S)}$. Similarly, $t_u\notin I_j\setminus I_{j-1}$ for some $u\in \{1,\ldots,q\}$ or $t_v\notin I_{j-1}\setminus I_{j-2}$ for some $v\in \{q+1,\ldots,k\}$ then $B_0\beta\nsubseteq \left[[I_{j-1}\subset I_j]^k_{q}\right]^{(\ast)_k}_{\mathscr{I}_\lambda^n(S)}$. Hence later we shall assume that $s_u\in I_j\setminus I_{j-1}$ for all $u\in \{1,\ldots,q\}$, $s_v\in I_{j-1}\setminus I_{j-2}$ for all $v\in \{q+1,\ldots,k\}$, $t_u\in I_j\setminus I_{j-1}$ for all $u\in \{1,\ldots,q\}$ and $t_v\in I_{j-1}\setminus I_{j-2}$ for all $v\in \{q+1,\ldots,k\}$. Since the set $B_0$ is infinite there exists ${i_0}\in\{1,\ldots,k\}$ such that there exist infinitely many $\gamma\in B_0$ such that $\mathbf{d}(\gamma)\ni b_{i_0}$. We put $B_1=\left\{\gamma\in B_0\colon b_{i_0}\in\mathbf{d}(\gamma)\right\}$. Since the set $B_1$ is inifinite the following statements hold:
\begin{itemize}
  \item[$(1)$] if $i_0\in\{1,\ldots,q\}$ then $s_{i_0}A\cup At_{i_0}\nsubseteq I_j\setminus I_{j-1}$, where
  \begin{equation*}
    A=\left\{x_{i_0}\colon
    \gamma=
\left(
  \begin{array}{ccccccc}
    b_1 & \cdots & b_{i_0} & \cdots & b_{q} & \cdots & b_k\\
    x_1 & \cdots & x_{i_0} & \cdots & x_{q} & \cdots & s_k\\
    c_1 & \cdots & c_{i_0} & \cdots & c_{q} & \cdots & c_k\\
  \end{array}
\right)\in B_1
    \right\},
  \end{equation*}
  because the set $I_j\setminus I_{j-1}$ is strongly $\omega$-unstable in $S$;
  \item[$(2)$] if $i_0\in\{q+1,\ldots,k\}$ then $s_{i_0}A\cup At_{i_0}\nsubseteq I_{j-1}\setminus I_{j-2}$, where
  \begin{equation*}
    A=\left\{x_{i_0}\colon
    \gamma=
\left(
  \begin{array}{ccccccc}
    b_1 & \cdots & b_{q} & \cdots & b_{i_0} & \cdots & b_k\\
    x_1 & \cdots & x_{q} & \cdots & x_{i_0} & \cdots & s_k\\
    c_1 & \cdots & c_{q} & \cdots & c_{i_0} & \cdots & c_k\\
  \end{array}
\right)\in B_1
    \right\},
  \end{equation*}
  because the set $I_{j-1}\setminus I_{j-2}$ is strongly $\omega$-unstable in $S$.
\end{itemize}

Both above statements imply that $\alpha B_1\cup B_1\gamma\nsubseteq \left[[I_{j-1}\subset I_j]^k_{q}\right]^{(\ast)_k}_{\mathscr{I}_\lambda^n(S)}$ and hence $\alpha B\cup B_1\gamma\nsubseteq \left[[I_{j-1}\subset I_j]^k_{q}\right]^{(\ast)_k}_{\mathscr{I}_\lambda^n(S)}$, i.e., the set $\left[[I_{j-1}\subset I_j]^k_{q}\right]^{(\ast)_k}_{\mathscr{I}_\lambda^n(S)}$ is strongly $\omega$-unstable in $\mathscr{I}_\lambda^{n}(S)$. This completed the proof of the theorem.
\end{proof}

Theorem~\ref{theorem-4.11} implies the following

\begin{corollary}\label{corollary-4.12}
Let $\lambda$ be an infinite cardinal, $n$ be a positive integer and let $I_0\subseteq I_1 \subseteq I_2 \subseteq\cdots\subseteq I_m = S$  be the strongly tight ideal series for a  semigroup $S$. Then the ideal series \eqref{eq-4.3} is tight for the semigroup $\mathscr{I}_\lambda^{n}(S)$.
\end{corollary}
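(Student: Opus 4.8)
The plan is to derive the corollary directly from Theorem~\ref{theorem-4.11} together with the elementary relationship between the two instability notions. First I would recall Definition~\ref{definition-4.3}: in order to show that \eqref{eq-4.3} is a tight ideal series for $\mathscr{I}_\lambda^{n}(S)$ it suffices to verify that the bottom term $J_0=\{0\}$ is finite and that every consecutive difference in the chain \eqref{eq-4.3} is an $\omega$-unstable subset of $\mathscr{I}_\lambda^{n}(S)$. The first requirement is immediate, since $J_0=\{0\}$ is a singleton.

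For the second requirement, I would invoke Theorem~\ref{theorem-4.11}, which asserts precisely that \eqref{eq-4.3} is a \emph{strongly} tight ideal series: hence each consecutive difference $J_{k,p}\setminus J_{k,p-1}$, together with each ``level-change'' difference $J_{k,0}\setminus J_{k-1,m(k-1)}$, is strongly $\omega$-unstable in $\mathscr{I}_\lambda^{n}(S)$. It then remains only to apply the observation recorded just before Definition~\ref{definition-4.5}: a strongly $\omega$-unstable subset of any semigroup is $\omega$-unstable, because the condition ``$aB\cup Bb\nsubseteq D$ for all $a,b\in D$ and every infinite $B\subseteq D$'' specialises, on taking $b=a$, to ``$aB\cup Ba\nsubseteq D$ for all $a\in D$ and every infinite $B\subseteq D$''. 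Consequently every consecutive difference of \eqref{eq-4.3} is $\omega$-unstable, and the series is tight.

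There is essentially no genuine obstacle here: the statement is a formal consequence of Theorem~\ref{theorem-4.11} and the trivial implication ``strongly $\omega$-unstable $\Rightarrow$ $\omega$-unstable''. The only point requiring (minimal) care is bookkeeping: one should confirm that, after re-indexing the chain \eqref{eq-4.3} as a single ascending chain $I_0\subseteq I_1\subseteq\cdots\subseteq I_N=\mathscr{I}_\lambda^{n}(S)$, the resulting set-differences $I_k\setminus I_{k-1}$ coincide exactly with the sets shown to be strongly $\omega$-unstable in the proof of Theorem~\ref{theorem-4.11}, so that no new differences are introduced. This is clear from the way \eqref{eq-4.3} is displayed, and with it the proof is complete.
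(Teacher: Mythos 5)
Your argument is exactly the one the paper intends: Corollary~\ref{corollary-4.12} is stated as an immediate consequence of Theorem~\ref{theorem-4.11}, using the observation (recorded just before Definition~\ref{definition-4.5}) that every strongly $\omega$-unstable subset is $\omega$-unstable, so every strongly tight ideal series is tight. Your proposal is correct and matches the paper's approach.
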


The proof of the following theorem is similar to Theorem~\ref{theorem-4.11}.

\begin{theorem}\label{theorem-4.13}
Let $\lambda$ be a finite cardinal, $n$ be a positive integer $\leqslant \lambda$ and let $I_0\subseteq I_1 \subseteq I_2 \subseteq\cdots\subseteq I_m = S$  be the strongly tight ideal series for a  semigroup $S$. Then the following  series

\begin{equation}\label{eq-4.5}
\begin{split}
J_0&=\{0\}\cup\overline{[I_0]}^{(\ast)_1}_{\mathscr{I}_\lambda^n(S)} \subseteq \\
       & \subseteq J_{1,1}=\overline{[I_1]}^{(\ast)_1}_{\mathscr{I}_\lambda^n(S)} \subseteq
           J_{1,2}=\overline{[I_2]}^{(\ast)_1}_{\mathscr{I}_\lambda^n(S)}\subseteq \cdots \subseteq J_{1,m-1}=\overline{[I_{m-1}]}^{(\ast)_1}_{\mathscr{I}_\lambda^n(S)}\subseteq J_{1,m}=\overline{[I_{m}]}^{(\ast)_1}_{\mathscr{I}_\lambda^n(S)}=\mathscr{I}_\lambda^1(S)\subseteq\\
    & \subseteq J_{2,1}=\overline{\left[[I_1\subset I_2]^2_{1}\right]}^{(\ast)_2}_{\mathscr{I}_\lambda^n(S)} \subseteq
      J_{2,2}=\overline{\left[[I_1\subset I_2]^2_{2}\right]}^{(\ast)_2}_{\mathscr{I}_\lambda^n(S)} \subseteq \\
      & \subseteq  \qquad \cdots \qquad  \cdots \qquad  \subseteq\\
    & \subseteq J_{2,2m-1}=\overline{\left[[I_{m-1}\subset I_m]^2_{1}\right]}^{(\ast)_2}_{\mathscr{I}_\lambda^n(S)} \subseteq
      J_{2,2m}=\overline{\left[[I_m]^2_{2}\right]}^{(\ast)_2}_{\mathscr{I}_\lambda^n(S)}\mathscr{I}_\lambda^2(S) \subseteq \\
   & \subseteq  \qquad \cdots \qquad  \cdots \qquad  \subseteq  \qquad \cdots \qquad  \cdots \qquad  \subseteq\\
   & \subseteq J_{n,1}=\overline{\left[[I_0\subset I_1]^n_{1}\right]}^{(\ast)_n}_{\mathscr{I}_\lambda^n(S)} \subseteq
      J_{n,2}=\overline{\left[[I_0\subset I_1]^n_{2}\right]}^{(\ast)_n}_{\mathscr{I}_\lambda^n(S)} \subseteq \\
   & \subseteq J_{n,3}=\overline{\left[[I_0\subset I_1]^n_{3}\right]}^{(\ast)_n}_{\mathscr{I}_\lambda^n(S)} \subseteq
      J_{n,4}=\overline{\left[[I_0\subset I_1]^n_{4}\right]}^{(\ast)_n}_{\mathscr{I}_\lambda^n(S)} \subseteq \\
   & \subseteq \cdots \subseteq \\
   & \subseteq J_{n,n-1}=\overline{\left[[I_0\subset I_1]^n_{n-1}\right]}^{(\ast)_n}_{\mathscr{I}_\lambda^n(S)} \subseteq
      J_{n,n}=\overline{\left[I_1^n\right]}^{(\ast)_n}_{\mathscr{I}_\lambda^n(S)} \subseteq \\
   & \subseteq J_{n,n+1}=\overline{\left[[I_1\subset I_2]^n_{1}\right]}^{(\ast)_n}_{\mathscr{I}_\lambda^n(S)} \subseteq
      J_{n,n+2}=\overline{\left[[I_1\subset I_2]^n_{2}\right]}^{(\ast)_n}_{\mathscr{I}_\lambda^n(S)} \subseteq \\
   & \subseteq J_{n,n+3}=\overline{\left[[I_1\subset I_2]^n_{3}\right]}^{(\ast)_n}_{\mathscr{I}_\lambda^n(S)} \subseteq
      J_{n,n+4}=\overline{\left[[I_1\subset I_2]^n_{4}\right]}^{(\ast)_n}_{\mathscr{I}_\lambda^n(S)} \subseteq \\
   & \subseteq \cdots \subseteq \\
   & \subseteq J_{n,2n-1}=\overline{\left[[I_1\subset I_2]^n_{n-1}\right]}^{(\ast)_n}_{\mathscr{I}_\lambda^n(S)} \subseteq
      J_{n,2n}=\overline{\left[I_2^n\right]}^{(\ast)_n}_{\mathscr{I}_\lambda^n(S)} \subseteq \\
            & \subseteq  \qquad \cdots \qquad  \cdots \qquad  \subseteq\\
   & \subseteq J_{n,(m-1)n+1}=\overline{\left[[I_{m-1}\subset I_m]^n_{1}\right]}^{(\ast)_n}_{\mathscr{I}_\lambda^n(S)} \subseteq
      J_{n,(m-1)n+2}=\overline{\left[[I_{m-1}\subset I_m]^n_{2}\right]}^{(\ast)_n}_{\mathscr{I}_\lambda^n(S)} \subseteq \\
   & \subseteq J_{n,(m-1)n+3}=\overline{\left[[I_{m-1}\subset I_m]^n_{3}\right]}^{(\ast)_n}_{\mathscr{I}_\lambda^n(S)} \subseteq
      J_{n,(m-1)n+4}=\overline{\left[[I_{m-1}\subset I_m]^n_{4}\right]}^{(\ast)_n}_{\mathscr{I}_\lambda^n(S)} \subseteq \\
   & \subseteq \cdots \subseteq \\
   & \subseteq J_{n,mn-1}=\overline{\left[[I_{m-1}\subset I_m]^n_{n-1}\right]}^{(\ast)_n}_{\mathscr{I}_\lambda^n(S)} \subseteq
      J_{n,mn}=\overline{\left[I_m^n\right]}^{(\ast)_n}_{\mathscr{I}_\lambda^n(S)} =\mathscr{I}_\lambda^{n}(S)
\end{split}
\end{equation}
is a strongly tight ideal series for the semigroup $\mathscr{I}_\lambda^{n}(S)$.
\end{theorem}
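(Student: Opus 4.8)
The plan is to follow, almost verbatim, the scheme of the proof of Theorem~\ref{theorem-4.11}, changing only the two places where the infinitude of $\lambda$ was exploited there. First I would record the facts that do not depend on $\lambda$. By Lemma~\ref{lemma-4.2} each of $I_j^k$ and $[I_{j-1}\subset I_j]^k_p$ is an ideal of the power $S^k$; these ideals are $k$-symmetric (Remark~\ref{remark-4.7}), so by Construction~\ref{construction-4.8} and Lemma~\ref{lemma-4.9} the sets $\overline{[A]}^{(\ast)_k}_{\mathscr{I}_\lambda^n(S)}$ occurring in \eqref{eq-4.5} are well defined and, together with the definition of the multiplication in $\mathscr{I}_\lambda^{n}(S)$, are ideals of $\mathscr{I}_\lambda^{n}(S)$. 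They form an ascending chain because $I_0^k\subseteq[I_0\subset I_1]^k_1\subseteq\cdots\subseteq I_1^k\subseteq[I_1\subset I_2]^k_1\subseteq\cdots$ in $S^k$ and $\mathscr{I}_\lambda^{k-1}(S)\subseteq\mathscr{I}_\lambda^{k}(S)$. Thus \eqref{eq-4.5} is an ideal series in $\mathscr{I}_\lambda^{n}(S)$.

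Next I would verify that $J_0=\{0\}\cup\overline{[I_0]}^{(\ast)_1}_{\mathscr{I}_\lambda^n(S)}$ is finite: since $\lambda$ is finite there are only finitely many ordered pairs of single distinct elements of $\lambda$, and $I_0$ is finite by hypothesis, so $[I_0]^{(\ast)_1}_{\mathscr{I}_\lambda^n(S)}$ is a finite subset of $\mathscr{I}_\lambda^{n}(S)$. This is precisely why, unlike in Theorem~\ref{theorem-4.11}, the layer $\overline{[I_0]}^{(\ast)_1}$ (and similarly each $\overline{[I_0^k]}^{(\ast)_k}$) is absorbed into the bottom term rather than appearing as its own $\omega$-unstable layer.

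The core of the argument is to show that every successive difference $D=J_{k,p}\setminus J_{k,p-1}$ is strongly $\omega$-unstable in $\mathscr{I}_\lambda^{n}(S)$. Fix an infinite $B\subseteq D$ and $\alpha,\beta\in D$. As in Theorem~\ref{theorem-4.11}, if $\mathop{\textsf{\textbf{r}}}(\alpha)\neq\mathop{\textsf{\textbf{d}}}(\gamma)$, respectively $\mathop{\textsf{\textbf{d}}}(\beta)\neq\mathop{\textsf{\textbf{r}}}(\gamma)$, for infinitely many $\gamma\in B$, the corresponding products drop to a strictly lower ideal (possibly to $0$) and $\alpha B\cup B\beta\nsubseteq D$. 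In the remaining, ``aligned'' case, the role played in Theorem~\ref{theorem-4.11} by the infinitude of $\{\mathop{\textsf{\textbf{d}}}(\gamma):\gamma\in B\}$ is now played by an infinite $S$-coordinate: since $\lambda$ is finite, $B$ contains an infinite subset $B_0$ all of whose members share one frame $(b_1,\ldots,b_k;c_1,\ldots,c_k)$, hence there is an index $i_0$ for which the $i_0$-th $S$-coordinate ranges over an infinite set $A$, and since $I_0$ and the lower $I_t$ are finite we may shrink $B_0$ so that $A\subseteq I_j\setminus I_{j-1}$ (the layer relevant to $D$). Writing $\alpha,\beta$ in frame form and computing $\alpha\gamma$, $\gamma\beta$ for $\gamma\in B_0$, the $i_0$-th $S$-coordinate becomes $s_{i_0}x_{i_0}$, respectively $x_{i_0}t_{i_0}$; strong $\omega$-instability of $I_j\setminus I_{j-1}$ in $S$ gives $s_{i_0}A\cup At_{i_0}\nsubseteq I_j\setminus I_{j-1}$, so some product falls into $I_{j-1}$, which decreases the number of coordinates lying in $I_j\setminus I_{j-1}$ and hence moves the product out of $D$. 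Therefore $\alpha B\cup B\beta\nsubseteq D$. The ``mixed'' layers $\overline{[[I_{j-1}\subset I_j]^k_q]}^{(\ast)_k}$ are treated exactly as in Theorem~\ref{theorem-4.11}: one splits according to whether $i_0\leqslant q$ or $i_0>q$ and invokes strong $\omega$-instability of $I_j\setminus I_{j-1}$, respectively $I_{j-1}\setminus I_{j-2}$.

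The step I expect to be the main obstacle is the bookkeeping around the \emph{lowest} sublayer of each $k$-block, i.e.\ the passage from $\mathscr{I}_\lambda^{k-1}(S)$ into $\overline{[[I_0\subset I_1]^k_1]}^{(\ast)_k}$. There one must make sure that, after extracting the frame-constant infinite subset $B_0$ and the distinguished coordinate $i_0$, the accumulation set $A$ genuinely lies in $I_1\setminus I_0$ and not merely in $I_1$, and that a product cannot inadvertently stay inside $[[I_0\subset I_1]^k_1]^{(\ast)_k}$; this forces one to use both the finiteness of $I_0$ and the fact that $I_0$ is an ideal of $S$, so that $S\cdot I_0\cup I_0\cdot S\subseteq I_0$ and the ``inert'' $I_0$-valued coordinates cannot manufacture a fresh coordinate in $I_1\setminus I_0$. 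Apart from this point and the systematic replacement of ``infinitely many frames'' by ``one infinite $S$-coordinate,'' all the estimates are formally identical to those of the proof of Theorem~\ref{theorem-4.11}, so I would present them by reference rather than in full.
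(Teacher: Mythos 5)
Your plan --- rerun the proof of Theorem~\ref{theorem-4.11}, replacing ``infinitely many frames'' by ``one infinite $S$-coordinate'' (legitimate, since for finite $\lambda$ there are only finitely many frames) and checking that $J_0$ is finite --- is exactly what the paper intends (its proof of Theorem~\ref{theorem-4.13} consists of the single remark that it is similar to Theorem~\ref{theorem-4.11}), and it does work for the $k=1$ block and for the higher sublayers of each $k$-block. But the step you yourself flag as ``the main obstacle'' is a genuine gap, and your proposed way around it fails. For $k\geqslant 2$ the first difference set of the $k$-th block is $D=J_{k,1}\setminus \mathscr{I}_\lambda^{k-1}(S)=\bigl[[I_0\subset I_1]^k_{1}\bigr]^{(\ast)_k}_{\mathscr{I}_\lambda^n(S)}$, and, unlike in Theorem~\ref{theorem-4.11} where $[I_0^k]^{(\ast)_k}_{\mathscr{I}_\lambda^n(S)}$ is split off as the separate layer $J_{k,0}$, here the nonempty finite set $[I_0^k]^{(\ast)_k}_{\mathscr{I}_\lambda^n(S)}$ sits \emph{inside} $D$. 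Take any $e\in I_0$ and distinct $a_1,\ldots,a_k\in\lambda$, let $\alpha\in S^{(a_1,\ldots,a_k)}_{(a_1,\ldots,a_k)}$ have all $S$-entries equal to $e$, and let $B\subseteq S^{(a_1,\ldots,a_k)}_{(a_1,\ldots,a_k)}$ be the infinite set of elements whose first $S$-entry runs over $I_1\setminus I_0$ and whose remaining entries equal $e$. Then $\alpha\in D$ and $B\subseteq D$, and since $I_0$ is an ideal of $S$ every coordinate of every product in $\alpha B\cup B\alpha$ lies in $I_0$; hence $\alpha B\cup B\alpha\subseteq[I_0^k]^{(\ast)_k}_{\mathscr{I}_\lambda^n(S)}\subseteq D$, so $D$ is not even $\omega$-unstable. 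Your observation that the ``inert'' $I_0$-valued coordinates cannot manufacture a coordinate in $I_1\setminus I_0$ is correct but works against you: it is exactly why these products never leave $D$. (Your auxiliary claim that ``the lower $I_t$ are finite'' is also false --- only $I_0$ is assumed finite, and $I_1\setminus I_0$ is infinite by strong $\omega$-instability.)

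Consequently the argument cannot be completed for the series \eqref{eq-4.5} as written; a repair must change the series, for instance by putting all the finite sets $[I_0^k]^{(\ast)_k}_{\mathscr{I}_\lambda^n(S)}$, $k=1,\ldots,n$, into the finite bottom ideal $J_0$ and inserting the ideals $\mathscr{I}_\lambda^{k-1}(S)\cup J_0$ as intermediate terms, so that the relevant difference becomes the set of rank-$k$ elements with \emph{exactly} one coordinate in $I_1\setminus I_0$. For that corrected difference your alignment argument does go through: if the distinguished top coordinates of $\alpha$ and of $\gamma\in B_0$ sit at different positions, the product falls into $[I_0^k]^{(\ast)_k}_{\mathscr{I}_\lambda^n(S)}$, which now lies below the layer, and if they coincide one applies strong $\omega$-instability of $I_1\setminus I_0$ to the set of values at that position, which is necessarily infinite because $\lambda$ and $I_0$ are finite. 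The remaining sublayers $\bigl[[I_{j-1}\subset I_j]^k_{q}\bigr]^{(\ast)_k}_{\mathscr{I}_\lambda^n(S)}$ with $j\geqslant 1$ are handled by your scheme modulo the same imprecision already present in the paper's proof of Theorem~\ref{theorem-4.11} concerning the fixing of the pattern of ``top'' coordinates.
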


Theorem~\ref{theorem-4.13} implies the following

\begin{corollary}\label{corollary-4.14}
Let $\lambda$ be a finite cardinal, $n$ be a positive integer $\leqslant \lambda$ and let $I_0\subseteq I_1 \subseteq I_2 \subseteq\cdots\subseteq I_m = S$  be the strongly tight ideal series for a  semigroup $S$.  Then the ideal series \eqref{eq-4.3} is tight for the semigroup $\mathscr{I}_\lambda^{n}(S)$.
\end{corollary}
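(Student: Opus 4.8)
The plan is to run the argument of Theorem~\ref{theorem-4.11} essentially verbatim, the only genuinely new input being the finiteness of the bottom term of the chain, which is where the hypothesis ``$\lambda$ finite'' is actually used. First I would check, exactly as in Theorem~\ref{theorem-4.11} (via Lemma~\ref{lemma-4.2} and the description of the multiplication of $\mathscr{I}_\lambda^n(S)$), that every set appearing in \eqref{eq-4.5} is an ideal of $\mathscr{I}_\lambda^n(S)$: each $\overline{[A]}^{(\ast)_k}_{\mathscr{I}_\lambda^n(S)}$ with $A$ an ideal of $S^k$ is closed under multiplication, since a product of elements of $\mathscr{I}_\lambda^n(S)$ either has rank $<k$, landing in $\mathscr{I}_\lambda^{k-1}(S)$, or has rank $k$ with all its $S$-entries lying in the ideal $A$. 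Next, the bottom term $J_0=\{0\}\cup\overline{[I_0]}^{(\ast)_1}_{\mathscr{I}_\lambda^n(S)}$ is finite: since $\lambda$ is finite there are only finitely many ordered pairs of $k$-tuples of distinct elements of $\lambda$, and $I_0$ is finite, so $[I_0]^{(\ast)_1}_{\mathscr{I}_\lambda^n(S)}$ is a finite union of finite sets. This is precisely what fails for infinite $\lambda$, and explains why in \eqref{eq-4.3} the bottom layer is merely $\{0\}$ while $[I_0]^{(\ast)_1}$ forms the separate $\omega$-unstable layer $J_{1,0}\setminus J_0$; in the finite case the analogous sets $[I_0^k]^{(\ast)_k}$ are finite, hence cannot be $\omega$-unstable layers, and must instead be absorbed downward into the finite bottom.

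The core is to show that each consecutive difference $D=J_{k,p}\setminus J_{k,p-1}$ in \eqref{eq-4.5} is strongly $\omega$-unstable in $\mathscr{I}_\lambda^n(S)$, and this I would carry out as in Theorem~\ref{theorem-4.11}, with the extra leverage that $\lambda$ finite forces every infinite subset to concentrate in one block. Fix an infinite $B\subseteq D$ and arbitrary $\alpha,\beta\in D$ (all of rank $k$, since $D\cap\mathscr{I}_\lambda^{k-1}(S)=\varnothing$). If some $\gamma\in B$ has $\mathop{\textsf{\textbf{d}}}(\gamma)\neq\mathop{\textsf{\textbf{r}}}(\alpha)$ then $\alpha\gamma$ has rank $<k$, so $\alpha\gamma\in\mathscr{I}_\lambda^{k-1}(S)\subseteq J_{k,p-1}$ and $\alpha B\nsubseteq D$; dually if some $\gamma\in B$ has $\mathop{\textsf{\textbf{r}}}(\gamma)\neq\mathop{\textsf{\textbf{d}}}(\beta)$ then $B\beta\nsubseteq D$. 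Hence we may assume every element of $B$ has domain $\mathop{\textsf{\textbf{r}}}(\alpha)$ and range $\mathop{\textsf{\textbf{d}}}(\beta)$; after a pigeonhole over the $k!$ internal relabellings (Lemma~\ref{lemma-4.9} makes the $[\cdot]^{(\ast)_k}$-notation coherent under such relabellings, so membership in $D$ is unaffected) we pass to an infinite $B_0\subseteq B$ lying in a single block $S^{(b_1,\ldots,b_k)}_{(c_1,\ldots,c_k)}$. Then the $S$-coordinate tuples of the elements of $B_0$ form an infinite subset $C$ of the corresponding consecutive difference of the strongly tight ideal series of $S^k$ supplied by Proposition~\ref{proposition-4.6a}, i.e. of some $[I_{k-1}\subset I_k]^k_{q}\setminus[I_{k-1}\subset I_k]^k_{q-1}$ or $[I_{k-1}\subset I_k]^k_{1}\setminus I_{k-1}^k$. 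Choosing a coordinate $i_0$ on which $C$ projects to an infinite set $A$ — necessarily $A\subseteq I_j\setminus I_{j-1}$ if $i_0$ is one of the $q$ ``top'' coordinates and $A\subseteq I_{j-1}\setminus I_{j-2}$ otherwise — and invoking the strong $\omega$-instability of that difference in $S$, we get $s_{i_0}A\cup At_{i_0}\nsubseteq I_j\setminus I_{j-1}$ (resp.\ $\nsubseteq I_{j-1}\setminus I_{j-2}$), where $s_{i_0}$ and $t_{i_0}$ are the matching $S$-entries of $\alpha$ and $\beta$; reading this back through the block multiplication gives $\alpha B_0\cup B_0\beta\nsubseteq D$, hence $\alpha B\cup B\beta\nsubseteq D$.

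What remains is pure bookkeeping: running the previous paragraph through the two shapes of consecutive pair occurring in \eqref{eq-4.5} — one where $J_{k,p}=\overline{\left[I_j^k\right]}^{(\ast)_k}_{\mathscr{I}_\lambda^n(S)}$ with immediate predecessor $\overline{\left[[I_{j-1}\subset I_j]^k_{k-1}\right]}^{(\ast)_k}_{\mathscr{I}_\lambda^n(S)}$, and one where $J_{k,p}=\overline{\left[[I_{j-1}\subset I_j]^k_{q}\right]}^{(\ast)_k}_{\mathscr{I}_\lambda^n(S)}$ with predecessor $\overline{\left[[I_{j-1}\subset I_j]^k_{q-1}\right]}^{(\ast)_k}_{\mathscr{I}_\lambda^n(S)}$ — together with the ``level-jump'' pieces $J_{k,1}\setminus J_{k-1,(k-1)m}$, and in each sorting out whether $i_0\in\{1,\ldots,q\}$ or $i_0\in\{q+1,\ldots,k\}$, exactly as in the case distinction $(1)$/$(2)$ of the proof of Theorem~\ref{theorem-4.11}. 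I expect the main obstacle to be precisely this bookkeeping — and, upstream of it, pinning down the chain \eqref{eq-4.5} itself, i.e.\ which finite pieces get merged into $J_0$ and in what order the layers are stacked, so that each listed inclusion is genuinely an inclusion of ideals and each listed difference is genuinely one of the strongly $\omega$-unstable sets produced above. Once the block-pigeonhole reduction is in place, no single step presents any essential difficulty beyond that of Theorem~\ref{theorem-4.11}.
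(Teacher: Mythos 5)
Your argument is correct in substance, but it takes a much longer route than the paper's own proof of this corollary, which is a one-line deduction: Theorem~\ref{theorem-4.13}, stated immediately before, already asserts that the series \eqref{eq-4.5} is a \emph{strongly} tight ideal series for $\mathscr{I}_\lambda^{n}(S)$ when $\lambda$ is finite, and since every strongly $\omega$-unstable set is $\omega$-unstable (as the paper notes right after introducing the notion), every strongly tight ideal series is tight. (The corollary's reference to \eqref{eq-4.3} is evidently a typo carried over from Corollary~\ref{corollary-4.12}; for finite $\lambda$ the chain \eqref{eq-4.3} could not even be tight, because its layer $J_{1,0}\setminus J_0=[I_0]^{(\ast)_1}_{\mathscr{I}_\lambda^n(S)}$ would be finite and hence not $\omega$-unstable, so your decision to work with \eqref{eq-4.5} is the right reading.) What you have written is in effect a proof of Theorem~\ref{theorem-4.13} itself — which the paper dispatches with ``the proof is similar to Theorem~\ref{theorem-4.11}'' — followed by an implicit use of the implication ``strongly tight $\Rightarrow$ tight''; that last step should be said explicitly, since the corollary asks only for tightness while your argument delivers strong $\omega$-instability. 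Your re-derivation is faithful to the Theorem~\ref{theorem-4.11} template (reduction to a single block $S^{(b_1,\ldots,b_k)}_{(c_1,\ldots,c_k)}$ via finiteness of $\lambda$ and of the $k!$ relabellings, projection to a coordinate with infinite image, appeal to the strong $\omega$-instability of the layers of $S$), and your observation that the finite sets $[I_0^k]^{(\ast)_k}_{\mathscr{I}_\lambda^n(S)}$ must be absorbed into the finite bottom term is exactly the point of the modified chain \eqref{eq-4.5}. The trade-off is clear: the paper's route costs nothing once Theorem~\ref{theorem-4.13} is on record, whereas yours makes the corollary self-contained at the price of repeating the bookkeeping that you yourself identify as the main obstacle.
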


Let $\mathfrak{S}$ be a class of semitopological semigroups. A semigroup $S\in\mathfrak{S}$ is called {\it $H$-closed in} $\mathfrak{S}$, if $S$ is a closed subsemigroup of any semitopological semigroup $T\in\mathfrak{S}$ which contains $S$ both as a subsemigroup and as a topological space. The $H$-closed topological semigroups were introduced by Stepp in \cite{Stepp1969}, and there they were called {\it maximal semigroups}.  An algebraic semigroup $S$ is called:{\it algebraically complete in} $\mathfrak{S}$, if $S$ with any Hausdorff topology $\tau$ such that $(S,\tau)\in\mathfrak{S}$ is $H$-closed in $\mathfrak{S}$. We observe that many distinct types of $H$-closedness of topological and semitopological semigroups studied in \cite{Banakh-Bardyla-2019}--\cite{Chuchman-Gutik-2007}, \cite{Gutik2014}--\cite{Gutik-Pavlyk-2005}, \cite{GutikReiter2009}, \cite{GutikRepovs-2008}.

By Proposition~10 from \cite{GutikLawsonRepov2009} every inverse semigroup $S$ with a tight ideal series is algebraically complete in the class of Hausdorff semitopological inverse semigroups with continuous inversion. Hence Proposition~\ref{proposition-2.5} and Theorems~\ref{theorem-4.11}, \ref{theorem-4.13} imply the following

\begin{theorem}\label{theorem-4.15}
Let $S$ be an inverse semigroup which admits a strongly tight ideal series. Then for every non-zero cardinal $\lambda$ and any positive integer $n\leqslant\lambda$ the semigroup $\mathscr{I}_\lambda^{n}(S)$ is algebraically complete in the class of Hausdorff semitopological inverse semigroups with continuous inversion.
\end{theorem}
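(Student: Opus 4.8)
The plan is to reduce the statement to Proposition~10 of~\cite{GutikLawsonRepov2009}, which asserts that every inverse semigroup with a tight ideal series is algebraically complete in the class of Hausdorff semitopological inverse semigroups with continuous inversion; so it suffices to verify that $\mathscr{I}_\lambda^{n}(S)$ is inverse and admits a tight ideal series. The first of these is immediate from Proposition~\ref{proposition-2.5}$(iii)$: since $S$ is inverse, so is $\mathscr{I}_\lambda^{n}(S)$.

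To obtain a tight ideal series for $\mathscr{I}_\lambda^{n}(S)$ I would start from the given strongly tight ideal series $I_0\subseteq I_1\subseteq\cdots\subseteq I_m=S$ of $S$ and split into two cases according to the cardinality of $\lambda$. If $\lambda$ is infinite, Theorem~\ref{theorem-4.11} shows that the series~\eqref{eq-4.3} is a strongly tight ideal series for $\mathscr{I}_\lambda^{n}(S)$; if $\lambda$ is finite (recall that $n\leqslant\lambda$), Theorem~\ref{theorem-4.13} shows that the series~\eqref{eq-4.5} is a strongly tight ideal series for $\mathscr{I}_\lambda^{n}(S)$. In either case $\mathscr{I}_\lambda^{n}(S)$ possesses a strongly tight ideal series, and since every strongly $\omega$-unstable subset of a semigroup is $\omega$-unstable, such a series is in particular tight; alternatively one may cite directly Corollary~\ref{corollary-4.12} for infinite $\lambda$ and Corollary~\ref{corollary-4.14} for finite $\lambda$.

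With both hypotheses of Proposition~10 of~\cite{GutikLawsonRepov2009} in hand, applying that proposition to $\mathscr{I}_\lambda^{n}(S)$ gives the assertion. The argument is essentially bookkeeping: all the substantive work lies in Theorems~\ref{theorem-4.11} and~\ref{theorem-4.13}, and the only point requiring any care is the case distinction on $\lambda$, so that the appropriate one of those two theorems is invoked; there is no genuine obstacle.
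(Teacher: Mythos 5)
Your proposal is correct and coincides with the paper's own argument: the authors likewise deduce Theorem~\ref{theorem-4.15} by combining Proposition~\ref{proposition-2.5}$(iii)$ with Theorems~\ref{theorem-4.11} and~\ref{theorem-4.13} (noting that a strongly tight ideal series is tight) and then invoking Proposition~10 of~\cite{GutikLawsonRepov2009}. Nothing further is needed.
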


We remark that in the case when $n=1$ the construction of $\mathscr{I}_\lambda^{1}(S)$ preserves the property to be a semigroup with a tight ideal series, and this follows from the following theorem.

\begin{theorem}\label{theorem-4.16}
Let $\lambda$ be any non-zero cardinal, $n$ be a positive integer $n\leqslant \lambda$ and let $I_0\subseteq I_1 \subseteq I_2 \subseteq\cdots\subseteq I_m = S$  be the tight ideal series for a  semigroup $S$. Then the following  series
\begin{equation}\label{eq-4.6}
J_0=\{0\}  \subseteq J_{1}=\overline{[I_0]}^{(\ast)_1}_{\mathscr{I}_\lambda^n(S)}
           \subseteq J_{2}=\overline{[I_1]}^{(\ast)_1}_{\mathscr{I}_\lambda^n(S)} \subseteq
           J_{3}=\overline{[I_2]}^{(\ast)_1}_{\mathscr{I}_\lambda^n(S)}\subseteq \cdots \subseteq J_{m}=\overline{[I_{m-1}]}^{(\ast)_1}_{\mathscr{I}_\lambda^n(S)}\subseteq J_{m+1}=\mathscr{I}_\lambda^1(S)
\end{equation}
is a tight ideal series for the semigroup $\mathscr{I}_\lambda^{1}(S)$ in the case when $\lambda$ is infinite, and
\begin{equation}\label{eq-4.7}
J_0=\{0\}\cup\overline{[I_0]}^{(\ast)_1}_{\mathscr{I}_\lambda^n(S)}
        \subseteq J_{1}=\overline{[I_1]}^{(\ast)_1}_{\mathscr{I}_\lambda^n(S)} \subseteq
           J_{2}=\overline{[I_2]}^{(\ast)_1}_{\mathscr{I}_\lambda^n(S)}\subseteq \cdots \subseteq J_{m-1}=\overline{[I_{m-1}]}^{(\ast)_1}_{\mathscr{I}_\lambda^n(S)}\subseteq J_{m}=\mathscr{I}_\lambda^1(S)
\end{equation}
is a tight ideal series for the semigroup $\mathscr{I}_\lambda^{1}(S)$ in the case when $\lambda$ is finite.
\end{theorem}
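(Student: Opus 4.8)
The plan is to treat the two series \eqref{eq-4.6} and \eqref{eq-4.7} simultaneously, exploiting that by Construction~\ref{construction-1.1} the semigroup $\mathscr{I}_\lambda^1(S)$ is the Brandt $\lambda$-extension $B_\lambda(S)$: its non-zero elements may be written as triples $(a,s,b)$ with $a,b\in\lambda$, $s\in S$, and $(a,s,b)\cdot(c,t,d)$ equals $(a,st,d)$ if $b=c$ and is $0$ otherwise. First I would record the two easy facts. Every term of \eqref{eq-4.6} (resp.\ \eqref{eq-4.7}) is an ideal of $\mathscr{I}_\lambda^1(S)$: this follows from the multiplication formula above together with Lemma~\ref{lemma-4.2} (with $m=1$), exactly as in the first line of the proof of Theorem~\ref{theorem-4.11}. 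And $J_0$ is finite: it is $\{0\}$ in the infinite case, while in the finite case $J_0=\{0\}\cup[I_0]^{(\ast)_1}_{\mathscr{I}_\lambda^n(S)}$ is contained in $\{0\}\cup(\lambda\times I_0\times\lambda)$, which is finite because both $\lambda$ and $I_0$ are finite.

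Next I would identify the successive gaps of the series. Put $D_0^S=I_0$ and $D_k^S=I_k\setminus I_{k-1}$ for $k=1,\ldots,m$. Unravelling the definition of $\overline{[\,\cdot\,]}^{(\ast)_1}_{\mathscr{I}_\lambda^n(S)}$, each non-zero gap $J_j\setminus J_{j-1}$ is precisely the set
\[
D^{\mathscr{I}}_k:=\bigl\{(a,s,b)\colon a,b\in\lambda,\ s\in D_k^S\bigr\}
\]
for an appropriate $k$: in \eqref{eq-4.6} one gets $k=0$ for the gap $J_1\setminus J_0$ and $k=j-1$ for $J_j\setminus J_{j-1}$ with $j\geqslant 2$, whereas in \eqref{eq-4.7} one gets $k=j$ for $J_j\setminus J_{j-1}$. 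Each such $D^{\mathscr{I}}_k$ is infinite: if $k\geqslant 1$ because $D_k^S$ is $\omega$-unstable, hence infinite, in $S$; and the case $k=0$ — which occurs only in the infinite-$\lambda$ series \eqref{eq-4.6} — because $\lambda$ is infinite (and $I_0\neq\varnothing$).

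It then remains to check that each gap $D^{\mathscr{I}}_k$ is $\omega$-unstable in $\mathscr{I}_\lambda^1(S)$. Fix $\alpha=(p,x,q)\in D^{\mathscr{I}}_k$ and an infinite subset $B\subseteq D^{\mathscr{I}}_k$, and assume towards a contradiction that $\alpha B\cup B\alpha\subseteq D^{\mathscr{I}}_k$. Since $0\notin D^{\mathscr{I}}_k$, no product $\alpha\beta$ with $\beta\in B$ may vanish, which by the multiplication formula forces every element of $B$ to have first coordinate $q$; likewise $\beta\alpha\neq 0$ for $\beta\in B$ forces third coordinate $p$. Hence $B\subseteq\{q\}\times D_k^S\times\{p\}$, so its projection to the middle coordinate, $B^S:=\{s\in S\colon(q,s,p)\in B\}$, is an infinite subset of $D_k^S$, and $\alpha B\subseteq D^{\mathscr{I}}_k$, $B\alpha\subseteq D^{\mathscr{I}}_k$ translate into $xB^S\subseteq D_k^S$ and $B^Sx\subseteq D_k^S$. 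If $k=0$ this already contradicts the finiteness of $I_0=D_0^S$; if $k\geqslant 1$, then $x\in D_k^S$ and $xB^S\cup B^Sx\subseteq D_k^S$ contradicts the $\omega$-unstability of $D_k^S$ in $S$. In either case $\alpha B\cup B\alpha\nsubseteq D^{\mathscr{I}}_k$, and the proof is complete.

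I do not expect a real obstacle here; the argument is markedly simpler than that of Theorem~\ref{theorem-4.11}. The structural reason is that for $n=1$ a non-zero product involves only a single pair of matched indices and the ``middle'' multiplication uses the \emph{one} label $x$ of $\alpha$ on both sides, so ordinary $\omega$-unstability (the same element used on the left and on the right) already suffices — this is exactly why the hypothesis may be relaxed from strongly tight (needed for $\mathscr{I}_\lambda^n(S)$ with $n\geqslant 2$ in Theorems~\ref{theorem-4.11} and \ref{theorem-4.13}) to merely tight. The only genuine care needed is the bookkeeping of the finite-$\lambda$ versus infinite-$\lambda$ versions and the correct identification of the bottom gaps of \eqref{eq-4.6} and \eqref{eq-4.7}.
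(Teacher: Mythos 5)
Your proposal is correct and follows essentially the same route as the paper: identify each gap $J_j\setminus J_{j-1}$ with the set of triples whose middle coordinate lies in the corresponding gap $D_k^S$ of $S$, observe that mismatched indices force a product to be $0\notin J_j\setminus J_{j-1}$, and otherwise transfer the problem to the middle coordinate where ordinary $\omega$-unstability of $D_k^S$ (or finiteness of $I_0$) applies. If anything, your derivation that $B\subseteq\{q\}\times D_k^S\times\{p\}$ under the contradiction hypothesis is cleaner than the paper's dichotomy on whether $B$ meets some diagonal block $S^{(i)}_{(i)}$ infinitely, since it also covers the case where $B$ concentrates in an off-diagonal block matched to $\alpha$, where no product vanishes and only the $\omega$-unstability of $D_k^S$ in $S$ (with the same $x$ on both sides) saves the argument.
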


\begin{proof}
We consider the case when cardinal $\lambda$ is infinite. In the other case the proof is similar.

The semigroup operation of $\mathscr{I}_\lambda^{1}(S)$ implies that the the set $J_k$ is ideal in $\mathscr{I}_\lambda^{1}(S)$ for an arbitrary integer $k\in \{0,1,\ldots,m+1\}$.

Fix an arbitrary $k\in \{1,\ldots,m+1\}$. Then for any infinite subset $B$ of $J_{k}\setminus J_{k-1}$ and any $\alpha=
\left(
  \begin{array}{c}
    a \\
    s \\
    b \\
  \end{array}
\right)
\in J_{k}\setminus J_{k-1}$ the following statements hold.
\begin{itemize}
  \item[$(1)$] If $B\cap S^{(i)}_{(i)}$ is infinite for some $i\in\lambda$ then $B\cap S^{(i)}_{(i)}\subseteq [I_{k-1}\setminus I_{k_2}]^{(i)}_{(i)}$. Hence, the semigroup operation of $\mathscr{I}_\lambda^{1}(S)$ implies that $\alpha B\cup B\alpha\nsubseteq J_{k}\setminus J_{k-1}$ in the case when $a=b=i$, because the set $I_{k-1}\setminus I_{k_2}$ is $\omega$-unstable in $S$. Otherwise $0\in \alpha B\cup B\alpha\nsubseteq J_{k}\setminus J_{k-1}$.
  \item[$(2)$] In the other case the semigroup operation of $\mathscr{I}_\lambda^{1}(S)$ implies that $0\in\alpha B\cup B\alpha\nsubseteq J_{k}\setminus J_{k-1}$.
\end{itemize}
Both above statements imply that the set $J_{k}\setminus J_{k-1}$ is $\omega$-unstable in $\mathscr{I}_\lambda^{1}(S)$, which completes the proof of the theorem.
\end{proof}

\section{On a semitopological semigroup $\mathscr{I}_\lambda^n(S)$}

For any element $\alpha=
\left(
  \begin{array}{ccc}
    i_1 & \ldots & i_k \\
    j_1 & \ldots & j_k \\
  \end{array}
\right)
$
of the semigroup  $\mathscr{I}_\lambda^n$ and any $s\in S$ we denote
$\alpha[s]=
\left(
  \begin{array}{ccc}
    i_1 & \ldots & i_k \\
    s & \ldots & s \\
    j_1 & \ldots & j_k \\
  \end{array}
\right)
$
which is the element of $\mathscr{I}_\lambda^n(S)$. Later in this case we shall say that $\alpha[s]$ is the \emph{$s$-extension} of $\alpha$ or $\alpha$ is the \emph{$\mathscr{I}_\lambda^n$-restriction} of $\alpha[s]$.

\begin{proposition}\label{proposition-5.1}
Let $S$ be a monoid, $\lambda$ be any non-zero cardinal, $n$ be an arbitrary positive integer $\leqslant\lambda$, $0<k\leqslant n$ and $\mathscr{I}_\lambda^n(S)$ be a Hausdorff semitopological semigroup. Then for any ordered collections of $k$ distinct elements $(a_1,\ldots,a_k)$ and $(b_1,\ldots,b_k)$ of $\lambda^k$ and any element $\alpha_S\in S^{(a_1,\ldots,a_k)}_{(b_1,\ldots,b_k)}$ there exists an open neighbourhood $U(\alpha_S)$ of $\alpha_S$ such that
\begin{itemize}
  \item $U(\alpha_S)\cap\mathscr{I}_\lambda^{k-1}(S)=\varnothing$ and $U(\alpha_S)\cap\mathscr{I}_\lambda^{k}(S)\subseteq S^{(a_1,\ldots,a_k)}_{(b_1,\ldots,b_k)}$ in the case when $k\geqslant 2$,
  \item $0\notin U(\alpha_S)$ and $U(\alpha_S)\cap\mathscr{I}_\lambda^{1}(S)\subseteq S^{(a_1)}_{(b_1)}$ in the case when $k=1$.
\end{itemize}
Thus $\mathscr{I}_\lambda^{k}(S)$ is a closed subsemigroup of $\mathscr{I}_\lambda^{n}(S)$.
\end{proposition}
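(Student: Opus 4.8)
The plan is to detect, by separate continuity of the multiplication, not merely the domain and range of $\alpha_S$ but its whole ``pairing pattern'', using the rank‑one idempotents that the monoid structure of $S$ provides. Write $\alpha_S=\left(\begin{smallmatrix} a_1 & \cdots & a_k\\ s_1 & \cdots & s_k\\ b_1 & \cdots & b_k\end{smallmatrix}\right)$, let $1$ be the identity of $S$, and for each $i\in\{1,\ldots,k\}$ set $\varepsilon_{a_i}=\left(\begin{smallmatrix} a_i\\ 1\\ a_i\end{smallmatrix}\right)$ and $\varepsilon_{b_i}=\left(\begin{smallmatrix} b_i\\ 1\\ b_i\end{smallmatrix}\right)$, which are idempotents of $\mathscr{I}_\lambda^n(S)$ by Proposition~\ref{proposition-2.1}. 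Define $\psi_i\colon\mathscr{I}_\lambda^n(S)\to\mathscr{I}_\lambda^n(S)$ by $\psi_i(x)=\varepsilon_{a_i}\cdot x\cdot\varepsilon_{b_i}$. First I would check, directly from the definition of the operation in $\mathscr{I}_\lambda^n(S)$, that $\psi_i(x)\neq 0$ if and only if $a_i\in\mathop{\textsf{\textbf{d}}}(x)$ and the underlying partial bijection of $x$ carries $a_i$ to $b_i$; in particular $\psi_i(\alpha_S)=\left(\begin{smallmatrix} a_i\\ s_i\\ b_i\end{smallmatrix}\right)\neq 0$, which is exactly where the hypothesis that $S$ is a monoid is used. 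Since $\mathscr{I}_\lambda^n(S)$ is a semitopological semigroup, $\psi_i$ is continuous (it is the composition of the right translation by $\varepsilon_{b_i}$ with the left translation by $\varepsilon_{a_i}$), and since the space is Hausdorff the singleton $\{0\}$ is closed, so $W_i:=\psi_i^{-1}\bigl(\mathscr{I}_\lambda^n(S)\setminus\{0\}\bigr)$ is an open neighbourhood of $\alpha_S$.

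Next I would put $U(\alpha_S)=\bigcap_{i=1}^{k}W_i$, an open neighbourhood of $\alpha_S$. For any $x\in U(\alpha_S)$ one has $a_i\in\mathop{\textsf{\textbf{d}}}(x)$ and $x$ maps $a_i$ to $b_i$ for every $i$, so $\{a_1,\ldots,a_k\}\subseteq\mathop{\textsf{\textbf{d}}}(x)$ and hence $\operatorname{rank}x\geqslant k$. This already yields $U(\alpha_S)\cap\mathscr{I}_\lambda^{k-1}(S)=\varnothing$ when $k\geqslant 2$, and $0\notin U(\alpha_S)$ when $k=1$ (because $\psi_1(0)=0$). If in addition $x\in\mathscr{I}_\lambda^{k}(S)$, then $\operatorname{rank}x=k$, whence $\mathop{\textsf{\textbf{d}}}(x)=\{a_1,\ldots,a_k\}$; since $x$ carries each $a_i$ onto $b_i$, also $\mathop{\textsf{\textbf{r}}}(x)=\{b_1,\ldots,b_k\}$ with the same pairing, so $x$ has the form $\left(\begin{smallmatrix} a_1 & \cdots & a_k\\ t_1 & \cdots & t_k\\ b_1 & \cdots & b_k\end{smallmatrix}\right)$, i.e. $x\in S^{(a_1,\ldots,a_k)}_{(b_1,\ldots,b_k)}$. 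This establishes both displayed properties.

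For the final assertion I would show that $\mathscr{I}_\lambda^{n}(S)\setminus\mathscr{I}_\lambda^{k}(S)$ is open. If $\gamma_S\in\mathscr{I}_\lambda^{n}(S)$ with $\operatorname{rank}\gamma_S=m$ and $k<m\leqslant n$, then applying the neighbourhood statement just proved (whose argument is uniform in the index) with $m$ in place of $k$ gives an open $U(\gamma_S)\ni\gamma_S$ with $U(\gamma_S)\cap\mathscr{I}_\lambda^{m-1}(S)=\varnothing$; as $k\leqslant m-1$ we have $\mathscr{I}_\lambda^{k}(S)\subseteq\mathscr{I}_\lambda^{m-1}(S)$, so $U(\gamma_S)\cap\mathscr{I}_\lambda^{k}(S)=\varnothing$. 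Since $\mathscr{I}_\lambda^{k}(S)$ is a subsemigroup of $\mathscr{I}_\lambda^{n}(S)$ by Construction~\ref{construction-1.1}, it is therefore a closed subsemigroup. The step that requires care is the second bullet: separating only the set $\mathop{\textsf{\textbf{d}}}(\alpha_S)$ and the set $\mathop{\textsf{\textbf{r}}}(\alpha_S)$ would merely force $\mathop{\textsf{\textbf{d}}}(x)=\{a_1,\ldots,a_k\}$ and $\mathop{\textsf{\textbf{r}}}(x)=\{b_1,\ldots,b_k\}$, but not the bijection $a_i\mapsto b_i$; this is precisely why the two‑sided sandwich $\varepsilon_{a_i}\cdot x\cdot\varepsilon_{b_i}$ is used, and why one needs $\psi_i(\alpha_S)\neq 0$, which relies on $S$ being a monoid.
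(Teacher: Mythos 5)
Your proof is correct, and it takes a genuinely different route from the paper's. The paper sandwiches with the single pair of rank-$k$ idempotents $\varepsilon_1[1_S]$ and $\varepsilon_2[1_S]$ (with domains $\{a_1,\ldots,a_k\}$ and $\{b_1,\ldots,b_k\}$), writes $S^{(a_1,\ldots,a_k)}_{(b_1,\ldots,b_k)}$ as $\varepsilon_1[1_S]\cdot\mathscr{I}_\lambda^n(S)\cdot\varepsilon_2[1_S]$ minus a finite union $A_{\alpha_S}$ of sets $\overline{\varepsilon}_1[1_S]\cdot\mathscr{I}_\lambda^n(S)\cdot\overline{\varepsilon}_2[1_S]$ over strictly smaller idempotents, invokes the closedness of $eT$ and $Te$ in a Hausdorff semitopological semigroup together with the finiteness of ${\downarrow}\varepsilon$ in $E(\mathscr{I}_\lambda^n)$ to see that $A_{\alpha_S}$ is closed, and then pulls back by separate continuity. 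You instead use $k$ rank-one sandwiches $\psi_i(x)=\varepsilon_{a_i}\cdot x\cdot\varepsilon_{b_i}$ and only the closedness of the single point $\{0\}$. Your version needs fewer auxiliary facts, and, more importantly, it detects the actual pairing $a_i\mapsto b_i$ rather than merely the sets $\mathop{\textsf{\textbf{d}}}(x)$ and $\mathop{\textsf{\textbf{r}}}(x)$: the paper's set $A_{\alpha_S}$ only contains elements of rank $<k$, so the argument as written would also admit elements of $\mathscr{I}_\lambda^{k}(S)$ whose underlying bijection carries $\{a_1,\ldots,a_k\}$ onto $\{b_1,\ldots,b_k\}$ with a permuted pairing, and such elements do not lie in $S^{(a_1,\ldots,a_k)}_{(b_1,\ldots,b_k)}$ as that set is defined. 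Your closing remark identifies exactly this subtlety, and your construction closes it; the deduction of closedness of $\mathscr{I}_\lambda^{k}(S)$ from the rank-$m$ case of the neighbourhood statement is also sound.
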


\begin{proof}
Fix an arbitrary $k\leqslant n$ and an arbitrary
$\alpha_S=\left(
\begin{array}{ccc}
a_1 & \ldots & a_k \\
s_1 & \ldots & s_k \\
b_1 & \ldots & b_k \\
\end{array}
\right)
\in S^{a_1,\ldots,a_k}_{b_1,\ldots,b_k}$.
It is obvious that $\varepsilon_{1}[1_S]\cdot \alpha_S\cdot\varepsilon_{2}[1_S]=\alpha_S$, where
\begin{equation*}
\varepsilon_{1}[1_S]=\left(
\begin{array}{ccc}
a_1 & \ldots & a_k \\
1_S & \ldots & 1_S \\
a_1 & \ldots & a_k \\
\end{array}
\right), \qquad
\varepsilon_{2}[1_S]=
\left(
\begin{array}{ccc}
b_1 & \ldots & b_k \\
1_S & \ldots & 1_S \\
b_1 & \ldots & b_k \\
\end{array}
\right),
\end{equation*}
and $1_S$ is the unit element of $S$.

Simple calculation implies that
\begin{equation*}
  S^{(a_1,\ldots,a_k)}_{(b_1,\ldots,b_k)}=\varepsilon_{1}[1_S]\cdot \mathscr{I}_\lambda^{n}(S)\cdot \varepsilon_{2}[1_S]\setminus \bigcup\left\{ \overline{\varepsilon}_{1}[1_S]\cdot \mathscr{I}_\lambda^{n}(S)\cdot \overline{\varepsilon}_{2}[1_S]\colon \overline{\varepsilon}_{1}<\varepsilon_{1} \hbox{~and~} \overline{\varepsilon}_{2}<\varepsilon_{2} \hbox{~in~} E(\mathscr{I}_\lambda^n)\right\}
\end{equation*}

We observe that $eT$ and $Te$ are closed subset in an arbitrary Hausdorff semitopological semigroup $T$ for any its idempotent $e$. Since for any idempotent $\varepsilon\in \mathscr{I}_\lambda^n$ the set ${\downarrow}\varepsilon=\{\iota\in E(\mathscr{I}_\lambda^n)\colon \iota\leqslant \varepsilon\}$ is finite, the set
\begin{equation*}
  A_{\alpha_S}=\bigcup\left\{ \overline{\varepsilon}_{1}[1_S]\cdot \mathscr{I}_\lambda^{n}(S)\cdot \overline{\varepsilon}_{2}[1_S]\colon \overline{\varepsilon}_{1}<\varepsilon_{1} \hbox{~and~} \overline{\varepsilon}_{2}<\varepsilon_{2}\right\}
\end{equation*}
is closed in $\mathscr{I}_\lambda^{n}(S)$. Fix an arbitrary open neighbourhood $W(\alpha_S)$ of $\alpha_S$ such that $W(\alpha_S)\cap A_{\alpha_S}=\varnothing$. The separate continuity of the semigroup operation on $\mathscr{I}_\lambda^{n}(S)$ implies that there exist an open neighbourhood $U(\alpha_S)$ of $\alpha_S$ such that $\varepsilon_{1}[1_S]\cdot U(\alpha_S)\cdot\varepsilon_{2}[1_S]\subseteq W(\alpha_S)$. The neighbourhood $U(\alpha_S)$ is requested. Indeed, if there exists $\beta_S\in \mathscr{I}_\lambda^{k}(S)\setminus S^{(a_1,\ldots,a_k)}_{(b_1,\ldots,b_k)}$ then $\varepsilon_{1}[1_S]\cdot \beta_S\cdot\varepsilon_{2}[1_S]\in A_{\alpha_S}$.
\end{proof}

\begin{remark}\label{remark-5.2}
We observe that in Proposition~\ref{proposition-5.1} we may assume that the neighbourhood  $U(\alpha_S)$ may be chosen with the following property $\varepsilon_{1}[1_S]\cdot U(\alpha_S)\cdot\varepsilon_{2}[1_S] \subseteq S^{(a_1,\ldots,a_k)}_{(b_1,\ldots,b_k)}$.
\end{remark}

\begin{proposition}\label{proposition-5.3}
Let $S$ be a monoid, $\lambda$ be any non-zero cardinal, $n$ be an arbitrary positive integer $\leqslant\lambda$, $0<k\leqslant n$ and $\mathscr{I}_\lambda^n(S)$ be a Hausdorff semitopological semigroup. Then for any  ordered collections of $k$ distinct elements $(a_1,\ldots,a_k)$, $(b_1,\ldots,b_k)$, $(c_1,\ldots,c_k)$, and $(d_1,\ldots,d_k)$ of $\lambda^k$ the subspaces $S^{(a_1,\ldots,a_k)}_{(b_1,\ldots,b_k)}$ and $S^{(c_1,\ldots,c_k)}_{(d_1,\ldots,d_k)}$ are homeomorphic, and moreover $S^{(a_1,\ldots,a_k)}_{(a_1,\ldots,a_k)}$ and $S^{(c_1,\ldots,c_k)}_{(c_1,\ldots,c_k)}$ are topologically isomorphic subsemigroups of $\mathscr{I}_\lambda^n(S)$.
\end{proposition}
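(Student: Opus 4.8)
The plan is to realise every homeomorphism and topological isomorphism asserted here as a composition of one-sided translations in $\mathscr{I}_\lambda^n(S)$, using only the separate continuity of the multiplication together with the fact that the monoid $S$ has a two-sided identity $1_S$.

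First I would fix the relevant \emph{transition} elements. Let $\theta\in\mathscr{I}_\lambda^n$ be the partial bijection $\left(\begin{smallmatrix} a_1 & \cdots & a_k \\ c_1 & \cdots & c_k \end{smallmatrix}\right)$ and $\eta\in\mathscr{I}_\lambda^n$ the partial bijection $\left(\begin{smallmatrix} d_1 & \cdots & d_k \\ b_1 & \cdots & b_k \end{smallmatrix}\right)$, with inverses $\theta^{-1}$ and $\eta^{-1}$, and consider their $1_S$-extensions $\theta[1_S],\theta^{-1}[1_S],\eta[1_S],\eta^{-1}[1_S]\in\mathscr{I}_\lambda^n(S)$. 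Define
\begin{equation*}
\varphi\colon\beta_S\mapsto\theta[1_S]\cdot\beta_S\cdot\eta[1_S]
\qquad\hbox{and}\qquad
\psi\colon\alpha_S\mapsto\theta^{-1}[1_S]\cdot\alpha_S\cdot\eta^{-1}[1_S].
\end{equation*}
A direct application of rule~$(iii)$ of Construction~\ref{construction-1.1}, together with $1_S\cdot s=s\cdot 1_S=s$, shows that $\varphi$ carries the element of $S^{(c_1,\ldots,c_k)}_{(d_1,\ldots,d_k)}$ with values $s_1,\ldots,s_k$ to the element of $S^{(a_1,\ldots,a_k)}_{(b_1,\ldots,b_k)}$ with the same values (and symmetrically for $\psi$). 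Moreover $\theta[1_S]\cdot\theta^{-1}[1_S]$ is the $1_S$-extension of the identity map on $\{a_1,\ldots,a_k\}$, hence acts as a left identity on every element of $\mathscr{I}_\lambda^n(S)$ with domain $\{a_1,\ldots,a_k\}$, and similarly $\eta^{-1}[1_S]\cdot\eta[1_S]$ acts as a right identity on every element with range $\{b_1,\ldots,b_k\}$ (and the analogous statements hold for $\theta^{-1}[1_S]\cdot\theta[1_S]$ and $\eta[1_S]\cdot\eta^{-1}[1_S]$). This gives $\psi\circ\varphi=\mathrm{id}$ and $\varphi\circ\psi=\mathrm{id}$ on the two blocks, so $\varphi$ and $\psi$ are mutually inverse bijections between $S^{(c_1,\ldots,c_k)}_{(d_1,\ldots,d_k)}$ and $S^{(a_1,\ldots,a_k)}_{(b_1,\ldots,b_k)}$.

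Next I would use separate continuity: the left translation $x\mapsto\theta[1_S]\cdot x$ and the right translation $x\mapsto x\cdot\eta[1_S]$ are continuous self-maps of $\mathscr{I}_\lambda^n(S)$, hence their composition $\varphi$ is continuous, and since $\varphi$ takes its values inside the subspace $S^{(a_1,\ldots,a_k)}_{(b_1,\ldots,b_k)}$ it is continuous as a map into that subspace; symmetrically $\psi$ is continuous. Therefore $\varphi$ is a homeomorphism, which settles the first assertion. For the second, specialise to the case $b_i=a_i$ and $d_i=c_i$ for all $i$, so that $\eta=\theta^{-1}$ and $\varphi\colon\beta_S\mapsto\theta[1_S]\cdot\beta_S\cdot\theta^{-1}[1_S]$ maps $S^{(c_1,\ldots,c_k)}_{(c_1,\ldots,c_k)}$ bijectively and homeomorphically onto $S^{(a_1,\ldots,a_k)}_{(a_1,\ldots,a_k)}$; both are subsemigroups by Proposition~\ref{proposition-2.4}. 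Writing $\varepsilon=\theta^{-1}[1_S]\cdot\theta[1_S]$ (the identity of the subsemigroup $S^{(c_1,\ldots,c_k)}_{(c_1,\ldots,c_k)}$) and using $\beta_S\cdot\varepsilon=\beta_S$, one computes $\varphi(\beta_S)\cdot\varphi(\gamma_S)=\theta[1_S]\cdot\beta_S\cdot\varepsilon\cdot\gamma_S\cdot\theta^{-1}[1_S]=\theta[1_S]\cdot\beta_S\cdot\gamma_S\cdot\theta^{-1}[1_S]=\varphi(\beta_S\cdot\gamma_S)$, so $\varphi$ is a homomorphism, and likewise $\psi$; hence $\varphi$ is a topological isomorphism.

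I do not expect a genuine obstacle here — the whole argument is bookkeeping. The two points that need care are $(i)$ choosing the transition elements $\theta$ and $\eta$ (rather than some permuted variants) so that $\varphi$ and $\psi$ land in the intended blocks and are literally mutually inverse, using that $1_S$ neutralises the extra factors; and $(ii)$ observing that separate continuity already suffices, so that no joint-continuity or subspace-closedness subtlety arises: continuity of $\varphi$ into the subspace follows at once from the fact that its image lies in that subspace.
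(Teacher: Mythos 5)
Your proof is correct and follows essentially the same route as the paper: both realise the homeomorphism as $\beta_S\mapsto\theta[1_S]\cdot\beta_S\cdot\eta[1_S]$ for suitable $1_S$-extensions of transition partial bijections, check the two sandwiching maps are mutually inverse on the blocks, and invoke separate continuity of translations; your explicit verification of the homomorphism property via the idempotent $\varepsilon=\theta^{-1}[1_S]\cdot\theta[1_S]$ is just a spelled-out version of what the paper declares obvious.
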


\begin{proof}
Since $\mathscr{I}_\lambda^n(S)$ is a semitopological semigroup, the restrictions of the following maps
\begin{equation*}
  ^{(a_1,\ldots,a_k)}_{(b_1,\ldots,b_k)}\mathfrak{h}^{(c_1,\ldots,c_k)}_{(d_1,\ldots,d_k)}\colon \mathscr{I}_\lambda^n(S) \to \mathscr{I}_\lambda^n(S), \; \alpha\mapsto
  \left(
\begin{array}{ccc}
c_1 & \ldots & c_k \\
1_S & \ldots & 1_S \\
a_1 & \ldots & a_k \\
\end{array}
\right)
\cdot \alpha \cdot
  \left(
\begin{array}{ccc}
b_1 & \ldots & b_k \\
1_S & \ldots & 1_S \\
d_1 & \ldots & d_k \\
\end{array}
\right)
\end{equation*}
and
\begin{equation*}
  ^{(c_1,\ldots,c_k)}_{(d_1,\ldots,d_k)}\mathfrak{h}^{(a_1,\ldots,a_k)}_{(b_1,\ldots,b_k)}\colon \mathscr{I}_\lambda^n(S) \to \mathscr{I}_\lambda^n(S), \; \alpha\mapsto
  \left(
\begin{array}{ccc}
a_1 & \ldots & a_k \\
1_S & \ldots & 1_S \\
c_1 & \ldots & c_k \\
\end{array}
\right)
\cdot \alpha \cdot
  \left(
\begin{array}{ccc}
d_1 & \ldots & d_k \\
1_S & \ldots & 1_S \\
b_1 & \ldots & b_k \\
\end{array}
\right)
\end{equation*}
on the subspaces $S^{(a_1,\ldots,a_k)}_{(b_1,\ldots,b_k)}$ and $S^{(c_1,\ldots,c_k)}_{(d_1,\ldots,d_k)}$, respectively, are mutually inverse, and hence $S^{(a_1,\ldots,a_k)}_{(b_1,\ldots,b_k)}$ and $S^{(c_1,\ldots,c_k)}_{(d_1,\ldots,d_k)}$ are homeomorphic subspaces in $\mathscr{I}_\lambda^n(S)$. Also, it is obvious that in the case of subsemigroups $S^{(a_1,\ldots,a_k)}_{(a_1,\ldots,a_k)}$ and $S^{(c_1,\ldots,c_k)}_{(c_1,\ldots,c_k)}$ so defined restrictions of maps are topological isomorphisms.
\end{proof}

For any ordered collections of $k$ distinct elements $(a_1,\ldots,a_k)$ and $(b_1,\ldots,b_k)$ of $\lambda^k$ we define a map
\begin{equation*}
  \mathfrak{f}^{(a_1,\ldots,a_k)}_{(b_1,\ldots,b_k)}\colon \mathscr{I}_\lambda^n(S) \to \mathscr{I}_\lambda^n(S), \; \alpha\mapsto
  \left(
\begin{array}{ccc}
a_1 & \ldots & a_k \\
1_S & \ldots & 1_S \\
a_1 & \ldots & a_k \\
\end{array}
\right)
\cdot \alpha \cdot
  \left(
\begin{array}{ccc}
b_1 & \ldots & b_k \\
1_S & \ldots & 1_S \\
b_1 & \ldots & b_k \\
\end{array}
\right).
\end{equation*}

Proposition~\ref{proposition-5.1} implies the following corollary.

\begin{corollary}\label{corollary-5.4}
Let $S$ be a monoid, $\lambda$ be any non-zero cardinal, $n$ be an arbitrary positive integer $\leqslant\lambda$, $0<k\leqslant n$ and $\mathscr{I}_\lambda^n(S)$ be a Hausdorff semitopological semigroup. Then the set
\begin{equation*}
  {\Uparrow}S^{(a_1,\ldots,a_k)}_{(b_1,\ldots,b_k)}= \left(S^{(a_1,\ldots,a_k)}_{(b_1,\ldots,b_k)}\right)\left(\mathfrak{f}^{(a_1,\ldots,a_k)}_{(b_1,\ldots,b_k)}\right)^{-1}
\end{equation*}
is open in $\mathscr{I}_\lambda^n(S)$
for any ordered collections of $k$ distinct elements $(a_1,\ldots,a_k)$ and $(b_1,\ldots,b_k)$ of $\lambda^k$.
\end{corollary}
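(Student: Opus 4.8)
The plan is to realise $\mathfrak{f}:=\mathfrak{f}^{(a_1,\ldots,a_k)}_{(b_1,\ldots,b_k)}$ as a continuous retraction of $\mathscr{I}_\lambda^n(S)$ and then to feed the local information supplied by Proposition~\ref{proposition-5.1} and Remark~\ref{remark-5.2} into this structure. First I would record that, directly from its definition, $(\alpha)\mathfrak{f}=\varepsilon_{1}[1_S]\cdot\alpha\cdot\varepsilon_{2}[1_S]$ for every $\alpha\in\mathscr{I}_\lambda^n(S)$, where $\varepsilon_{1}[1_S]$ and $\varepsilon_{2}[1_S]$ are the elements introduced in the proof of Proposition~\ref{proposition-5.1}; by Proposition~\ref{proposition-2.1} they are idempotents of $\mathscr{I}_\lambda^n(S)$, since $1_S\in E(S)$. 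As $\mathscr{I}_\lambda^n(S)$ is a semitopological semigroup, the left translation by $\varepsilon_{1}[1_S]$ and the right translation by $\varepsilon_{2}[1_S]$ are continuous, hence so is their composite $\mathfrak{f}$. Moreover, using that $\varepsilon_{1}[1_S]$ and $\varepsilon_{2}[1_S]$ are idempotents,
\begin{equation*}
((\alpha)\mathfrak{f})\mathfrak{f}=\varepsilon_{1}[1_S]\varepsilon_{1}[1_S]\cdot\alpha\cdot\varepsilon_{2}[1_S]\varepsilon_{2}[1_S]=\varepsilon_{1}[1_S]\cdot\alpha\cdot\varepsilon_{2}[1_S]=(\alpha)\mathfrak{f},
\end{equation*}
so $\mathfrak{f}\circ\mathfrak{f}=\mathfrak{f}$.

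Next I would verify that ${\Uparrow}S^{(a_1,\ldots,a_k)}_{(b_1,\ldots,b_k)}=\bigl(S^{(a_1,\ldots,a_k)}_{(b_1,\ldots,b_k)}\bigr)\mathfrak{f}^{-1}$ is a neighbourhood of each of its points. Fix $\gamma\in{\Uparrow}S^{(a_1,\ldots,a_k)}_{(b_1,\ldots,b_k)}$ and put $\alpha_S:=(\gamma)\mathfrak{f}$, so $\alpha_S\in S^{(a_1,\ldots,a_k)}_{(b_1,\ldots,b_k)}$. By Proposition~\ref{proposition-5.1} together with Remark~\ref{remark-5.2} there is an open neighbourhood $U(\alpha_S)$ of $\alpha_S$ in $\mathscr{I}_\lambda^n(S)$ with $(U(\alpha_S))\mathfrak{f}=\varepsilon_{1}[1_S]\cdot U(\alpha_S)\cdot\varepsilon_{2}[1_S]\subseteq S^{(a_1,\ldots,a_k)}_{(b_1,\ldots,b_k)}$. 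By continuity of $\mathfrak{f}$ the set $(U(\alpha_S))\mathfrak{f}^{-1}$ is open, and it contains $\gamma$ because $(\gamma)\mathfrak{f}=\alpha_S\in U(\alpha_S)$.

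It then remains to show $(U(\alpha_S))\mathfrak{f}^{-1}\subseteq{\Uparrow}S^{(a_1,\ldots,a_k)}_{(b_1,\ldots,b_k)}$. Take any $\delta\in(U(\alpha_S))\mathfrak{f}^{-1}$, so $(\delta)\mathfrak{f}\in U(\alpha_S)$; applying $\mathfrak{f}$ once more and using $\mathfrak{f}\circ\mathfrak{f}=\mathfrak{f}$ yields $(\delta)\mathfrak{f}=((\delta)\mathfrak{f})\mathfrak{f}\in(U(\alpha_S))\mathfrak{f}\subseteq S^{(a_1,\ldots,a_k)}_{(b_1,\ldots,b_k)}$, hence $\delta\in{\Uparrow}S^{(a_1,\ldots,a_k)}_{(b_1,\ldots,b_k)}$. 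Therefore every point of ${\Uparrow}S^{(a_1,\ldots,a_k)}_{(b_1,\ldots,b_k)}$ has an open neighbourhood inside it, and the set is open in $\mathscr{I}_\lambda^n(S)$.

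The argument is short because the real content is already in Proposition~\ref{proposition-5.1} and Remark~\ref{remark-5.2}. The one point that requires care — and the reason continuity of $\mathfrak{f}$ by itself does not finish the proof — is that $U(\alpha_S)$ need not be contained in $S^{(a_1,\ldots,a_k)}_{(b_1,\ldots,b_k)}$; one must use the retraction identity $\mathfrak{f}\circ\mathfrak{f}=\mathfrak{f}$ (equivalently, the idempotency of $\varepsilon_{1}[1_S]$ and $\varepsilon_{2}[1_S]$, which is where the monoid hypothesis on $S$ enters) to transfer membership from $U(\alpha_S)$ back into $S^{(a_1,\ldots,a_k)}_{(b_1,\ldots,b_k)}$ along $\mathfrak{f}$. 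I expect no further obstacle.
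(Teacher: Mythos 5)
Your proof is correct and is precisely the argument the paper leaves implicit when it states that Proposition~\ref{proposition-5.1} implies the corollary: one pulls the neighbourhood supplied by Remark~\ref{remark-5.2} back along the map $\mathfrak{f}^{(a_1,\ldots,a_k)}_{(b_1,\ldots,b_k)}$, which is continuous because it is a composite of translations in a semitopological semigroup, and uses the retraction identity $\mathfrak{f}\circ\mathfrak{f}=\mathfrak{f}$ (coming from the idempotency of $\varepsilon_1[1_S]$ and $\varepsilon_2[1_S]$) to land back in $S^{(a_1,\ldots,a_k)}_{(b_1,\ldots,b_k)}$. Your closing remark correctly identifies the one non-trivial point, namely that continuity of $\mathfrak{f}$ alone does not suffice since $S^{(a_1,\ldots,a_k)}_{(b_1,\ldots,b_k)}$ need not be open.
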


We recall that a topological space $X$ is said to be
\begin{itemize}
  \item \emph{compact} if each open cover of $X$ has a finite subcover;
  \item \emph{H-closed} if $X$ is a closed subspace of every Hausdorff topological space in which it contained.
\end{itemize}
It is well known that every Hausdorff compact space is H-closed, and every regular H-closed topological space is compact (see \cite[3.12.5]{Engelking-1989}).

\begin{lemma}\label{lemma-5.5}
Let $S$ be a monoid, $\lambda$ be any non-zero cardinal, $n$ be an arbitrary positive integer $\leqslant\lambda$, $0<k\leqslant n$ and $\mathscr{I}_\lambda^n(S)$ be a Hausdorff semitopological semigroup. If $S^{(a)}_{(b)}$ is a closed subset of $\mathscr{I}_\lambda^n(S)$ for any $a,b\in\lambda$ then $S^{(a_1,\ldots,a_k)}_{(b_1,\ldots,b_k)}$ is a closed subspace of $\mathscr{I}_\lambda^n(S)$ for any ordered collections of $k$ distinct elements $(a_1,\ldots,a_k)$ and $(b_1,\ldots,b_k)$ of $\lambda^k$.
\end{lemma}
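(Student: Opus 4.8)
The plan is to reduce the closedness of an arbitrary $S^{(a_1,\ldots,a_k)}_{(b_1,\ldots,b_k)}$ to the hypothesis about rank-one blocks $S^{(a)}_{(b)}$ by exhibiting $S^{(a_1,\ldots,a_k)}_{(b_1,\ldots,b_k)}$ as the intersection of a closed set with a set of the form $\varepsilon_1[1_S]\cdot\mathscr{I}_\lambda^n(S)\cdot\varepsilon_2[1_S]$, and then handling the ``lower rank'' part via the already-established closedness of $\mathscr{I}_\lambda^{k-1}(S)$ (Proposition~\ref{proposition-5.1}). Recall from the proof of Proposition~\ref{proposition-5.1} that, with $\varepsilon_1[1_S]=\left(\begin{smallmatrix} a_1 & \cdots & a_k \\ 1_S & \cdots & 1_S \\ a_1 & \cdots & a_k\end{smallmatrix}\right)$ and $\varepsilon_2[1_S]=\left(\begin{smallmatrix} b_1 & \cdots & b_k \\ 1_S & \cdots & 1_S \\ b_1 & \cdots & b_k\end{smallmatrix}\right)$, one has the identity
\begin{equation*}
S^{(a_1,\ldots,a_k)}_{(b_1,\ldots,b_k)}=\varepsilon_1[1_S]\cdot\mathscr{I}_\lambda^n(S)\cdot\varepsilon_2[1_S]\setminus A,
\end{equation*}
where $A$ is the (finite) union of sets $\overline{\varepsilon}_1[1_S]\cdot\mathscr{I}_\lambda^n(S)\cdot\overline{\varepsilon}_2[1_S]$ over proper subidempotents $\overline{\varepsilon}_1<\varepsilon_1$, $\overline{\varepsilon}_2<\varepsilon_2$ in $E(\mathscr{I}_\lambda^n)$, and this $A$ is closed (each $eT$, $Te$ is closed in a Hausdorff semitopological semigroup, and there are only finitely many summands). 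So $\varepsilon_1[1_S]\cdot\mathscr{I}_\lambda^n(S)\cdot\varepsilon_2[1_S]$ is the closure candidate we must control.

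First I would observe that $E:=\varepsilon_1[1_S]\cdot\mathscr{I}_\lambda^n(S)\cdot\varepsilon_2[1_S]$ need not itself be closed in general, so I would instead work inside the closed subspace $\mathscr{I}_\lambda^k(S)$ (closed by Proposition~\ref{proposition-5.1}) and show $S^{(a_1,\ldots,a_k)}_{(b_1,\ldots,b_k)}$ is closed there. Within $\mathscr{I}_\lambda^k(S)$, an element lies in $E\cap\mathscr{I}_\lambda^k(S)$ iff its domain is contained in $\{a_1,\ldots,a_k\}$ and its range is contained in $\{b_1,\ldots,b_k\}$; combined with the rank constraint $\le k$ this forces the element to be either a full $k\times k$ block on exactly these rows and columns, i.e.\ an element of $S^{(a_1,\ldots,a_k)}_{(b_1,\ldots,b_k)}$ up to permutation of columns, or an element of strictly smaller rank, i.e.\ an element of $\mathscr{I}_\lambda^{k-1}(S)$. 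Hence
\begin{equation*}
S^{(a_1,\ldots,a_k)}_{(b_1,\ldots,b_k)}=\bigl(\varepsilon_1[1_S]\cdot\mathscr{I}_\lambda^k(S)\cdot\varepsilon_2[1_S]\bigr)\setminus\mathscr{I}_\lambda^{k-1}(S),
\end{equation*}
and since $\mathscr{I}_\lambda^{k-1}(S)$ is closed (Proposition~\ref{proposition-5.1}), it remains only to prove that $\varepsilon_1[1_S]\cdot\mathscr{I}_\lambda^k(S)\cdot\varepsilon_2[1_S]$ is closed in $\mathscr{I}_\lambda^k(S)$.

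For that last point I would use the hypothesis on rank-one blocks together with the homeomorphisms of Proposition~\ref{proposition-5.3}. The set $\varepsilon_1[1_S]\cdot\mathscr{I}_\lambda^k(S)\cdot\varepsilon_2[1_S]$ is precisely $S^{(a_1,\ldots,a_k)}_{(b_1,\ldots,b_k)}\cup\mathscr{I}_\lambda^{k-1}(S)$ intersected appropriately; more usefully, the map $\gamma\mapsto\varepsilon_1[1_S]\cdot\gamma\cdot\varepsilon_2[1_S]$ is, for each fixed idempotent data, a retraction-like separately-continuous map, and its fixed-point set together with the closedness of translates $eT,Te$ gives that $E$ is an intersection of closed sets $\varepsilon_1[1_S]\mathscr{I}_\lambda^k(S)$ and $\mathscr{I}_\lambda^k(S)\varepsilon_2[1_S]$. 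The main obstacle I anticipate is exactly this reduction of the $k\times k$ block to rank-one data: one has to verify that being a full block on rows $\{a_1,\ldots,a_k\}$ and columns $\{b_1,\ldots,b_k\}$ (rather than a lower-rank partial map) is a closed condition inside $\varepsilon_1[1_S]\mathscr{I}_\lambda^k(S)\varepsilon_2[1_S]$, and here I would invoke the rank-one hypothesis via the projections $\mathfrak{f}^{(a_r)}_{(b_r)}$ (or the maps $\mathfrak{h}$ of Proposition~\ref{proposition-5.3}) which, being compositions of left and right translations, are separately continuous; the preimage of the closed set $S^{(a_r)}_{(b_r)}$ under the appropriate such map, intersected over $r=1,\ldots,k$, yields exactly $S^{(a_1,\ldots,a_k)}_{(b_1,\ldots,b_k)}$ as a relatively closed subset, completing the argument.
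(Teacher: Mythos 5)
Your final paragraph is, in substance, exactly the paper's proof: the maps $\mathfrak{f}^{(a_r)}_{(b_r)}\colon\alpha\mapsto\varepsilon[1_S]^{(a_r)}\cdot\alpha\cdot\varepsilon[1_S]^{(b_r)}$ are continuous (compositions of a left and a right translation in a semitopological semigroup), the preimages $\bigl(S^{(a_r)}_{(b_r)}\bigr)\bigl(\mathfrak{f}^{(a_r)}_{(b_r)}\bigr)^{-1}$ are therefore closed by hypothesis, and their intersection over $r=1,\ldots,k$ together with the closed set $\mathscr{I}_\lambda^k(S)$ (Proposition~\ref{proposition-5.1}) is precisely $S^{(a_1,\ldots,a_k)}_{(b_1,\ldots,b_k)}$ — requiring each pair $(a_r,b_r)$ separately to lie in the graph pins down the matching $a_r\mapsto b_r$, and the rank bound kills anything larger. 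That argument is complete and correct on its own.

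The intermediate reduction you route through first, however, is broken in two ways and should be discarded. First, the identity $S^{(a_1,\ldots,a_k)}_{(b_1,\ldots,b_k)}=\bigl(\varepsilon_1[1_S]\cdot\mathscr{I}_\lambda^k(S)\cdot\varepsilon_2[1_S]\bigr)\setminus\mathscr{I}_\lambda^{k-1}(S)$ is false for $k\geqslant 2$: the right-hand side contains every rank-$k$ element whose underlying bijection carries $\{a_1,\ldots,a_k\}$ onto $\{b_1,\ldots,b_k\}$ in \emph{any} order, i.e.\ the union of the sets $S^{(a_1,\ldots,a_k)}_{(b_{(1)\pi},\ldots,b_{(k)\pi})}$ over all permutations $\pi$ — you note the permutation issue yourself but then drop it from the displayed identity. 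Second, and more seriously, even with the identity corrected, the inference ``$E$ closed and $\mathscr{I}_\lambda^{k-1}(S)$ closed, hence it remains only to check $E$ is closed'' is invalid: the difference of two closed sets is the intersection of a closed set with an open set and is in general neither; Proposition~\ref{proposition-5.1} gives each point of the block a neighbourhood missing $\mathscr{I}_\lambda^{k-1}(S)$, which is the wrong direction for showing that points of $\mathscr{I}_\lambda^{k-1}(S)$ cannot be limits of the block. Fortunately none of this is needed: deleting everything before ``the main obstacle I anticipate'' leaves a correct proof that matches the paper's.
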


\begin{proof}
For any $a,b\in\lambda$ the map
\begin{equation*}
  \mathfrak{f}^{(a)}_{(b)}\colon \mathscr{I}_\lambda^n(S) \to \mathscr{I}_\lambda^n(S), \; \alpha\mapsto
  \left(
\begin{array}{c}
a   \\
1_S \\
a \\
\end{array}
\right)
\cdot \alpha \cdot
  \left(
\begin{array}{ccc}
b  \\
1_S\\
b\\
\end{array}
\right)
\end{equation*}
is continuous, because $\mathscr{I}_\lambda^n(S)$ is a semitopological semigroup. This and  Proposition~\ref{proposition-5.1} imply that
\begin{equation*}
  S^{(a_1,\ldots,a_k)}_{(b_1,\ldots,b_k)}=\Big(S^{(a_1)}_{(b_1)}\Big)\Big(\mathfrak{f}^{(a_1)}_{(b_1)}\Big)^{-1}\cap\cdots\cap \Big(S^{(a_k)}_{(b_k)}\Big)\Big(\mathfrak{f}^{(a_k)}_{(b_k)}\Big)^{-1} \cap \mathscr{I}_\lambda^k(S)
\end{equation*}
a closed subspace of $\mathscr{I}_\lambda^n(S)$.
\end{proof}

Since a continuous image of a compact (an H-closed) space is compact (H-closed) (see \cite[Chapter~3]{Engelking-1989}), Proposition~\ref{proposition-5.3} and Lemma~\ref{lemma-5.5} imply the following corollary.

\begin{corollary}\label{corollary-5.6}
Let $S$ be a monoid, $\lambda$ be any non-zero cardinal, $n$ be an arbitrary positive integer $\leqslant\lambda$, $0<k\leqslant n$ and $\mathscr{I}_\lambda^n(S)$ be a Hausdorff semitopological semigroup. If the set $S^{(a)}_{(b)}$ is H-closed (compact) in $\mathscr{I}_\lambda^n(S)$ for some $a,b\in\lambda$ then $S^{(a_1,\ldots,a_k)}_{(b_1,\ldots,b_k)}$ is a closed subspace of $\mathscr{I}_\lambda^n(S)$ for any ordered collections of $k$ distinct elements $(a_1,\ldots,a_k)$ and $(b_1,\ldots,b_k)$ of $\lambda^k$.
\end{corollary}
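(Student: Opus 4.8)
The plan is to deduce the statement from Proposition~\ref{proposition-5.3} and Lemma~\ref{lemma-5.5} in two moves: first I would promote the single‑box hypothesis to the assertion that \emph{every} rank‑one box $S^{(c)}_{(d)}$ is closed in $\mathscr{I}_\lambda^n(S)$, and then I would feed this into Lemma~\ref{lemma-5.5}.

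For the first move, recall from Proposition~\ref{proposition-5.3} that for any $c,d\in\lambda$ the restriction of the map $^{(a)}_{(b)}\mathfrak{h}^{(c)}_{(d)}$ to $S^{(a)}_{(b)}$ is a homeomorphism onto $S^{(c)}_{(d)}$; in particular it is a continuous surjection. Since a continuous image of a compact (respectively, an H‑closed) space is compact (respectively, H‑closed), and $S^{(a)}_{(b)}$ is compact (H‑closed) by hypothesis, it follows that $S^{(c)}_{(d)}$ is compact (H‑closed) for every $c,d\in\lambda$. Now use that $\mathscr{I}_\lambda^n(S)$ is Hausdorff: a compact subspace of a Hausdorff space is closed by the standard separation argument, and an H‑closed subspace of a Hausdorff space is closed immediately from the definition of H‑closedness. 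Hence $S^{(c)}_{(d)}$ is a closed subset of $\mathscr{I}_\lambda^n(S)$ for all $c,d\in\lambda$.

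For the second move, the hypothesis of Lemma~\ref{lemma-5.5} — namely that $S^{(a)}_{(b)}$ is a closed subset of $\mathscr{I}_\lambda^n(S)$ for \emph{every} $a,b\in\lambda$ — has now been verified, so Lemma~\ref{lemma-5.5} applies and yields that $S^{(a_1,\ldots,a_k)}_{(b_1,\ldots,b_k)}$ is a closed subspace of $\mathscr{I}_\lambda^n(S)$ for any ordered collections of $k$ distinct elements $(a_1,\ldots,a_k)$ and $(b_1,\ldots,b_k)$ of $\lambda^k$.

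There is no genuine obstacle here, since this is a corollary; the only points needing care are (i) invoking Proposition~\ref{proposition-5.3} in the right direction, so that compactness/H‑closedness is transported from one box to all boxes of rank one, and (ii) observing that for a subspace of the Hausdorff space $\mathscr{I}_\lambda^n(S)$ both compactness and H‑closedness force closedness in $\mathscr{I}_\lambda^n(S)$, which is exactly what is required before Lemma~\ref{lemma-5.5} can be applied.
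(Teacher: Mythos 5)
Your proposal is correct and follows essentially the same route the paper indicates: transport compactness/H-closedness from the single box $S^{(a)}_{(b)}$ to every rank-one box via the homeomorphisms of Proposition~\ref{proposition-5.3}, observe that a compact (or H-closed) subspace of the Hausdorff space $\mathscr{I}_\lambda^n(S)$ is closed, and then apply Lemma~\ref{lemma-5.5}. No gaps.
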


\begin{definition}\label{definition-5.7}
Let $\mathfrak{S}$ be a class of semitopological semigroups.
Let $\lambda\geqslant 1$ be a cardinal,  $n$ be a positive integer $\leqslant \lambda$, and $(S,\tau)\in\mathfrak{S}$. Let $\tau_{\mathscr{I}}$ be a topology on $\mathscr{I}_\lambda^n(S)$ such that
\begin{itemize}
  \item[a)] $\left(\mathscr{I}_\lambda^n(S), \tau_{\mathscr{I}}\right)\in\mathfrak{S}$;
  \item[b)] the topological subspace $\left(S_{(a)}^{(a)},\tau_{B}|_{S_{\alpha,\alpha}}\right)$ is naturally homeomorphic to $(S,\tau)$ for some $a\in{\lambda}$, i.e., the map $\mathfrak{H}\colon S\to \mathscr{I}_\lambda^n(S)$,
      $s\mapsto
      \left(
\begin{array}{c}
a   \\
s \\
a \\
\end{array}
\right)
      $ is a topological embedding.
\end{itemize}
Then $\left(\mathscr{I}_\lambda^n(S), \tau_{\mathscr{I}}\right)$ is called a {\it topological $\mathscr{I}_\lambda^n$-extension of $(S, \tau)$ in $\mathfrak{S}$}.
\end{definition}

\begin{lemma}\label{lemma-5.8}
Let $(S,\tau)$ be a semitopological monoid, $\lambda$ be any non-zero cardinal, $n$ be an arbitrary positive integer $\leqslant\lambda$, $0<k\leqslant n$ and $\left(\mathscr{I}_\lambda^n(S), \tau_{\mathscr{I}}\right)$ be a topological $\mathscr{I}_\lambda^n$-extension of $(S, \tau)$ in the class of semitopological semigroups. Let $U_1(s_1),\ldots,U_k(s_k)$ be open neighbourhoods of the points $s_1,\ldots,s_k$ in $(S,\tau)$, respectively. Then the following sets
\begin{equation*}
  {\Uparrow}\left[U_1(s_1)\right]^{(a_1)}_{(b_1)}=\Big(\left[U_1(s_1)\right]^{(a_1)}_{(b_1)}\Big)\Big(\mathfrak{f}^{(a_1)}_{(b_1)}\Big)^{-1}, \qquad \ldots \quad, \qquad {\Uparrow}\left[U_k(s_k)\right]^{(a_k)}_{(b_k)}=\Big(\left[U_k(s_k)\right]^{(a_k)}_{(b_k)}\Big)\Big(\mathfrak{f}^{(a_k)}_{(b_k)}\Big)^{-1},
\end{equation*}
and
\begin{equation*}
  {\Uparrow}\left[U_1(s_1),\ldots,U_k(s_k)\right]^{(a_1,\ldots,a_k)}_{(b_1,\ldots,b_k)}={\Uparrow}\left[U_1(s_1)\right]^{(a_1)}_{(b_1)}
  \cap\ldots\cap{\Uparrow}\left[U_k(s_k)\right]^{(a_k)}_{(b_k)},
\end{equation*}
are open neighbourhoods of the points
\begin{equation*}
\left(
\begin{array}{c}
a_1 \\
s_1 \\
b_1 \\
\end{array}
\right), \cdots,
\left(
\begin{array}{c}
a_k \\
s_k \\
b_k \\
\end{array}
\right),
\qquad \hbox{and} \qquad
  \left(
\begin{array}{ccc}
a_1 & \ldots & a_k \\
s_1 & \ldots & s_k \\
b_1 & \ldots & b_k \\
\end{array}
\right)
\end{equation*}
in $\left(\mathscr{I}_\lambda^n(S), \tau_{\mathscr{I}}\right)$, respectively, for any ordered collections of $k$ distinct elements $(a_1,\ldots,a_k)$ and $(b_1,\ldots,b_k)$ of $\lambda^k$.
\end{lemma}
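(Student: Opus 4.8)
The plan is to build each of the displayed sets as a finite intersection of preimages of open sets under the separately continuous translation maps $\mathfrak{f}^{(a_j)}_{(b_j)}$, and then observe that each point in question lies in its alleged neighbourhood. First I would treat the single-index sets ${\Uparrow}\left[U_j(s_j)\right]^{(a_j)}_{(b_j)}$. The map $\mathfrak{f}^{(a_j)}_{(b_j)}\colon\mathscr{I}_\lambda^n(S)\to\mathscr{I}_\lambda^n(S)$ is the composition of a left translation by a fixed element and a right translation by a fixed element, so it is continuous on the semitopological semigroup $\left(\mathscr{I}_\lambda^n(S),\tau_{\mathscr{I}}\right)$. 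By condition (b) of Definition~\ref{definition-5.7} together with Proposition~\ref{proposition-5.3}, the subsemigroup $S^{(a_j)}_{(b_j)}$ is homeomorphic to $(S,\tau)$ via the natural map sending $s$ to $\left(\begin{smallmatrix}a_j\\ s\\ b_j\end{smallmatrix}\right)$; hence $\left[U_j(s_j)\right]^{(a_j)}_{(b_j)}$ is a relatively open subset of $S^{(a_j)}_{(b_j)}$. I would then pick a genuinely open set $W_j$ in $\mathscr{I}_\lambda^n(S)$ with $W_j\cap S^{(a_j)}_{(b_j)}=\left[U_j(s_j)\right]^{(a_j)}_{(b_j)}$; since $\mathfrak{f}^{(a_j)}_{(b_j)}$ maps into $S^{(a_j)}_{(b_j)}$, its preimage of $W_j$ equals its preimage of $\left[U_j(s_j)\right]^{(a_j)}_{(b_j)}$, so ${\Uparrow}\left[U_j(s_j)\right]^{(a_j)}_{(b_j)}=\left(\mathfrak{f}^{(a_j)}_{(b_j)}\right)^{-1}(W_j)$ is open.

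Next I would check membership. Because $1_S$ is the identity of $S$ and $\mathfrak{f}^{(a_j)}_{(b_j)}$ is built from the idempotents $\left(\begin{smallmatrix}a_j\\ 1_S\\ a_j\end{smallmatrix}\right)$ and $\left(\begin{smallmatrix}b_j\\ 1_S\\ b_j\end{smallmatrix}\right)$, a direct computation using the semigroup operation of $\mathscr{I}_\lambda^n(S)$ gives $\left(\begin{smallmatrix}a_j\\ s_j\\ b_j\end{smallmatrix}\right)\mathfrak{f}^{(a_j)}_{(b_j)}=\left(\begin{smallmatrix}a_j\\ s_j\\ b_j\end{smallmatrix}\right)$, which lies in $\left[U_j(s_j)\right]^{(a_j)}_{(b_j)}$ since $s_j\in U_j(s_j)$. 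Thus $\left(\begin{smallmatrix}a_j\\ s_j\\ b_j\end{smallmatrix}\right)\in{\Uparrow}\left[U_j(s_j)\right]^{(a_j)}_{(b_j)}$, establishing that these are open neighbourhoods of the claimed points. For the last set, ${\Uparrow}\left[U_1(s_1),\ldots,U_k(s_k)\right]^{(a_1,\ldots,a_k)}_{(b_1,\ldots,b_k)}$ is a finite intersection of open sets, hence open; and I must verify the point $\left(\begin{smallmatrix}a_1&\ldots&a_k\\ s_1&\ldots&s_k\\ b_1&\ldots&b_k\end{smallmatrix}\right)$ lies in each ${\Uparrow}\left[U_j(s_j)\right]^{(a_j)}_{(b_j)}$. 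Here the relevant computation is $\left(\begin{smallmatrix}a_1&\ldots&a_k\\ s_1&\ldots&s_k\\ b_1&\ldots&b_k\end{smallmatrix}\right)\mathfrak{f}^{(a_j)}_{(b_j)}=\left(\begin{smallmatrix}a_j\\ s_j\\ b_j\end{smallmatrix}\right)$, since multiplying on the left by $\left(\begin{smallmatrix}a_j\\ 1_S\\ a_j\end{smallmatrix}\right)$ and on the right by $\left(\begin{smallmatrix}b_j\\ 1_S\\ b_j\end{smallmatrix}\right)$ restricts the partial bijection to the single pair $(a_j,b_j)$ and multiplies the corresponding entry by $1_S$ on both sides; the result lies in $\left[U_j(s_j)\right]^{(a_j)}_{(b_j)}$ because $s_j\in U_j(s_j)$.

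The main obstacle is purely bookkeeping rather than conceptual: I must make sure the passage from the relatively open set $\left[U_j(s_j)\right]^{(a_j)}_{(b_j)}$ inside $S^{(a_j)}_{(b_j)}$ to a genuinely open set $W_j$ in $\mathscr{I}_\lambda^n(S)$ is legitimate, and that taking $\left(\mathfrak{f}^{(a_j)}_{(b_j)}\right)^{-1}$ of $W_j$ versus of $\left[U_j(s_j)\right]^{(a_j)}_{(b_j)}$ gives the same set. This works precisely because the image of $\mathfrak{f}^{(a_j)}_{(b_j)}$ is contained in $S^{(a_j)}_{(b_j)}$, so no point outside that subspace can be added or lost when enlarging $\left[U_j(s_j)\right]^{(a_j)}_{(b_j)}$ to $W_j$; I would spell this observation out carefully, since it is the one place where the argument could silently go wrong. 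Everything else reduces to the two matrix-unit identities displayed above, which follow immediately from Construction~\ref{construction-1.1} and the fact that $S$ is a monoid.
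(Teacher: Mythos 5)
Your overall strategy is the same as the paper's: realize each $\left[U_j(s_j)\right]^{(a_j)}_{(b_j)}$ as the trace on $S^{(a_j)}_{(b_j)}$ of an open set $W_j$ of $\mathscr{I}_\lambda^n(S)$ (using Definition~\ref{definition-5.7}(b) together with Proposition~\ref{proposition-5.3}), pull back along the continuous two\--sided translations $\mathfrak{f}^{(a_j)}_{(b_j)}$, and check membership by the matrix\--unit computation $\alpha_S\,\mathfrak{f}^{(a_j)}_{(b_j)}=\left(\begin{smallmatrix}a_j\\ s_j\\ b_j\end{smallmatrix}\right)$. The membership computations are correct.

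However, the one step you single out as the crux is stated incorrectly: it is \emph{not} true that $\mathfrak{f}^{(a_j)}_{(b_j)}$ maps $\mathscr{I}_\lambda^n(S)$ into $S^{(a_j)}_{(b_j)}$. Whenever the pair $(a_j,b_j)$ does not belong to the graph of the $\mathscr{I}_\lambda^n$-restriction of $\alpha$, one has $\alpha\,\mathfrak{f}^{(a_j)}_{(b_j)}=0$, so the image of $\mathfrak{f}^{(a_j)}_{(b_j)}$ is $S^{(a_j)}_{(b_j)}\cup\{0\}$. Consequently, if $0\in W_j$ then $\big(\mathfrak{f}^{(a_j)}_{(b_j)}\big)^{-1}(W_j)$ contains the entire (large) set $\big(\mathfrak{f}^{(a_j)}_{(b_j)}\big)^{-1}(0)=\mathscr{I}_\lambda^n(S)\setminus{\Uparrow}S^{(a_j)}_{(b_j)}$ and is strictly larger than ${\Uparrow}\left[U_j(s_j)\right]^{(a_j)}_{(b_j)}$, so your identification of the two preimages fails. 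The gap is easily repaired: either note that ${\Uparrow}S^{(a_j)}_{(b_j)}$ is open by Corollary~\ref{corollary-5.4} and write ${\Uparrow}\left[U_j(s_j)\right]^{(a_j)}_{(b_j)}=\big(\mathfrak{f}^{(a_j)}_{(b_j)}\big)^{-1}(W_j)\cap{\Uparrow}S^{(a_j)}_{(b_j)}$, an intersection of two open sets; or replace $W_j$ by a union, over the points of $\left[U_j(s_j)\right]^{(a_j)}_{(b_j)}$, of neighbourhoods that avoid $0$ (possible by Proposition~\ref{proposition-5.1}, since the topology is Hausdorff) and still trace inside $\left[U_j(s_j)\right]^{(a_j)}_{(b_j)}$. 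With either repair your argument coincides with the paper's proof.
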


\begin{proof}
Since $\left(\mathscr{I}_\lambda^n(S), \tau_{\mathscr{I}}\right)$ be a topological $\mathscr{I}_\lambda^n$-extension of $(S, \tau)$ in the class of Hausdorff semitopological semigroups, there exist open neighbourhoods $W_1,\ldots,W_k$ of of the points
$
\left(
\begin{array}{c}
a_1 \\
s_1 \\
b_1 \\
\end{array}
\right), \cdots,
\left(
\begin{array}{c}
a_k \\
s_k \\
b_k \\
\end{array}
\right)
$
in $\left(\mathscr{I}_\lambda^n(S), \tau_{\mathscr{I}}\right)$, respectively, such that
\begin{equation*}
W_1\cap S^{(a_1)}_{(b_1)}=\left[U_1(s_1)\right]^{(a_1)}_{(b_1)}, \qquad \ldots\;, \qquad W_k\cap S^{(a_k)}_{(b_k)}=\left[U_k(s_k)\right]^{(a_k)}_{(b_k)}.
\end{equation*}
Then the requested statement of the lemma follows from the separate continuity of the semigroup operation in $\left(\mathscr{I}_\lambda^n(S), \tau_{\mathscr{I}}\right)$.
\end{proof}

\begin{theorem}\label{theorem-5.9}
Let $(S,\tau)$ be a Hausdorff compact semitopological monoid, $\lambda$ be any non-zero cardinal, $n$ be an arbitrary positive integer $\leqslant\lambda$, $0<k\leqslant n$ and $\left(\mathscr{I}_\lambda^n(S), \tau_{\mathscr{I}}\right)$ be a compact topological $\mathscr{I}_\lambda^n$-extension of $(S, \tau)$ in the class of Hausdorff semitopological semigroups. Then the subspace $S^{(a_1,\ldots,a_k)}_{(b_1,\ldots,b_k)}$ of $\left(\mathscr{I}_\lambda^n(S), \tau_{\mathscr{I}}\right)$ is compact and moreover it is homeomorphic to the power $S^k$ with the product topology by the mapping
\begin{equation*}
  \mathfrak{H}\colon S^{(a_1,\ldots,a_k)}_{(b_1,\ldots,b_k)}\to S^k, \;
  \left(
\begin{array}{ccc}
a_1 & \ldots & a_k \\
s_1 & \ldots & s_k \\
b_1 & \ldots & b_k \\
\end{array}
\right)\mapsto (s_1,\ldots,s_k),
\end{equation*}
for any ordered collections of $k$ distinct elements $(a_1,\ldots,a_k)$ and $(b_1,\ldots,b_k)$ of $\lambda^k$.
\end{theorem}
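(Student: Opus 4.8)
The plan is to derive the statement from two facts: that the subspace $S^{(a_1,\ldots,a_k)}_{(b_1,\ldots,b_k)}$ of $\left(\mathscr{I}_\lambda^n(S),\tau_{\mathscr{I}}\right)$ is compact, and that the obviously bijective map $\mathfrak{H}$ is continuous; then, since a continuous bijection of a compact space onto a Hausdorff space is a homeomorphism, the theorem will follow.

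For the compactness I would first observe that, by condition~(b) of Definition~\ref{definition-5.7}, there is $a\in\lambda$ such that the subsemigroup $S^{(a)}_{(a)}$ of $\left(\mathscr{I}_\lambda^n(S),\tau_{\mathscr{I}}\right)$ is naturally homeomorphic to $(S,\tau)$; in particular $S^{(a)}_{(a)}$ is a compact subset of $\left(\mathscr{I}_\lambda^n(S),\tau_{\mathscr{I}}\right)$. Then Corollary~\ref{corollary-5.6}, applied with this $a$ and with $b=a$, implies that $S^{(a_1,\ldots,a_k)}_{(b_1,\ldots,b_k)}$ is a closed subspace of $\left(\mathscr{I}_\lambda^n(S),\tau_{\mathscr{I}}\right)$; since $\left(\mathscr{I}_\lambda^n(S),\tau_{\mathscr{I}}\right)$ is compact by hypothesis, $S^{(a_1,\ldots,a_k)}_{(b_1,\ldots,b_k)}$ is compact. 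Also $S^k$ with the product topology is Hausdorff (indeed compact) because $(S,\tau)$ is Hausdorff (and compact).

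Next I would verify the continuity of $\mathfrak{H}$; its inverse is the map $\mathfrak{f}^{(a_1,\ldots,a_k)}_{(b_1,\ldots,b_k)}$ of Construction~\ref{construction-4.8}, so $\mathfrak{H}$ is a bijection. Fix $\beta_S\in S^{(a_1,\ldots,a_k)}_{(b_1,\ldots,b_k)}$ with $(a_j,b_j)\beta_S=s_j$ for $j=1,\ldots,k$, and a basic open neighbourhood $U_1(s_1)\times\cdots\times U_k(s_k)$ of $(\beta_S)\mathfrak{H}=(s_1,\ldots,s_k)$ in $S^k$, where each $U_j(s_j)$ is open in $(S,\tau)$. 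By Lemma~\ref{lemma-5.8} the set $V={\Uparrow}\left[U_1(s_1),\ldots,U_k(s_k)\right]^{(a_1,\ldots,a_k)}_{(b_1,\ldots,b_k)}$ is an open neighbourhood of $\beta_S$ in $\left(\mathscr{I}_\lambda^n(S),\tau_{\mathscr{I}}\right)$, hence $V\cap S^{(a_1,\ldots,a_k)}_{(b_1,\ldots,b_k)}$ is an open neighbourhood of $\beta_S$ in the subspace. Using the multiplication of $\mathscr{I}_\lambda^n(S)$ one checks that for any $\alpha_S\in S^{(a_1,\ldots,a_k)}_{(b_1,\ldots,b_k)}$ with $(a_j,b_j)\alpha_S=t_j$ the element $(\alpha_S)\mathfrak{f}^{(a_j)}_{(b_j)}$ is precisely the rank-one element of $\mathscr{I}_\lambda^n(S)$ sending $(a_j,b_j)$ to $t_j$, which belongs to $\left[U_j(s_j)\right]^{(a_j)}_{(b_j)}$ if and only if $t_j\in U_j(s_j)$; intersecting over $j=1,\ldots,k$ yields
\begin{equation*}
V\cap S^{(a_1,\ldots,a_k)}_{(b_1,\ldots,b_k)}=\left[U_1(s_1),\ldots,U_k(s_k)\right]^{(a_1,\ldots,a_k)}_{(b_1,\ldots,b_k)},
\end{equation*}
and therefore $\left(V\cap S^{(a_1,\ldots,a_k)}_{(b_1,\ldots,b_k)}\right)\mathfrak{H}=U_1(s_1)\times\cdots\times U_k(s_k)$. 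Thus $\mathfrak{H}$ is continuous at the arbitrary point $\beta_S$, so $\mathfrak{H}$ is continuous.

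Putting the pieces together, $\mathfrak{H}\colon S^{(a_1,\ldots,a_k)}_{(b_1,\ldots,b_k)}\to S^k$ is a continuous bijection of a compact space onto a Hausdorff space, hence a homeomorphism, which together with the compactness established above gives the theorem. I expect the only genuinely technical step to be the neighbourhood identification of the previous paragraph, namely that ${\Uparrow}\left[U_1(s_1),\ldots,U_k(s_k)\right]^{(a_1,\ldots,a_k)}_{(b_1,\ldots,b_k)}$ intersects $S^{(a_1,\ldots,a_k)}_{(b_1,\ldots,b_k)}$ exactly in the box $\left[U_1(s_1),\ldots,U_k(s_k)\right]^{(a_1,\ldots,a_k)}_{(b_1,\ldots,b_k)}$; this is a routine but slightly fiddly verification with the operation of $\mathscr{I}_\lambda^n(S)$, everything else being an assembly of Corollary~\ref{corollary-5.6}, Lemma~\ref{lemma-5.8}, and the standard fact about continuous bijections out of compact spaces.
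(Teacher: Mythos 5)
Your proposal is correct and follows essentially the same route as the paper: compactness of $S^{(a_1,\ldots,a_k)}_{(b_1,\ldots,b_k)}$ via Corollary~\ref{corollary-5.6} together with closedness in the compact ambient space, continuity of $\mathfrak{H}$ via Lemma~\ref{lemma-5.8}, and the standard fact that a continuous bijection from a compact space onto a Hausdorff space is a homeomorphism. Your version merely makes explicit two details the paper leaves implicit, namely that Definition~\ref{definition-5.7}(b) supplies the compact copy $S^{(a)}_{(a)}$ needed to invoke Corollary~\ref{corollary-5.6}, and the verification that ${\Uparrow}\left[U_1(s_1),\ldots,U_k(s_k)\right]^{(a_1,\ldots,a_k)}_{(b_1,\ldots,b_k)}$ meets $S^{(a_1,\ldots,a_k)}_{(b_1,\ldots,b_k)}$ exactly in the box.
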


\begin{proof}
Since the monoid $(S,\tau)$ is compact Corollary~\ref{corollary-5.6} implies that $S^{(a_1,\ldots,a_k)}_{(b_1,\ldots,b_k)}$ a closed subset of of $\left(\mathscr{I}_\lambda^n(S), \tau_{\mathscr{I}}\right)$. Then compactness of of $\left(\mathscr{I}_\lambda^n(S), \tau_{\mathscr{I}}\right)$ implies that $S^{(a_1,\ldots,a_k)}_{(b_1,\ldots,b_k)}$ is compact, as well.

It is obvious that the above defined map $\mathfrak{H}\colon S^{(a_1,\ldots,a_k)}_{(b_1,\ldots,b_k)}\to S^k$ is a bijection. Also, Lemma~\ref{lemma-5.8} implies that the map $\mathfrak{H}$ is continuous, and it is a homeomorphism, because $S^k$ and $S^{(a_1,\ldots,a_k)}_{(b_1,\ldots,b_k)}$ are compacta.
\end{proof}

Proposition~\ref{proposition-5.1} and Theorem~\ref{theorem-5.9} imply the following corollary.

\begin{corollary}\label{corollary-5.10}
Let $(S,\tau)$ be a Hausdorff compact semitopological monoid, $\lambda$ be any non-zero cardinal, $n$ be an arbitrary positive integer $\leqslant\lambda$, $0<k\leqslant n$ and $\left(\mathscr{I}_\lambda^n(S), \tau_{\mathscr{I}}\right)$ be a compact topological $\mathscr{I}_\lambda^n$-extension of $(S, \tau)$ in the class of Hausdorff semitopological semigroups. Then $S^{(a_1,\ldots,a_k)}_{(b_1,\ldots,b_k)}$ is an open-and-closed subset of $\left(\mathscr{I}_\lambda^n(S), \tau_{\mathscr{I}}\right)$ for any ordered collections of $k$ distinct elements $(a_1,\ldots,a_k)$ and $(b_1,\ldots,b_k)$ of $\lambda^k$, and the space $\left(\mathscr{I}_\lambda^n(S), \tau_{\mathscr{I}}\right)$ is the topological sum of such sets with isolated zero.
\end{corollary}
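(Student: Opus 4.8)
The plan is to read off the topology of $\left(\mathscr{I}_\lambda^n(S),\tau_{\mathscr{I}}\right)$ from the local picture in Proposition~\ref{proposition-5.1} together with the compactness supplied by Theorem~\ref{theorem-5.9}. The sets $S^{(a_1,\ldots,a_k)}_{(b_1,\ldots,b_k)}$, taken over all $k\in\{1,\ldots,n\}$ and all ordered collections of $k$ distinct elements of $\lambda$, together with $\{0\}$, form a partition of $\mathscr{I}_\lambda^n(S)$ (any two of these sets are either equal or disjoint, since the graph of a non-zero $\alpha_S$ determines its frame up to the simultaneous-permutation identification noted after Proposition~\ref{proposition-2.4}). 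Hence it suffices to prove that every block of this partition is open-and-closed; the ``topological sum with isolated zero'' assertion is then automatic, because a partition of a compact space into non-empty open-and-closed blocks is necessarily finite (the blocks form an open cover, so a finite subcover must already exhaust them), whence the complement of each block is a finite union of the remaining closed blocks, and in particular $\{0\}$ is clopen.

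Closedness is the easy half: by Theorem~\ref{theorem-5.9} the map $\mathfrak{H}$ identifies $S^{(a_1,\ldots,a_k)}_{(b_1,\ldots,b_k)}$ homeomorphically with the compact power $S^{k}$, so each block is a compact, hence closed, subset of the Hausdorff space $\left(\mathscr{I}_\lambda^n(S),\tau_{\mathscr{I}}\right)$, and $\{0\}$ is closed as well. For openness I would argue by downward induction on the rank $k$. When $k=n$ the neighbourhood $U(\alpha_S)$ furnished by Proposition~\ref{proposition-5.1} already satisfies $U(\alpha_S)=U(\alpha_S)\cap\mathscr{I}_\lambda^n(S)\subseteq S^{(a_1,\ldots,a_n)}_{(b_1,\ldots,b_n)}$, because $\mathscr{I}_\lambda^n(S)$ is the whole space; so every maximal-rank block is open. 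Assuming inductively that every block of rank $>k$ is clopen, the set of elements of rank $>k$ is open, so $\mathscr{I}_\lambda^{k}(S)$ — closed by Proposition~\ref{proposition-5.1} — is in fact clopen in $\mathscr{I}_\lambda^{n}(S)$; being a closed subspace it is compact Hausdorff, and since it still contains each $S^{(a)}_{(a)}$ as a subspace homeomorphic to $(S,\tau)$, it is itself a compact topological $\mathscr{I}_\lambda^{k}$-extension of $(S,\tau)$. Applying Proposition~\ref{proposition-5.1} to a point $\alpha_S\in S^{(a_1,\ldots,a_k)}_{(b_1,\ldots,b_k)}$ and intersecting its neighbourhood with the clopen set $\mathscr{I}_\lambda^{k}(S)$ shows that $S^{(a_1,\ldots,a_k)}_{(b_1,\ldots,b_k)}$ is open in $\mathscr{I}_\lambda^{k}(S)$, hence open in $\mathscr{I}_\lambda^{n}(S)$; this closes the induction, and the first paragraph then delivers the corollary.

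The step I expect to resist is precisely the claim, used in the inductive passage, that the union of the higher-rank clopen blocks is closed — equivalently, that $\mathscr{I}_\lambda^{k}(S)$ is \emph{open}, i.e.\ that no element of rank $k$ is a limit of elements of strictly larger rank. This is harmless for finite $\lambda$ (finitely many blocks, hence a finite union of closed sets), but for infinite $\lambda$ it must be extracted from separate continuity: a cluster point of an infinite family of higher-rank elements lying in pairwise disjoint clopen blocks cannot lie in any one of those blocks, and continuity of the left and right translations by the idempotents $\varepsilon_{1}[1_S]$ and $\varepsilon_{2}[1_S]$ (as in Remark~\ref{remark-5.2} and Corollary~\ref{corollary-5.4}) forbids such a point from having smaller rank. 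The upshot is that a compact topological $\mathscr{I}_\lambda^n$-extension can involve only finitely many non-empty blocks, so the hypothesis of the corollary already forces the partition to be finite; and once that is known, the ``finite partition into compact (hence closed) sets'' observation of the first two paragraphs simultaneously yields that every block is open-and-closed and that $\mathscr{I}_\lambda^n(S)$ is their topological sum with $0$ isolated.
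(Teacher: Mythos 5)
Your closedness argument and the base case $k=n$ of the induction are fine: Theorem~\ref{theorem-5.9} makes each block compact, hence closed, and for $k=n$ the neighbourhood supplied by Proposition~\ref{proposition-5.1} lies entirely inside the block. The proof breaks at exactly the step you flagged, and the repair you offer there is not valid. If $\alpha_S$ has rank $k<n$ and $(\beta_\iota)$ is a net of higher-rank elements whose graphs all extend the graph of $\alpha_S$, then $\varepsilon_{1}[1_S]\cdot\beta_\iota\cdot\varepsilon_{2}[1_S]=\alpha_S$ for every $\iota$ (the two idempotents just restrict $\beta_\iota$ back to the frame of $\alpha_S$), so the translated net is constant at $\alpha_S$ and converges to $\alpha_S$: separate continuity of these translations is perfectly compatible with $\beta_\iota\to\alpha_S$ and yields no contradiction. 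Thus nothing in your argument rules out a rank-$k$ element being a cluster point of elements of strictly larger rank, which is precisely what is needed to make $\mathscr{I}_\lambda^{k}(S)$ open.

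Worse, your ``upshot'' --- that compactness forces only finitely many non-empty blocks --- is refuted by the paper itself. For infinite $\lambda$ the topology $\tau_{\mathscr{I}}^{\mathbf{c}}$ of Example~\ref{example-5.12} is, by Theorem~\ref{theorem-5.14}, a compact Hausdorff topological $\mathscr{I}_\lambda^n$-extension of $(S,\tau)$ in the class of semitopological semigroups in which all (infinitely many) blocks are non-empty; there every basic neighbourhood of a rank-one element, namely ${\Uparrow}\left[U(s)\right]^{(a)}_{(b)}$ with finitely many sets of the form ${\Uparrow}S^{(c_1,\ldots,c_l)}_{(d_1,\ldots,d_l)}$ removed, still contains infinitely many rank-two elements when $n\geqslant 2$, so $S^{(a)}_{(b)}$ is not open, and the zero is not isolated even when $n=1$. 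Hence no argument can close this gap in the stated generality: the conclusion is tenable only when the partition into blocks is finite, i.e.\ when $\lambda$ is finite (or in the setting of Remark~\ref{remark-5.11}, where working in the class of Hausdorff \emph{topological} semigroups forces $\lambda$ to be finite because an infinite $\mathscr{I}_\lambda^n$ does not embed into a compact topological semigroup). In that finite case your first two paragraphs already finish the proof: finitely many compact blocks together with $\{0\}$ partition the space, so each has closed complement and all are clopen, giving the topological sum with isolated zero.
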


\begin{remark}\label{remark-5.11}
Since by Theorem~ of \cite{Gutik-Pavlyk-2005} an infinite semigroup of matrix units and hence an infinite semigroup $\mathscr{I}_\lambda^n$ do not embed into  compact Hausdorff topological semigroups, Corollary~\ref{corollary-5.10} describes   compact topological $\mathscr{I}_\lambda^n$-extensions of compact semigroups $(S, \tau)$ in the class of Hausdorff topological semigroups.
\end{remark}

\begin{example}\label{example-5.12}
Let $(S,\tau_S)$ be a compact Hausdorff semitopological monoid. On the semigroup $\mathscr{I}_\lambda^n(S)$ we define a topology $\tau_{\mathscr{I}}^\mathbf{c}$ in the following way. Put
  \begin{equation*}
  \mathscr{P}_k^\mathbf{c}(0)=\left\{\mathscr{I}_\lambda^n(S)\setminus {\Uparrow}S^{(a_1,\ldots,a_k)}_{(b_1,\ldots,b_k)}\colon (a_1,\ldots,a_k) \hbox{~and~} (b_1,\ldots,b_k) \hbox{~are ordered collections of $k$ distinct elements of $\lambda^k$}\right\},
  \end{equation*}
  for any $k=1,\ldots,n$, and
  \begin{equation*}
  \mathscr{P}^\mathbf{c}(a,s,b)=\left\{{\Uparrow}\left[U(s)\right]^{(a)}_{(b)}\colon U(s) \hbox{~is an open neighbourhood of~} s \hbox{~in~} (S,\tau_S) \right\}, \quad \hbox{for some~}
    \left(
    \begin{array}{c}
      a \\
      s \\
      b \\
    \end{array}
  \right)\in \mathscr{I}_\lambda^n(S)\setminus\{0\}.
  \end{equation*}
The topology $\tau_{\mathscr{I}}^\mathbf{c}$ on $\mathscr{I}_\lambda^n(S)$ is generated by the family
\begin{equation*}
  \mathscr{P}^\mathbf{c}=\left\{\mathscr{P}_k^\mathbf{c}(0)\colon k=1,\ldots,n\right\}\cup \left\{\mathscr{P}^\mathbf{c}(a,s,b)\colon
  \left(
    \begin{array}{c}
      a \\
      s \\
      b \\
    \end{array}
  \right)\in \mathscr{I}_\lambda^n(S)\setminus\{0\}
  \right\},
  \end{equation*}
as a subbase.
\end{example}

\begin{remark}\label{remark-5.13}
Lemma~\ref{lemma-5.8} and the definition of the topology $\tau_{\mathscr{I}}^\mathbf{c}$ on $\mathscr{I}_\lambda^n(S)$ implies that the following statements hold.
\begin{itemize}
  \item[(1)] For any $k=1,\ldots,n$  and every ordered collection $(a_1,\ldots,a_k)$ and $(b_1,\ldots,b_k)$ of $k$ distinct elements of $\lambda^k$ the set ${\Uparrow}S^{(a_1,\ldots,a_k)}_{(b_1,\ldots,b_k)}$ is closed in $\left(\mathscr{I}_\lambda^n(S),\tau_{\mathscr{I}}^\mathbf{c}\right)$.
  \item[(2)] For any element
  $\alpha_S=
  \left(
\begin{array}{ccc}
a_1 & \ldots & a_k \\
s_1 & \ldots & s_k \\
b_1 & \ldots & b_k \\
\end{array}
\right)
  $ of $\mathscr{I}_\lambda^n(S)$ and any open neighbourhoods $U_1(s_1),\ldots,U_k(s_k)$ of the points $s_1,\ldots,s_k$ in $(S,\tau)$ the set ${\Uparrow}\left[U_1(s_1),\ldots,U_k(s_k)\right]^{(a_1,\ldots,a_k)}_{(b_1,\ldots,b_k)}\setminus \left({\Uparrow}S^{(a^1_1,\ldots,a^1_{l_1})}_{(b^1_1,\ldots,b^1_{l_1})}\cup\cdots\cup {\Uparrow}S^{(a^p_1,\ldots,a^p_{l_p})}_{(b^p_1,\ldots,b^p_{l_p})}\right)$ such that $\alpha_S\notin {\Uparrow}S^{(a^1_1,\ldots,a^1_{l_1})}_{(b^1_1,\ldots,b^1_{l_1})}\cup\cdots\cup {\Uparrow}S^{(a^p_1,\ldots,a^p_{l_p})}_{(b^p_1,\ldots,b^p_{l_p})}$, is an open neighbourhood of the point  $\alpha_S$ in $\left(\mathscr{I}_\lambda^n(S),\tau_{\mathscr{I}}^\mathbf{c}\right)$. Here we have that $\{a_1,\ldots,a_k\}\subsetneqq \left\{a^j_1,\ldots,a^j_{l_j}\right\}$ and $\{b_1,\ldots,b_k\}\subsetneqq \left\{b^j_1,\ldots,b^j_{l_j}\right\}$ for all $j=1,\ldots,p$.
\end{itemize}
\end{remark}

\begin{theorem}\label{theorem-5.14}
If $(S,\tau_S)$ is a compact Hausdorff semitopological monoid then $\left(\mathscr{I}_\lambda^n(S),\tau_{\mathscr{I}}^\mathbf{c}\right)$ is a compact Hausdorff semitopological semigroup.
\end{theorem}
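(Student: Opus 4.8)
\emph{Overview.} The plan is to verify, in order, (i) that $\tau_{\mathscr{I}}^{\mathbf{c}}$ is Hausdorff, (ii) that the multiplication on $\mathscr{I}_\lambda^n(S)$ is separately continuous in $\tau_{\mathscr{I}}^{\mathbf{c}}$, and (iii) that $\left(\mathscr{I}_\lambda^n(S),\tau_{\mathscr{I}}^{\mathbf{c}}\right)$ is compact. Throughout I shall use the description of subbasic neighbourhoods from Example~\ref{example-5.12}; the equalities of Lemma~\ref{lemma-5.8}; the fact, from Corollary~\ref{corollary-5.4} and Remark~\ref{remark-5.13}, that every set ${\Uparrow}S^{(a_1,\ldots,a_k)}_{(b_1,\ldots,b_k)}$ and every set ${\Uparrow}\left[U_1(s_1),\ldots,U_k(s_k)\right]^{(a_1,\ldots,a_k)}_{(b_1,\ldots,b_k)}$ is $\tau_{\mathscr{I}}^{\mathbf{c}}$-open; and the identity ${\Uparrow}S^{(a_1,\ldots,a_k)}_{(b_1,\ldots,b_k)}=\left\{\alpha_S\in\mathscr{I}_\lambda^n(S)\colon\{a_1,\ldots,a_k\}\subseteq\mathop{\textsf{\textbf{d}}}(\alpha_S)\text{ and }a_j\alpha=b_j\text{ for }j=1,\ldots,k\right\}$, which is immediate from the definition of $\mathfrak{f}^{(a_1,\ldots,a_k)}_{(b_1,\ldots,b_k)}$. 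I also record that, as in Proposition~\ref{proposition-5.1}, each block $S^{(a_1,\ldots,a_k)}_{(b_1,\ldots,b_k)}$ is $\tau_{\mathscr{I}}^{\mathbf{c}}$-open and, as in Theorem~\ref{theorem-5.9}, carries the product topology of $S^k$.

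\emph{Hausdorffness.} Let $\alpha_S\neq\beta_S$ in $\mathscr{I}_\lambda^n(S)$. If the underlying partial bijections disagree at some point — in particular if exactly one of $\alpha_S,\beta_S$ is $0$ — then there is a pair $(a,b)$ with $a\in\mathop{\textsf{\textbf{d}}}(\alpha_S)$, $a\alpha=b$, while $a\notin\mathop{\textsf{\textbf{d}}}(\beta_S)$ or $a\beta\neq b$; then ${\Uparrow}S^{(a)}_{(b)}$ and $\mathscr{I}_\lambda^n(S)\setminus{\Uparrow}S^{(a)}_{(b)}$ are disjoint $\tau_{\mathscr{I}}^{\mathbf{c}}$-open sets separating them. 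Otherwise $\alpha_S$ and $\beta_S$ lie in a common block $S^{(a_1,\ldots,a_k)}_{(b_1,\ldots,b_k)}$ and differ in some coordinate, say $(a_j,b_j)\alpha_S=s\neq t=(a_j,b_j)\beta_S$; taking disjoint open neighbourhoods $U(s),V(t)$ of $s,t$ in the Hausdorff space $(S,\tau_S)$, the sets ${\Uparrow}\left[U(s)\right]^{(a_j)}_{(b_j)}$ and ${\Uparrow}\left[V(t)\right]^{(a_j)}_{(b_j)}$ separate $\alpha_S$ and $\beta_S$.

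\emph{Separate continuity.} By left–right symmetry it suffices to show that right translation $\rho_{\gamma_S}\colon\xi_S\mapsto\xi_S\gamma_S$ is continuous for each fixed $\gamma_S$. Restricted to a block $S^{(a_1,\ldots,a_k)}_{(b_1,\ldots,b_k)}$, the rule of Construction~\ref{construction-1.1} shows $\rho_{\gamma_S}$ is either identically $0$ (when $\mathop{\textsf{\textbf{r}}}$ of the block misses $\mathop{\textsf{\textbf{d}}}(\gamma_S)$) or maps the block into one block by the coordinatewise rule $(s_1,\ldots,s_k)\mapsto(s_1u_1,\ldots,s_ku_k)$ for fixed $u_1,\ldots,u_k\in S^1$; since each block carries the product topology of $S^k$ and $(S,\tau_S)$ is a semitopological semigroup, $\rho_{\gamma_S}$ is continuous on each block. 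As the blocks are open, this gives continuity of $\rho_{\gamma_S}$ at every non-zero point; continuity at a point mapped to $0$ follows because the block through it maps to $\{0\}$ and is open; and continuity at $0$ follows by checking directly from the multiplication rule that the preimage of a subbasic neighbourhood $\mathscr{I}_\lambda^n(S)\setminus{\Uparrow}S^{(a_1,\ldots,a_k)}_{(b_1,\ldots,b_k)}$ of $0$ contains a set of the same form (here the monoid hypothesis on $S$ is used exactly as in Proposition~\ref{proposition-5.1} and Lemma~\ref{lemma-5.8}).

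\emph{Compactness.} I would apply the Alexander subbase lemma to $\mathscr{P}^{\mathbf{c}}$. Let $\mathscr{W}\subseteq\mathscr{P}^{\mathbf{c}}$ cover $\mathscr{I}_\lambda^n(S)$. Since $0$ lies in no set ${\Uparrow}\left[U(s)\right]^{(a)}_{(b)}$, some $V_0=\mathscr{I}_\lambda^n(S)\setminus{\Uparrow}S^{(a_1,\ldots,a_k)}_{(b_1,\ldots,b_k)}$ belongs to $\mathscr{W}$, and it covers everything except $K:={\Uparrow}S^{(a_1,\ldots,a_k)}_{(b_1,\ldots,b_k)}$; so it remains to cover $K$ by finitely many members of $\mathscr{W}$. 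For $k=n$ the set $K$ is a single compact block (homeomorphic to $S^k$), and we are done. For $k<n$ one uses the decomposition
\begin{equation*}
{\Uparrow}S^{(a_1,\ldots,a_k)}_{(b_1,\ldots,b_k)}=S^{(a_1,\ldots,a_k)}_{(b_1,\ldots,b_k)}\cup\bigcup\left\{{\Uparrow}S^{(a_1,\ldots,a_k,e)}_{(b_1,\ldots,b_k,f)}\colon e\notin\{a_1,\ldots,a_k\},\ f\notin\{b_1,\ldots,b_k\}\right\},
\end{equation*}
in which the block is compact and the remaining pieces have strictly larger rank and are of the same type; one extracts a finite subfamily of $\mathscr{W}$ covering the compact block and then iterates on the uncovered part, which has rank at least $k+1$. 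The step I expect to be the main obstacle is making this recursion terminate uniformly: when $\lambda$ is infinite there are infinitely many higher-rank pieces, and the decisive observation is that any member of $\mathscr{W}$ of the form $\mathscr{I}_\lambda^n(S)\setminus{\Uparrow}S^{(c_1,\ldots,c_l)}_{(d_1,\ldots,d_l)}$ already covers all of $K$ outside at most one of these pieces, so after taking into account the finitely many such members of $\mathscr{W}$ needed to cover $0$ and the blocks encountered, only finitely many pieces survive at each stage and the bounded-rank induction closes. Carrying out this bookkeeping carefully is the heart of the proof; by contrast, parts (i) and (ii) are routine checks against the definition of $\tau_{\mathscr{I}}^{\mathbf{c}}$ and the multiplication table of $\mathscr{I}_\lambda^n(S)$.
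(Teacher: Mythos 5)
Your Hausdorffness check is fine, and your compactness argument is essentially the paper's: Alexander's subbase lemma, peel off one subbasic neighbourhood $\mathscr{I}_\lambda^n(S)\setminus{\Uparrow}S^{(a_1,\ldots,a_k)}_{(b_1,\ldots,b_k)}$ of $0$, use that the blocks carry the product topology of $S^k$ (hence are compact by Tychonoff --- note that citing Theorem~\ref{theorem-5.9} for this is circular, since its hypothesis is the compactness you are proving; the fact must be, and in the paper is, read off directly from the traces of the subbase), exploit that a set ${\Uparrow}\left[U(s)\right]^{(c)}_{(d)}$ covering a point of the block automatically covers everything above it, and recurse on the strictly increasing rank of the surviving up-set, which terminates because the rank is bounded by $n$.

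The genuine gap is in your separate-continuity argument, which rests on the premise that each block $S^{(a_1,\ldots,a_k)}_{(b_1,\ldots,b_k)}$ is $\tau_{\mathscr{I}}^{\mathbf{c}}$-open. For infinite $\lambda$ and $k<n$ this is false: a basic $\tau_{\mathscr{I}}^{\mathbf{c}}$-neighbourhood of a rank-$k$ point $\alpha_S$ has the form ${\Uparrow}\left[U_1(s_1),\ldots,U_k(s_k)\right]^{(a_1,\ldots,a_k)}_{(b_1,\ldots,b_k)}$ minus \emph{finitely many} sets ${\Uparrow}S^{(c_1,\ldots,c_l)}_{(d_1,\ldots,d_l)}$, and each removed set can exclude only those rank-$(k{+}1)$ extensions of $\alpha_S$ whose new pair $(e,f)$ occurs among its finitely many pairs; since $\lambda$ is infinite, every neighbourhood of $\alpha_S$ therefore contains elements of rank $>k$ (similarly, $0$ is not isolated). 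Indeed, if all blocks were open and $0$ isolated the space would be an infinite topological sum and could not be compact, contradicting the theorem itself. Consequently, continuity of $\rho_{\gamma_S}$ restricted to a block says nothing about continuity at points of that block, and your treatment of points mapped to $0$ fails outright: if $\alpha\cdot\beta=0$ because $\mathbf{r}(\alpha)\cap\mathbf{d}(\beta)=\varnothing$, a nearby higher-rank $\alpha'\supseteq\alpha$ may well satisfy $\mathbf{r}(\alpha')\cap\mathbf{d}(\beta)\neq\varnothing$ and $\alpha'\cdot\beta\neq 0$. The missing work is exactly to choose the neighbourhood of $\alpha$ so as to \emph{remove} the finitely many up-sets ${\Uparrow}S^{(a)}_{(b)}$ (or ${\Uparrow}S^{(a_1,\ldots,a_k,a)}_{(b_1,\ldots,b_k,b)}$) that would let such higher-rank perturbations escape the target neighbourhood; this is the content of the paper's cases $(ii)$--$(iv)$ and $(vi)$--$(viii)$, and it is where the hypothesis that only finitely many exclusions are allowed interacts nontrivially with the multiplication. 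Your argument does cover the paper's case $(v)$ (where $\mathbf{r}(\alpha)=\mathbf{d}(\beta)$ and the product stays in the image block), but that is the easy case.
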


\begin{proof}
It is obvious that the topology $\tau_{\mathscr{I}}^\mathbf{c}$ is Hausdorff.

By the Alexander Subbase Theorem (see \cite[3.12.2]{Engelking-1989}) it is sufficient to show that every open cover  of $\mathscr{I}_\lambda^n(S)$ which consists of elements of the subbase $\mathscr{P}^\mathbf{c}$ has a finite subcover.

We shall show that the space $\left(\mathscr{I}_\lambda^n(S),\tau_{\mathscr{I}}^\mathbf{c}\right)$ is compact by induction. In the case when $n=1$, Corollary~13 from \cite{Gutik-Pavlyk-2013a} implies that the space $\left(\mathscr{I}_\lambda^1(S),\tau_{\mathscr{I}}^\mathbf{c}\right)$ is compact. Next we shall show the step of induction: $\left(\mathscr{I}_\lambda^{k-1}(S),\tau_{\mathscr{I}}^\mathbf{c}\right)$ is compact implies $\left(\mathscr{I}_\lambda^k(S),\tau_{\mathscr{I}}^\mathbf{c}\right)$ is compact, too, for $k=2,\ldots,n$. Without loss of generality we my assume that $k=n$.

Let $\mathscr{U}$ be an arbitrary open cover of $\left(\mathscr{I}_\lambda^n(S),\tau_{\mathscr{I}}^\mathbf{c}\right)$ which consists of elements of the subbase $\mathscr{P}^\mathbf{c}$. The assumption of induction implies that there exists a finite subfamily $\mathscr{U}_{n-1}$ of $\mathscr{U}$ which is a subcover of $\mathscr{I}_\lambda^{n-1}(S)$. Fix an arbitrary element $V_0=\mathscr{I}_\lambda^n(S)\setminus {\Uparrow}S^{(a_1,\ldots,a_p)}_{(b_1,\ldots,b_p)}\in\mathscr{U}_{n-1}$ which contains the zero $0$ of $\mathscr{I}_\lambda^n(S)$. Then $p\in\{1,\ldots,n\}$.

We observe that an arbitrary element $U_0$ of  the family $\left\{\mathscr{P}_k^\mathbf{c}(0)\colon k=1,\ldots,n\right\}$ contains the set $S^{(a_1,\ldots,a_p)}_{(b_1,\ldots,b_p)}$ if and only if $U_0\cap S^{(a_1,\ldots,a_p)}_{(b_1,\ldots,b_p)}\neq\varnothing$. This implies that only one of the following conditions holds:
\begin{itemize}
  \item[(1)] there does not exist an element of $\mathscr{U}_{n-1}$ from the family $\left\{\mathscr{P}_k^\mathbf{c}(0)\colon k=1,\ldots,n\right\}$ which contains the set $S^{(a_1,\ldots,a_p)}_{(b_1,\ldots,b_p)}$;
  \item[(2)]  there exists $W_0\in \mathscr{U}_{n-1}\cap\left\{\mathscr{P}_k^\mathbf{c}(0)\colon k=1,\ldots,n\right\}$ such that  $S^{(a_1,\ldots,a_p)}_{(b_1,\ldots,b_p)}\subseteq W_0$.
\end{itemize}

Suppose that condition $(1)$ holds. First we consider the case when $p<n$. By Theorem~\ref{theorem-5.9}, $S^{(a_1,\ldots,a_p)}_{(b_1,\ldots,b_p)}$ is compact, and hence there exists finitely many elements ${\Uparrow}\left[U(s_1)\right]^{(c_1)}_{(d_1)},\ldots,{\Uparrow}\left[U(s_m)\right]^{(c_m)}_{(d_m)}$ in $\mathscr{U}_{n-1}\cap \mathscr{P}^\mathbf{c}\setminus\left\{\mathscr{P}_k^\mathbf{c}(0)\colon k=1,\ldots,n\right\}$ such that $S^{(a_1,\ldots,a_p)}_{(b_1,\ldots,b_p)}\subseteq {\Uparrow}\left[U(s_1)\right]^{(c_1)}_{(d_1)}\cup\cdots\cup{\Uparrow}\left[U(s_m)\right]^{(c_m)}_{(d_m)}$. It is obvious that $\left\{U_0,{\Uparrow}\left[U(s_1)\right]^{(c_1)}_{(d_1)},\ldots,{\Uparrow}\left[U(s_m)\right]^{(c_m)}_{(d_m)}\right\}$ is a finite cover of $\left(\mathscr{I}_\lambda^n(S),\tau_{\mathscr{I}}^\mathbf{c}\right)$.

Next, we consider case $p=n$. We identify the set $S^{(a_1,\ldots,a_n)}_{(b_1,\ldots,b_n)}$ and the power $S^n$ by the mapping
\begin{equation}\label{eq-5.1}
  \mathfrak{H}\colon S^{(a_1,\ldots,a_n)}_{(b_1,\ldots,b_n)}\to S^n, \;
  \left(
\begin{array}{ccc}
a_1 & \ldots & a_n \\
s_1 & \ldots & s_n \\
b_1 & \ldots & b_n \\
\end{array}
\right)\mapsto (s_1,\ldots,s_n).
\end{equation}
The semigroup operation of $\mathscr{I}_\lambda^n(S)$ implies that ${\Uparrow}\left[U(s)\right]^{(c)}_{(d)}\cap S^{(a_1,\ldots,a_n)}_{(b_1,\ldots,b_n)}\neq\varnothing$ if and only if $c=a_i$ and $d=b_i$ for some $i=1,\ldots,n$. Then by \eqref{eq-5.1} for every $i=1,\ldots,n$ we have that
\begin{equation}\label{eq-5.2}
  \left({\Uparrow}\left[U(s)\right]^{(a_i)}_{(b_i)}\cap S^{(a_1,\ldots,a_n)}_{(b_1,\ldots,b_n)}\right)\mathfrak{H}=S\times\cdots\times \underbrace{U(s)}_{i-\hbox{th}}\times\cdots\times S\subseteq S^n.
\end{equation}
Then the subbase $\mathscr{P}^\mathbf{c}$ on $\mathscr{I}_\lambda^n(S)$ and the map \eqref{eq-5.1} determine the product topology on $S^n$ from the space $S$, and hence the space $S^n$ is compact.

Suppose that $S^{(a_1,\ldots,a_n)}_{(b_1,\ldots,b_n)}$ is not compact. Then $S^{(a_1,\ldots,a_n)}_{(b_1,\ldots,b_n)}$ has a cover $\mathscr{W}$ which consists of the open sets of the form ${\Uparrow}\left[U(s)\right]^{(c)}_{(d)}$ and $\mathscr{W}$ does not have a finite subcover. Then the cover $\mathscr{W}_{S^n}$ of $S^n$ which determines by formula \eqref{eq-5.2} from the family $\mathscr{W}$ has no finite subcover, too. This contradicts the compactness of $S^n$.

Hence in case $(1)$ the cover $\mathscr{U}$ of $\mathscr{I}_\lambda^n(S)$ has a finite subcover.

Suppose that condition $(2)$ holds. Then $W_0=\mathscr{I}_\lambda^n(S)\setminus {\Uparrow}S^{(c_1,\ldots,c_q)}_{(d_1,\ldots,d_q)}$ with $q\leqslant n$. If ${\Uparrow}S^{(c_1,\ldots,c_q)}_{(d_1,\ldots,d_q)}\cap {\Uparrow}S^{(a_1,\ldots,a_p)}_{(b_1,\ldots,b_p)}=\varnothing$ then $\{V_0,W_0\}$ is a cover of $\mathscr{I}_\lambda^n(S)$. In the other case there exists a smallest positive integer $p_1$ such that $\max\{p+1,q\}\leqslant p_1\leqslant n$ and two ordered $p_1$-collections of distinct elements $(e_1,\ldots,e_{p_1})$ and $(f_1,\ldots,f_{p_1})$ of the power $\lambda^{p_1}$ such that
${\Uparrow}S^{(c_1,\ldots,c_q)}_{(d_1,\ldots,d_q)}\cap {\Uparrow}S^{(a_1,\ldots,a_p)}_{(b_1,\ldots,b_p)}= {\Uparrow}S^{(e_1,\ldots,e_{p_1})}_{(f_1,\ldots,f_{p_1})}$. Then for the open set $U_1=U_0\cup W_0=\mathscr{I}_\lambda^n(S)\setminus {\Uparrow}S^{(e_1,\ldots,e_{p_1})}_{(f_1,\ldots,f_{p_1})}$ either condition $(1)$ or condition $(2)$ holds.

Since $p+1\leqslant p_1\leqslant n$, we repeating finitely many items the above procedure we get that the space  $\left(\mathscr{I}_\lambda^n(S),\tau_{\mathscr{I}}^\mathbf{c}\right)$ is compact.

Next we shall show that the topology $\tau_{\mathscr{I}}^\mathbf{c}$ is shift-continuous on $\left(\mathscr{I}_\lambda^n(S),\tau_{\mathscr{I}}^\mathbf{c}\right)$. We consider all possible cases.

$(i)$ $0\cdot 0=0$. Then for any open neighbourhood $U_0$ of zero in $\left(\mathscr{I}_\lambda^n(S),\tau_{\mathscr{I}}^\mathbf{c}\right)$ we have that
\begin{equation*}
U_0\cdot 0=0\cdot U_0= \{0\}\subseteq U_0.
\end{equation*}

$(ii)$ $\alpha\cdot 0=0$. Then for any open neighbourhoods $U_0$ and $U_\alpha$ of zero and $\alpha$ in $\left(\mathscr{I}_\lambda^n(S),\tau_{\mathscr{I}}^\mathbf{c}\right)$, respectively, we have that
\begin{equation*}
U_\alpha\cdot 0=\{0\}\subseteq U_0.
\end{equation*}
Let $W_0=\mathscr{I}_\lambda^n(S)\setminus\left({\Uparrow}S^{(a^1_1,\ldots,a^1_{p_1})}_{(b^1_1,\ldots,b^1_{p_1})}\cup \cdots\cup {\Uparrow}S^{(a^k_1,\ldots,a^k_{p_k})}_{(b^k_1,\ldots,b^k_{p_k})}\right)$ be an arbitrary basic neighbourhood of $0$ in $\left(\mathscr{I}_\lambda^n(S),\tau_{\mathscr{I}}^\mathbf{c}\right)$. Without loss of generality we may assume that $p_1,\ldots,p_k\leqslant |\mathbf{d}(\alpha)|$. Put
\begin{equation*}
  \mathbf{B}=\left\{S^{(a)}_{(b)}\colon a\in \mathbf{d}(\alpha)\quad \hbox{and} \quad b\in \left\{b^1_1,\ldots,b^1_{p_1},\ldots,b^k_1,\ldots,b^k_{p_k}\right\}\right\}.
\end{equation*}
Then the family $\mathbf{B}$ is finite and $\alpha\cdot U_0\subseteq W_0$ for $U_0=\mathscr{I}_\lambda^n(S)\setminus\bigcup_{S^{(a)}_{(b)}\in \mathbf{B}}{\Uparrow}S^{(a)}_{(b)}$.

$(iii)$ $0\cdot\alpha=0$. Then for any open neighbourhoods $U_0$ and $U_\alpha$ of zero and $\alpha$ in $\left(\mathscr{I}_\lambda^n(S),\tau_{\mathscr{I}}^\mathbf{c}\right)$, respectively, we have that
\begin{equation*}
0\cdot U_\alpha=\{0\}\subseteq U_0.
\end{equation*}
Let $W_0=\mathscr{I}_\lambda^n(S)\setminus\left({\Uparrow}S^{(a^1_1,\ldots,a^1_{p_1})}_{(b^1_1,\ldots,b^1_{p_1})}\cup \cdots \cup {\Uparrow}S^{(a^k_1,\ldots,a^k_{p_k})}_{(b^k_1,\ldots,b^k_{p_k})}\right)$ be an arbitrary basic neighbourhood of $0$ in $\left(\mathscr{I}_\lambda^n(S),\tau_{\mathscr{I}}^\mathbf{c}\right)$. Without loss of generality we may assume that $p_1,\ldots,p_k\leqslant |\mathbf{d}(\alpha)|$. Put
\begin{equation*}
  \mathbf{B}=\left\{S^{(a)}_{(b)}\colon b\in \mathbf{r}(\alpha)\quad \hbox{and} \quad a\in \left\{a^1_1,\ldots,a^1_{p_1},\ldots,a^k_1,\ldots,a^k_{p_k}\right\}\right\}.
\end{equation*}
Then the family $\mathbf{B}$ is finite and $U_0\cdot\alpha\subseteq W_0$ for $U_0=\mathscr{I}_\lambda^n(S)\setminus\bigcup_{S^{(a)}_{(b)}\in \mathbf{B}}{\Uparrow}S^{(a)}_{(b)}$.

$(iv)$ $\alpha\cdot\beta=0$. Fix an arbitrary open neighbourhood $W_0$ of $0$ in $\left(\mathscr{I}_\lambda^n(S),\tau_{\mathscr{I}}^\mathbf{c}\right)$. Without loss of generality we may assume that $W_0=\mathscr{I}_\lambda^n(S)\setminus\left({\Uparrow}S^{(a_1)}_{(b_1)}\cup \cdots \cup {\Uparrow}S^{(a_k)}_{(b_k)}\right)$. Since $\alpha\cdot\beta=0$ we have that $\mathbf{r}(\alpha)\cap \mathbf{d}(\beta)=\varnothing$.  We put
\begin{equation*}
  \mathbf{B}_\alpha=\left\{S^{(a)}_{(b)}\colon a\in\{a_1,\ldots, a_k\}, b\in\mathbf{d}(\beta),  \hbox{~and~} \alpha\notin {\Uparrow}S^{(a)}_{(b)}\right\}
\end{equation*}
and
\begin{equation*}
  \mathbf{B}_\beta=\left\{S^{(a)}_{(b)} \colon b\in\{b_1,\ldots,b_k\}, a\in\mathbf{r}(\alpha), \hbox{~and~}  \beta\notin {\Uparrow}S^{(a)}_{(b)}\right\}.
\end{equation*}
Let $S^{(a_1,\ldots,a_k)}_{(b_1,\ldots,b_k)}$ and $S^{(c_1,\ldots,c_p)}_{(d_1,\ldots,d_p)}$, $1\leqslant k,p\leqslant n$, such that $\alpha\in S^{(a_1,\ldots,a_k)}_{(b_1,\ldots,b_k)}$ and $\beta\in S^{(c_1,\ldots,c_p)}_{(d_1,\ldots,d_p)}$.
Then the families $\mathbf{B}_\alpha$ and $\mathbf{B}_\beta$ are finite, and hence by Remark~\ref{remark-5.13}(2) the sets $V_\alpha=S^{(a_1,\ldots,a_k)}_{(b_1,\ldots,b_k)}\setminus\bigcup_{S^{(a)}_{(b)}\in \mathbf{B}_\alpha}{\Uparrow}S^{(a)}_{(b)}$ and $V_\beta=S^{(c_1,\ldots,c_p)}_{(d_1,\ldots,d_p)}\setminus\bigcup_{S^{(a)}_{(b)}\in \mathbf{B}_\beta}{\Uparrow}S^{(a)}_{(b)}$ are open neighbourhoods of the points $\alpha$ and $\beta$ in $\left(\mathscr{I}_\lambda^n(S),\tau_{\mathscr{I}}^\mathbf{c}\right)$, respectively, such that
\begin{equation*}
  V_\alpha\cdot \beta\subseteq W_0 \qquad \hbox{and} \qquad\alpha\cdot V_\beta\subseteq W_0.
\end{equation*}

$(v)$ $\alpha\cdot\beta=\gamma\neq0$ and $\mathbf{r}(\alpha)=\mathbf{d}(\beta)$. Without loss of generality we may assume that
$\alpha=
\left(
\begin{array}{ccc}
a_1 & \ldots & a_k \\
s_1 & \ldots & s_k \\
b_1 & \ldots & b_k \\
\end{array}
\right)
$
and
$\beta=
\left(
\begin{array}{ccc}
b_1 & \ldots & b_k \\
t_1 & \ldots & t_k \\
c_1 & \ldots & c_k \\
\end{array}
\right)$, and hence we have that
$\gamma=
\left(
\begin{array}{ccc}
a_1    & \ldots & a_k    \\
s_1t_1 & \ldots & s_kt_k \\
c_1    & \ldots & c_k    \\
\end{array}
\right)
$. Then for any open neighbourhood $U_\gamma={\Uparrow}\left[U_1(s_1t_1),\ldots,U_k(s_kt_k)\right]^{(a_1,\ldots,a_k)}_{(c_1,\ldots,c_k)}\setminus \left({\Uparrow}S^{(a^1_1,\ldots,a^1_{l_1})}_{(b^1_1,\ldots,b^1_{l_1})}\cup\cdots\cup {\Uparrow}S^{(a^p_1,\ldots,a^p_{l_p})}_{(b^p_1,\ldots,b^p_{l_p})}\right)$ of $\gamma$ in $\left(\mathscr{I}_\lambda^n(S),\tau_{\mathscr{I}}^\mathbf{c}\right)$ we have that
\begin{equation*}
  {\Uparrow}\left[V_1(s_1),\ldots,V_k(s_k)\right]^{(a_1,\ldots,a_k)}_{(b_1,\ldots,b_k)}\cdot \beta \subseteq {\Uparrow}\left[U_1(s_1t_1),\ldots,U_k(s_kt_k)\right]^{(a_1,\ldots,a_k)}_{(c_1,\ldots,c_k)}\cap S^{(a_1,\ldots,a_k)}_{(c_1,\ldots,c_k)} \subseteq U_\gamma
\end{equation*}
and
\begin{equation*}
  \alpha\cdot {\Uparrow}\left[V_1(t_1),\ldots,V_k(t_k)\right]^{(b_1,\ldots,b_k)}_{(c_1,\ldots,c_k)}\subseteq {\Uparrow}\left[U_1(s_1t_1),\ldots,U_k(s_kt_k)\right]^{(a_1,\ldots,a_k)}_{(c_1,\ldots,c_k)}\cap S^{(a_1,\ldots,a_k)}_{(c_1,\ldots,c_k)} \subseteq U_\gamma,
\end{equation*}
where $V_1(s_1),\ldots,V_k(s_k), V_1(t_1),\ldots,V_k(t_k)$ are open neighbourhoods of the points $s_1, \ldots, s_k, t_1, \ldots, t_k$ in $(S,\tau_S)$, respectively, such that
\begin{equation*}
  V_1(s_1)\cdot t_1\subseteq U_1(s_1t_1),\ldots,V_k(s_k)\cdot t_k\subseteq U_k(s_kt_k) \qquad \hbox{and} \qquad
  s_1\cdot V_1(t_1)\subseteq U_1(s_1t_1),\ldots,s_k\cdot V_k(t_k)\subseteq U_k(s_kt_k).
\end{equation*}

$(vi)$ $\alpha\cdot\beta=\gamma\neq0$ and $\mathbf{r}(\alpha)\subsetneqq\mathbf{d}(\beta)$. Without loss of generality we may assume that
$\alpha=
\left(
\begin{array}{ccc}
a_1 & \ldots & a_k \\
s_1 & \ldots & s_k \\
b_1 & \ldots & b_k \\
\end{array}
\right)
$
and
$\beta=
\left(
\begin{array}{cccccc}
b_1 & \ldots & b_k & b_{k+1} & \ldots & b_{k+j}\\
t_1 & \ldots & t_k & t_{k+1} & \ldots & t_{k+j}\\
c_1 & \ldots & c_k & c_{k+1} & \ldots & c_{k+j}\\
\end{array}
\right)$, where $1\leqslant j\leqslant n-k,$ and hence we have that
$\gamma=
\left(
\begin{array}{ccc}
a_1    & \ldots & a_k    \\
s_1t_1 & \ldots & s_kt_k \\
c_1    & \ldots & c_k    \\
\end{array}
\right)
$. Then for any open neighbourhood $U_\gamma={\Uparrow}\left[U_1(s_1t_1),\ldots,U_k(s_kt_k)\right]^{(a_1,\ldots,a_k)}_{(c_1,\ldots,c_k)}\setminus \left({\Uparrow}S^{(a^1_1,\ldots,a^1_{l_1})}_{(b^1_1,\ldots,b^1_{l_1})}\cup\cdots\cup {\Uparrow}S^{(a^p_1,\ldots,a^p_{l_p})}_{(b^p_1,\ldots,b^p_{l_p})}\right)$ of the point $\gamma$ in $\left(\mathscr{I}_\lambda^n(S),\tau_{\mathscr{I}}^\mathbf{c}\right)$ we have that
\begin{equation*}
  \alpha\cdot {\Uparrow}\left[V_1(t_1),\ldots,V_k(t_k)\right]^{(b_1,\ldots,b_k)}_{(c_1,\ldots,c_k)}\subseteq {\Uparrow}\left[U_1(s_1t_1),\ldots,U_k(s_kt_k)\right]^{(a_1,\ldots,a_k)}_{(c_1,\ldots,c_k)}\cap S^{(a_1,\ldots,a_k)}_{(c_1,\ldots,c_k)} \subseteq U_\gamma,
\end{equation*}
where $V_1(t_1),\ldots,V_k(t_k)$ are open neighbourhoods of the points $t_1, \ldots, t_k$ in $(S,\tau_S)$, respectively, such that
\begin{equation*}
  s_1\cdot V_1(t_1)\subseteq U_1(s_1t_1),\ldots,s_k\cdot V_k(t_k)\subseteq U_k(s_kt_k).
\end{equation*}

Fix an arbitrary open neighbourhood $U_\gamma$ of the point $\gamma$ in $\left(\mathscr{I}_\lambda^n(S),\tau_{\mathscr{I}}^\mathbf{c}\right)$. Then Lemma~\ref{lemma-5.8} implies that without loss of generality we may assume that
\begin{equation*}
 U_\gamma={\Uparrow}\left[U_1(s_1t_1),\ldots,U_k(s_kt_k)\right]^{(a_1,\ldots,a_k)}_{(c_1,\ldots,c_k)}\setminus \left({\Uparrow}S^{(a_1,\ldots,a_k,x_1)}_{(c_1,\ldots,c_k,y_1)}\cup\cdots\cup {\Uparrow}S^{(a_1,\ldots,a_k,x_p)}_{(c_1,\ldots,c_k,y_p)}\right)
\end{equation*}
for some $x_1,\ldots,x_p\in\lambda\setminus \{a_1,\ldots,a_k\}$ and $y_1,\ldots,y_p\in\lambda\setminus \{c_1,\ldots,c_k\}$. We put
\begin{equation*}
  \mathbf{B}_\alpha=\left\{S^{(a_1,\ldots,a_k,a)}_{(b_1,\ldots,b_k,b)}\colon a\in\left\{x_1,\ldots,x_p\right\} \quad \hbox{and} \quad b\in\left\{b_{k+1},\ldots,b_{k+j}\right\}\right\}.
\end{equation*}
It is obvious that the family $\mathbf{B}_\alpha$ is finite. Then $V_\alpha\cdot \beta\subseteq U_\gamma$ for
\begin{equation*}
  V_\alpha={\Uparrow}\left[V_1(s_1),\ldots,V_k(s_k)\right]^{(a_1,\ldots,a_k)}_{(b_1,\ldots,b_k)}\setminus \bigcup_{S^{(a_1,\ldots,a_k,a)}_{(b_1,\ldots,b_k,b)}\in \mathbf{B}_\alpha}{{\Uparrow}}S^{(a_1,\ldots,a_k,a)}_{(b_1,\ldots,b_k,b)},
\end{equation*}
where $V_1(s_1),\ldots,V_k(s_k)$ are open neighbourhoods of the points $s_1, \ldots, s_k$ in $(S,\tau_S)$, respectively, such that
\begin{equation*}
  V_1(s_1)\cdot t_1\subseteq U_1(s_1t_1),\ldots,V_k(s_k)\cdot t_k\subseteq U_k(s_kt_k).
\end{equation*}

$(vii)$ $\alpha\cdot\beta=\gamma\neq0$ and $\mathbf{d}(\beta)\subsetneqq\mathbf{r}(\alpha)$. In this case the proof of separate continuity of the semigroup operation is similar to case $(vi)$.

$(viii)$ $\alpha\cdot\beta=\gamma\neq0$, $\mathbf{d}(\gamma)\subsetneqq\mathbf{d}(\alpha)$ and $\mathbf{r}(\gamma)\subsetneqq\mathbf{r}(\beta)$. Without loss of generality we may assume that
$\alpha=
\left(
\begin{array}{cccccc}
a_1 & \ldots & a_k & a_{k+11} & \ldots & a_{k+m}\\
s_1 & \ldots & s_k & s_{k+11} & \ldots & s_{k+m}\\
b_1 & \ldots & b_k & b_{k+11} & \ldots & b_{k+m}\\
\end{array}
\right)
$,
$\beta=
\left(
\begin{array}{cccccc}
b_1 & \ldots & b_k & b_{k+1} & \ldots & b_{k+j}\\
t_1 & \ldots & t_k & t_{k+1} & \ldots & t_{k+j}\\
c_1 & \ldots & c_k & c_{k+1} & \ldots & c_{k+j}\\
\end{array}
\right)$ and
$\gamma=
\left(
\begin{array}{ccc}
a_1    & \ldots & a_k    \\
s_1t_1 & \ldots & s_kt_k \\
c_1    & \ldots & c_k    \\
\end{array}
\right)
$, where $1\leqslant j,m\leqslant n-k$. We put
$\varepsilon=
\left(
\begin{array}{ccc}
b_1 & \ldots & b_k \\
1_S & \ldots & 1_S \\
b_1 & \ldots & b_k \\
\end{array}
\right)
$, where $1_S$ is the unit element of $S$. It is obvious that $\gamma=\alpha\cdot\varepsilon\cdot\beta$. Hence, in this case the separate continuity of the semigroup operation at the point $\alpha\cdot\beta$ in $\left(\mathscr{I}_\lambda^n(S),\tau_{\mathscr{I}}^\mathbf{c}\right)$ follows from cases  $(vi)$ and $(vii)$.

The previous statements of this section imply that $\tau_{\mathscr{I}}^\mathbf{c}\subseteq \tau_{\mathscr{I}}$ for any compact shift-continuous Hausdorff topology $\tau_{\mathscr{I}}$ on $\mathscr{I}_\lambda^n(S)$, and hence $\tau_{\mathscr{I}}^\mathbf{c}$ is the unique requested compact shift-continuous Hausdorff topology on $\mathscr{I}_\lambda^n(S)$.
\end{proof}

\begin{corollary}\label{corollary-5.15}
If $(S,\tau_S)$ is a compact Hausdorff semitopological inverse monoid with continuous inversion then $\left(\mathscr{I}_\lambda^n(S),\tau_{\mathscr{I}}^\mathbf{c}\right)$ is a compact Hausdorff semitopological inverse semigroup with continuous inversion.
\end{corollary}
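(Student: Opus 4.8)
The plan is to deduce the statement from Theorem~\ref{theorem-5.14}, Proposition~\ref{proposition-2.5}$(iii)$ and Proposition~\ref{proposition-2.3}, so that the only point requiring a genuinely new argument is the continuity of the inversion on $\left(\mathscr{I}_\lambda^n(S),\tau_{\mathscr{I}}^\mathbf{c}\right)$. Since a compact Hausdorff semitopological inverse monoid with continuous inversion is in particular a compact Hausdorff semitopological monoid, Theorem~\ref{theorem-5.14} already gives that $\left(\mathscr{I}_\lambda^n(S),\tau_{\mathscr{I}}^\mathbf{c}\right)$ is a compact Hausdorff semitopological semigroup, and Proposition~\ref{proposition-2.5}$(iii)$ gives that it is inverse. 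By Proposition~\ref{proposition-2.3} (together with uniqueness of inverses in an inverse semigroup), the inverse of a non-zero element $\alpha_S=\left(\begin{smallmatrix} a_1 & \cdots & a_k \\ s_1 & \cdots & s_k \\ b_1 & \cdots & b_k \end{smallmatrix}\right)$ of $\mathscr{I}_\lambda^n(S)$ is $\alpha_S^{-1}=\left(\begin{smallmatrix} b_1 & \cdots & b_k \\ s_1^{-1} & \cdots & s_k^{-1} \\ a_1 & \cdots & a_k \end{smallmatrix}\right)$, where $s\mapsto s^{-1}$ is the inversion of $S$, while $0^{-1}=0$; it remains to prove that $\alpha_S\mapsto\alpha_S^{-1}$ is $\tau_{\mathscr{I}}^\mathbf{c}$-continuous. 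As this map is an involution, its continuity automatically makes it a homeomorphism.

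The main tool will be the following pair of identities, each immediate from the explicit description of the inversion and from Construction~\ref{construction-4.8}: writing $X^{-1}=\{\gamma^{-1}\colon\gamma\in X\}$ for $X\subseteq\mathscr{I}_\lambda^n(S)$ and $U^{-1}=\{s^{-1}\colon s\in U\}$ for $U\subseteq S$, for any ordered collections of $k$ distinct elements $(a_1,\ldots,a_k)$, $(b_1,\ldots,b_k)$ of $\lambda^k$ and any open subsets $U_1,\ldots,U_k$ of $S$ one has $\left({\Uparrow}S^{(a_1,\ldots,a_k)}_{(b_1,\ldots,b_k)}\right)^{-1}={\Uparrow}S^{(b_1,\ldots,b_k)}_{(a_1,\ldots,a_k)}$ and $\left({\Uparrow}\left[U_1,\ldots,U_k\right]^{(a_1,\ldots,a_k)}_{(b_1,\ldots,b_k)}\right)^{-1}={\Uparrow}\left[U_1^{-1},\ldots,U_k^{-1}\right]^{(b_1,\ldots,b_k)}_{(a_1,\ldots,a_k)}$. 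In words, the inversion carries each subbasic closed set ${\Uparrow}S^{(a_1,\ldots,a_k)}_{(b_1,\ldots,b_k)}$ and each generating set ${\Uparrow}\left[U_1,\ldots,U_k\right]^{(a_1,\ldots,a_k)}_{(b_1,\ldots,b_k)}$ onto the set obtained by interchanging the upper and lower collections (and replacing each $U_i$ by its image under the inversion of $S$); the verification uses only that the inversion of $S$ is a bijection of $S$ and that the idempotents $\varepsilon_1[1_S],\varepsilon_2[1_S]$ occurring in $\mathfrak{f}^{(a_1,\ldots,a_k)}_{(b_1,\ldots,b_k)}$ are self-inverse.

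Granting these identities, I would check continuity of $\alpha_S\mapsto\alpha_S^{-1}$ at every point. At the zero: a basic neighbourhood of $0$ in $\tau_{\mathscr{I}}^\mathbf{c}$ is $\mathscr{I}_\lambda^n(S)\setminus\bigl({\Uparrow}S^{(a^1_1,\ldots,a^1_{p_1})}_{(b^1_1,\ldots,b^1_{p_1})}\cup\cdots\cup{\Uparrow}S^{(a^l_1,\ldots,a^l_{p_l})}_{(b^l_1,\ldots,b^l_{p_l})}\bigr)$, and since $0^{-1}=0$ its preimage under the inversion is, by the first identity, the basic neighbourhood of $0$ obtained by interchanging the two collections in each summand. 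At a non-zero point $\alpha_S=\left(\begin{smallmatrix} a_1 & \cdots & a_k \\ s_1 & \cdots & s_k \\ b_1 & \cdots & b_k \end{smallmatrix}\right)$, by Lemma~\ref{lemma-5.8} and Remark~\ref{remark-5.13}(2) a basic neighbourhood of $\alpha_S^{-1}$ may be taken of the form ${\Uparrow}\left[U_1(s_1^{-1}),\ldots,U_k(s_k^{-1})\right]^{(b_1,\ldots,b_k)}_{(a_1,\ldots,a_k)}$ with finitely many sets ${\Uparrow}S^{(\cdots)}_{(\cdots)}$ not containing $\alpha_S^{-1}$ deleted, where each $U_i(s_i^{-1})$ is an open neighbourhood of $s_i^{-1}$ in $(S,\tau_S)$. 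Using continuity of the inversion of $S$, choose open neighbourhoods $V_i(s_i)$ of $s_i$ with $V_i(s_i)^{-1}\subseteq U_i(s_i^{-1})$; then, applying the two identities, the set ${\Uparrow}\left[V_1(s_1),\ldots,V_k(s_k)\right]^{(a_1,\ldots,a_k)}_{(b_1,\ldots,b_k)}$ with the correspondingly swapped finitely many ${\Uparrow}S$-sets deleted is an open neighbourhood of $\alpha_S$ whose image under the inversion lies in the given neighbourhood of $\alpha_S^{-1}$. Hence the inversion is continuous, therefore a homeomorphism, so $\left(\mathscr{I}_\lambda^n(S),\tau_{\mathscr{I}}^\mathbf{c}\right)$ is a compact Hausdorff semitopological inverse semigroup with continuous inversion; the uniqueness of $\tau_{\mathscr{I}}^\mathbf{c}$ among compact shift-continuous Hausdorff topologies on $\mathscr{I}_\lambda^n(S)$ is already contained in Theorem~\ref{theorem-5.14}. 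The only mildly delicate point is the bookkeeping of the permutation ambiguity in the symbols $S^{(a_1,\ldots,a_k)}_{(b_1,\ldots,b_k)}$ and of the finitely many deleted ${\Uparrow}S$-sets; this is routine and presents no real obstacle.
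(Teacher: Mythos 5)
Your proposal is correct and follows essentially the same route as the paper: compactness and the Hausdorff semitopological structure come from Theorem~\ref{theorem-5.14}, the inverse-semigroup structure and the explicit formula $\alpha_S^{-1}$ come from Propositions~\ref{proposition-2.5}$(iii)$ and~\ref{proposition-2.3}, and continuity of the inversion is checked at zero by swapping the upper and lower index collections in the subbasic sets ${\Uparrow}S^{(\cdots)}_{(\cdots)}$ and at non-zero points by using the continuity of the inversion of $S$ to choose $V_i(s_i)$ with $V_i(s_i)^{-1}\subseteq U_i(s_i^{-1})$. The only difference is that you spell out the preliminary reductions and the two inversion identities more explicitly than the paper does, which is harmless.
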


\begin{proof}
Since $W_0^{-1}=\mathscr{I}_\lambda^n(S)\setminus\left({\Uparrow}S_{(a^1_1,\ldots,a^1_{p_1})}^{(b^1_1,\ldots,b^1_{p_1})}\cup \cdots\cup {\Uparrow}S_{(a^k_1,\ldots,a^k_{p_k})}^{(b^k_1,\ldots,b^k_{p_k})}\right)$ for an arbitrary basic neighbourhood $W_0=\mathscr{I}_\lambda^n(S)\setminus\left({\Uparrow}S^{(a^1_1,\ldots,a^1_{p_1})}_{(b^1_1,\ldots,b^1_{p_1})}\cup \cdots\cup {\Uparrow}S^{(a^k_1,\ldots,a^k_{p_k})}_{(b^k_1,\ldots,b^k_{p_k})}\right)$ of zero, inversion is continuous at zero in $\left(\mathscr{I}_\lambda^n(S),\tau_{\mathscr{I}}^\mathbf{c}\right)$.

Also, for an arbitrary element $\alpha=
\left(
\begin{array}{ccc}
a_1 & \ldots & a_k \\
s_1 & \ldots & s_k \\
b_1 & \ldots & b_k \\
\end{array}
\right)
$
of $\mathscr{I}_\lambda^n(S)$ and any its open neighbourhood $V_\alpha={\Uparrow}\left[V_1(s_1),\ldots,V_k(s_k)\right]^{(a_1,\ldots,a_k)}_{(b_1,\ldots,b_k)}\setminus \left({\Uparrow}S^{(a^1_1,\ldots,a^1_{l_1})}_{(b^1_1,\ldots,b^1_{l_1})}\cup\cdots\cup {\Uparrow}S^{(a^p_1,\ldots,a^p_{l_p})}_{(b^p_1,\ldots,b^p_{l_p})}\right)$ we have that $\left(V_\alpha\right)^{-1}\subseteq U_{\alpha^{-1}}$ for the neighbourhood $U_{\alpha^{-1}}={\Uparrow}\left[U_1(s^{-1}_1),\ldots,V_k(s^{-1}_k)\right]_{(a_1,\ldots,a_k)}^{(b_1,\ldots,b_k)}\setminus$ of $\alpha^{-1}$ in $\left(\mathscr{I}_\lambda^n(S),\tau_{\mathscr{I}}^\mathbf{c}\right)$ with
\begin{equation*}
  \left(V_1(s_1)\right)^{-1}\subseteq U_1(s^{-1}_1), \ldots , \left(V_k(s_k)\right)^{-1}\subseteq U_k(s^{-1}_k).
\end{equation*}
This completes the proof of the corollary.
\end{proof}

\end{document}